\documentclass{amsart} %texlab: ignore

\usepackage{
  bm,
  bbm,
  color,
  a4wide,
  amssymb,
  amsmath,
  latexsym,
  calligra,
  mathrsfs,
  stmaryrd,
  hyperref,
  graphicx,
  mathtools,
  enumitem,
  tikz,
  tikz-cd
}

\usepackage[2cell,arrow,curve,matrix]{xy}

\newcommand{\p}{\mathfrak{p}}
\newcommand{\q}{\mathfrak{q}}

\newcommand{\fa}{\mathfrak{a}}
\newcommand{\fb}{\mathfrak{b}}
\newcommand{\fm}{\mathfrak{m}}

\newcommand{\bo}{\mathbf{o}}

\newcommand{\cF}{\mathcal F}

\newcommand{\cN}{\mathcal N}

\newcommand{\cP}{\mathcal P}

\newcommand{\N}{\mathbb N}
\newcommand{\Z}{\mathbb Z}
\newcommand{\Q}{\mathbb Q}

\newcommand{\C}{\mathbb C}

\DeclareMathOperator*{\im}    {\mathop{\sf Im}\nolimits}

\renewcommand{\ker}          {\mathsf{Ker}}

\newcommand{\xto}[1]{\xrightarrow{#1}}

\newcommand{\Cl}{\mathrm{Cl}}

\newcommand{\qtext}[1]{\quad\text{#1}\quad}

\let\scong\cong
\renewcommand{\cong}{\;\scong\;}
\newcommand{\bigast}{\mathop{\Large\ast}}

\newtheorem{Core}{Definition}[section]
\newtheorem{Aux}{Remark}[section]

\newtheorem{De}[Core]{Definition}
\newtheorem{Th}[Core]{Theorem}
\newtheorem{Pro}[Core]{Proposition}
\newtheorem{Le}[Core]{Lemma}
\newtheorem{Cor}[Core]{Corollary}

\newtheorem{Note}[Aux]{Note}
\newtheorem{Rem}[Aux]{Remark}

\newtheorem{Ex}[Aux]{Example}

\author{Ilia Pirashvili}
\address{University of Galway, Áras De Brún, Gaillimh/Galway, H91 H3CY, Ireland}
\email{ilia\_p@ymail.com (personal)}
\email{ilia.pirashvili@universityofgalway.ie (work)}

\newcommand{\nEl}{\mathsf{nEll}}
\newcommand{\El}{\mathsf{Ell}_1}
\newcommand{\EG}[1][n]{_{\ensuremath{#1}}\mathsf{EG}}
\newcommand{\ER}[1][n]{_{\ensuremath{#1}}\mathsf{ER}}

\begin{document}
\title{\(n\)-ary elliptic groups, rings, and primes in arithmetic progressions}

\keywords{\(n\)-ary elliptic rings, arithmetic progressions, Dirichlet's theorem,
elliptic groups, quasi-groups, universal algebra, higher-arity algebraic structures}
\subjclass[2020]{20N05, 11N13, 20N15}

\begin{abstract}
  I introduced the notion of an elliptic group in \cite{p11}. It is a quasi-group based
  on the tangent-chord law of elliptic curves and thus, becomes an abelian group upon
  singling out an element. This close proximity to abelian groups is reflected in the
  theory, and among other things, we can define elliptic rings, which are monoidal
  objects in elliptic groups. An other way of expressing this is to say that they are
  commutative monoids with an elliptic group structure that distributes over them.

  In this paper, we generalise this theory from the binary elliptic group structure to
  the \(n\)-ary structure, which we call \(n\)-ary elliptic groups and \(n\)-ary elliptic
  rings. The latter are once again (binary) commutative monoids with an \(n\)-ary
  operation that distributes over the monoidal structure in an \(n\)-ary sense.

  The key interest of these objects for us is their arithmetic properties, which are
  surprisingly pleasant. The key result is that Dirichlet's famous theorem on arithmetic
  progressions becomes simply Euclid's theorem in these \(n\)-ary rings, at least for
  progressions of the form \(an + 1\).

  Motivated by the hope to eventually prove this \(n\)-ary Euclidean theorem purely
  algebraically using the theory of \(n\)-ary rings (and thus give an alternative and
  purely algebraic proof of Dirichlet's theorem), we start by exploring the first
  arithmetic facts of these objects, including introducing the \(n\)-ary class group and
  showing that it indeed captures the unique \(n\)-ary factorisation. We also obtain a
  type of Dedekinds theorem for our main \(n\)-ary ring of interest: \(\nEl(\Z)\).
\end{abstract}

\maketitle

\section{Introduction}

  In \cite{p11}, I introduced the notion of an \emph{elliptic group}, which is a set
  with a binary operation derived by the geometric properties of elliptic curves
  (Bézout's theorem). It is a very well known and classical fact that the so called
  tangent-chord law defies a binary operation that satisfies the following properties:
  \begin{eqnarray*}
    x\ast y                 & = & y\ast x \\
    x\ast (x\ast y)         & = & y       \\
    x\ast (y\ast (z\ast w)) & = & w\ast (y\ast (z\ast x))
  \end{eqnarray*}
  The first one is our commutativity property, the second one (involution) is the
  closes we have to an inverse. Indeed, upon singling out a single point, this property
  is exactly the one that gives us inverses. The third and last property is a type of
  shuffle property that becomes the associativity law in the associated abelian group.
  These laws can be found, among many places, in \cite{silverman}.

  Elliptic groups are, as one would expect, very intimately linked to abelian groups.
  Indeed, as mentioned already, upon singling out any element \(o\), if we define the
  new operation as \(a +_o b := (a\ast b)\ast o\), we get the classical abelian group
  associated to the elliptic curve (which also happens to be its Picard group). The
  theory of elliptic groups is, however, slightly more general, as distinct elliptic
  groups can give rise to the same abelian groups, at least over \(\Q\), though it can
  not happen over \(\C\). This only happens when the obtained groups have \(\Z/3\Z\) as
  factors. Indeed, the number \(3\) appearing is no coincidence, as we will discuss a
  little further down in more detail.\\

  We can consider morphisms between these objects to respect the operation and thus
  obtain a category of elliptic groups. Here, we note that these morphisms will become
  group homomorphisms upon distinguishing an element.

  As with abelian groups, we can consider monoidal objects inside the category of
  elliptic groups and thus, we come to the notion of \emph{elliptic rings}. These are
  commutative monoids whose structure we denote by \(\circ\), over which our binary
  operation \(\ast\) distributes over.

  Starting with an abelian group \(G\), we can form an elliptic group \(\El(G)\) and
  likewise, starting with a commutative ring \(R\) with unit, we can form an elliptic
  ring \(\El(R)\). The operations are obtained by \(a\ast b:= 1 - a - b\) and \(a\circ
  b:= a + b - 3ab\) respectively.

  In \cite{p11}, we introduced this construction for \(\El(\Z)\) and, as a small
  example of this, we showed that Dirichlet's theorem for \(3n - 1\) is actually
  Euclid's theorem in this setting.

  Again, we observe that \(3\) seems to make special appearances, which is not a
  surprise, as elliptic group/ring theory is fundamentally based on elliptic curves,
  which are, of course, curves or degree \(3\). The idea was to see if we could
  generalise this theory to higher-ary structures and thus, obtain similar theory for
  other integers \(n\).\\[2pt]

  In this paper, we explore precisely this train of thought. We define the theory of
  \(n\)-ary rings as commutative monoids with a certain type of \(n\)-ary structure
  (which intakes \(n\) elements and outputs \(1\), and should be thought of as a type
  of addition) that distributes over it. Of course, for \(n=2\), this returns our
  original definition of elliptic groups, respectively elliptic rings.

  This harkens to the general idea of \emph{the geometry of monoids}, that stipulates
  that the multiplicative monoid of a commutative ring carries arguably more
  information than its additive groups and that one should perhaps think of rings not
  as abelian groups with a multiplicative structure, but as (commutative) monoids with
  an additive structure.%TO_DO cite Connes.

  We once again show that its possible to define \(n\)-ary rings based on classical
  rings, a construction we denote by \(\nEl(R)\), in particular based on \(\Z\), and
  show that inside these, the number \(n+1\) does indeed play a special role. One of
  our main theorems \ref{thm:main} says that Dirichlet's theorem for arithmetic
  progressions of the type \(an + 1\) is exactly Euclid's theorem for the \(n\)-ary
  ring \(\nEl(\Z)\). I believe that this observation warrants a further study of these
  objects.\\

  In more detail, we start by introducing and studying general \(n\)-ary (elliptic)
  groups in Section~\ref{sec:nary_ell_groups}. These are sets with an \(n\)-ary
  operation generalising the three axioms of elliptic groups and can be found in
  Definition~\ref{de:nary_group} explicitly. In the immediate example afterwards, we
  show that any group gives rise to such a structure (indeed, one for each element
  \(g\in G\)) in a fairly natural manner, a construction which we will denote by
  \(\nEl_g(G)\).\\

  In like manner, the next section \ref{sec:rings} will introduce the notion of
  \(n\)-ary (elliptic) rings which are effectively the monoid objects in the category
  of \(n\)-ary groups, though we will not formulate it in such a manner. Instead, we
  will simply define them as commutative monoids under an operation \(\circ\) with the
  \(n\)-ary group structure \(\ast\) distributing over it in the following way:
  \[
    (a_1 \ast_n \cdots \ast_n a_n) \circ_n b
    \;=\; (a_1 \circ_n b) \ast_n \cdots \ast_n (a_n \circ_n b),
  \]
  for all \(a_i, b\) in our \(n\)-ary ring \(R\). We will then proceed to develop some
  very basic terms, such as irreducibility, primness, ideals and congruences, but leave
  the general study for later.\\

  This is due to the fact that we will proceed to narrow our focus. Specifically, much
  as abelian groups give rise to \(n\)-ary groups, so do commutative rings give rise to
  \(n\)-ary rings (with the \(n\)-ary group structure being precisely the structure
  obtained form the additive group of the base ring). We introduce this construction in
  Section~\ref{sec:nary_rings_def} and denoted in like-manner to \(n\)-ary groups by
  \(\nEl(R)\). Note that here, we do not use the subscript \(g\), since we can no long
  pick any group element at our discretion due to the distributivity constraint. As
  such, we always assume \(g = 1\) in this paper. We note, however, that divisors of
  \(n+1\) all give rise to such structures in Paragraph~\ref{sec:role_of_r_nary_rings},
  though do not proceed to explore it more depth, yet.

  Having introduced this \emph{standard \(n\)-ary ring} \(\nEl(R)\), we will starts its
  exploration in Section~\ref{sec:nary_rings} and introduce the so called \emph{norm}
  function \(\Sigma_n: \nEl(R)\to R\). It will play a central role throughout the
  paper, as it allows us to reduce numerous questions to the base ring. In turn, its
  ``inverse'' \(J\) (defined by the pre-image of \(\Sigma_n\)) allows us to "lift"
  various constructions and tasks to the \(n\)-ary setting.

  This is a relatively larger section that explores some of the foundational properties
  needed for developing a basic algebraic study of \(\nEl(R)\), including properties of
  the norm function, the kernel (in the special case when \(n+1\) is invertible in
  \(R\), as we have not defined a kernel in general), and first isomorphism theorem,
  localisations and the norm-ideal correspondence, which will play an important role
  throughout the paper.\\

  Having developed the basic algebraic properties of \(\nEl(R)\), we will define and
  explore its basic arithmetic properties in Section~\ref{sec:arith_R}. This can be
  seen as both part II of the same section and part I of the following, and in many
  ways serves as a bridge.

  It deals with units and irreducibility, primeness and maximality, as well as touching
  on notions such as \(n\)-ary integral domains and \(n\)-ary fields.\\

  We will proceed to narrow our focus further and proceed to the first of our three
  main sections, namely Section~\ref{sec:arith_Z}. Here, we study the arithmetic
  properties of \(\nEl(\Z)\) in more detail. In particular, we will show that an
  element \(p\) is \(\circ_n\)-prime in \(\nEl(\Z)\) if and only if its norm
  \(|\Sigma_n|\) is (a classical) prime in \(\Z\).

  Moreover, we give an explicit criterion when \(\circ_n\) irreducibles and
  \(\circ_n\)-primes agree.\\

  This is particularly relevant since we focus on the main motivation of this theory in
  the following Section~\ref{sec:dirichlet}. This is summed up in
  Theorem~\ref{thm:main} where we effectively say that Dirichlet's theorem for a
  progression of the form \(1 + (n+1)\Z\) is effectively Euclid's theorem in the
  \(n\)-ary ring \(\nEl(\Z)\).

  Proving Euclid's theorem for \(\circ_n\)-irreducibles is fairly trivial and done in
  Theorem~\ref{thm:euclid_irred}. However, to obtian Dirichlet's theorem we would need
  to prove Euclid's theorem for \(\circ_n\)-primes, which we are not yet able to do
  without invoking Dirichlet.\\

  This indicates an interest in developing the algebraic number theory of \(\nEl(\Z)\)
  and \(n\)-ary rings in general. We take the very first steps in this direction in our
  penultimate section, namely Section~\ref{sec:class_group}, where we introduce the
  class group of \(\nEl(\Z)\) and show that it is always finite and trivial if an only
  if \(\nEl(\Z)\) has unique factorisaiton (or equivalently, is a PID). We show that
  this is the case if and only of \(n\in\{2,3,5\}\). Due to the finiteness of the
  classgroup, it remains an open question if it is still possible to sharpen our
  arguments to prove that there are infinitely many \(\circ_n\)-primes in \(\nEl(\Z)\)
  and thus, get a distinct and purely algebraic proof of Dirichlet's theorem for
  progressions of the form \(an + 1\).\\

  The last section \ref{sec:localisation} deals with the case \(\nEl(\Z_{n+1})\). As
  already hinted at when mentioning the kernel, things get significantly easier when
  \(n+1\) is invertible in \(\Z\) and we focus on that case in \(\nEl(\Z_{n+1})\). And
  indeed, the theory simplifies significantly. However, it does not bring us any closer
  to proving our desired theorem, but it does show arguebly more explicitly where the
  main obstruction lies.

\subsection*{Conventions}

    Throughout the paper, \(n\geq 2\) is a fixed intiger. Whenever we say ring, we
    assume commutative and with unity. We emphasise that \(0\) is the
    \emph{multiplicative} unit of the monoid \((R, \circ_n, 0)\) and should not be
    confused with the additive (classical \(+\)) unit, nor with the unit of \(\ast_n\)
    (indeed, \(\ast_n\) does not have a unit in general, the closest we get is
    \(\frac{1}{n+1}\) when it exists).

\section{\(n\)-ary elliptic groups}\label{sec:nary_ell_groups}

  The aim of this section is to introduce the notion of an \(n\)-ary (elliptic) group
  and give the main example of our interest in Example~\ref{ex:main}. We will also
  briefly talk about its lins to abelian groups.

\subsection{The axioms}

    \begin{De} \label{de:nary_group}
      An \emph{\(n\)-ary elliptic group} is a set \(S\) equipped with an \(n\)-ary
      operation \(\ast_n: S^n \to S\). For for all \(a_1, \ldots, a_n \in S\), it must
      satisfy the following three properties:
      \begin{enumerate}
        \item[(EG1)] For all permutations \(\sigma \in \mathfrak{S}_n\), we have
          \[
            a_{\sigma(1)} \ast_n \cdots \ast_n a_{\sigma(n)} \;=\; a_1 \ast_n \cdots \ast_n a_n.
          \]
        \item[(EG2)] Denote \(\hat{a} = a_1\ast_n \cdots \ast_n a_{n-1}\). For any \(s\in
          S\), we have
          \[ \left(s \ast_n \hat{a}\right) \ast_n \hat{a} \;=\; s. \]
          \emph{NOTE}: \(\hat{a}\) is merely a short-hand symbol and has no meaning!
          \(\ast_n\) is \emph{not} defined on \(n-1\) elements.
        \item[(EG3)] Let \(x_1,\ldots,x_{n-1},\, y_1,\ldots,y_{n-1},\, z,\,
          w_1,\ldots,w_{n-1} \in S\) and denote the short-hand notations
          \begin{eqnarray*}
            \hat{x} & = & x_1,\ldots,x_{n-1} \\
            \hat{y} & = & y_1,\ldots,y_{n-1} \\
            \hat{w} & = & w_1,\ldots,w_{n-1}.
          \end{eqnarray*}
          We have
          \[
            \tikz[baseline=(eq.base)]{
              \node (eq) {\(
              \hat{x} \ast_n \left( \hat{y} \ast_n \left( z\ast_n \hat{w}\right)\right)
              \)};

              \node (x) at ([yshift=-1pt]eq.south west) {};
              \node (w) at ([yshift=-1pt]eq.south east) {};

              \draw[<->]
              ([xshift=8pt,yshift=0pt]eq.south west)
              .. controls +(0,-0.35) and +(0,-0.35) ..
              ([xshift=-16pt,yshift=0pt]eq.south east);
            }
            \;=\;
            \hat{w}\ast_n \left( \hat{y} \ast_n \left( z\ast_n\hat{x}\right)\right).
          \]
      \end{enumerate}

      A morphism between \(n\)-ary groups is a structure preserving map. That is to say,
      \(f: S \to T\) is a map of \(n\)-ary groups if
      \[ f(a_1 \ast_n \cdots \ast_n a_n) \;=\; f(a_1) \ast_n \cdots \ast_n f(a_n). \]
      The category of \(n\)-ary elliptic groups is denoted by \(\EG\).
    \end{De}

    (EG1) says that \(\ast_n\) is \emph{totally symmetric}, and can be thought of as
    our version of commutativity.

    (EG2) will be reffered to as \emph{Involution} and is the closest we have to
    inverses. It allows us to express any single element in terms of other elements, by
    moving the remaining \(n-1\) elements on the other side.

    (EG3) is our version of associativity. In combination with (EG1), it allows us to
    move any element outside, or inside, a double bracket.

    \begin{Rem}
      The name \emph{\(n\)-ary elliptic group} is chosen out of continuity to the first
      paper \cite{p11}. While there might also be a geometric link, this is not explored
      and not the subject of this paper. Indeed, we will often refer to it as \(n\)-ary
      ring or \(n\)-ary group for short.
    \end{Rem}

    {\bf{Notation:}} Unless there is potential for confusion, we will simply write
    \(\ast\) instead of \(\ast_n\).

    \begin{Ex} \label{ex:main}
      Let \((G, +)\) be an abelian group. For any \(g\in G\), we can define an \(n\)-ary
      operation by setting
      \[ a_1 \ast \cdots \ast a_n \;:=\; g - \sum_{i=1}^n a_i. \]
      Straightforward calculation shows that this defines an \(n\)-ary elliptic group. To
      spell them out: (EG1) holds due to the symmetric nature of our definition.

      \noindent For (EG2), we denote \(\hat{a}:= a_2\ast_n \cdot \ast_n a_n =
      \sum\limits_{i=2}^n a_i\) and see that
      \begin{eqnarray*}
        \left(a_1 \ast_n \hat{a}\right)\ast_n \hat{a} & = & \left(g - \hat{a} - a_1  \right) \ast_n \hat{a} \\
                                                      & = & g - \hat{a} - \left(g - \hat{a} - a_1  \right)  \\
                                                      & = & a_1
      \end{eqnarray*}
      (EG3) is proved like-wise, and we will use notation in like-manner. We expand
      \begin{eqnarray*}
        \hat{x} \ast_n \left( \hat{y} \ast_n \left( z\ast_n \hat{w}\right)\right) & = & \hat{x} \ast_n \left( \hat{y} \ast_n \left( g - z - \hat{w} \right) \right) \\
                                                                                  & = & \hat{x} \ast_n \left( g - \left( g - z - \hat{w} \right) -  \hat{y} \right) \\
                                                                                  & = & g - z  - \hat{w} +  \hat{y} - \hat{x}
      \end{eqnarray*}
      and observe that its symmetric with respect to \(\hat{x}\) and \(\hat{w}\), giving
      us our proof. These are equal as desired. We call this the \emph{standard \(n\)-ary
      elliptic group over} \(g\) and denote it by \(\nEl_g(G)\).
    \end{Ex}

    {\bfseries Notation:} Instead of writing \(\nEl_1(G)\), we will simply write
    \(\nEl(G)\).

    \begin{Rem}[Notation warning]
      To avoid any undue confusion, we note that we used the notation \(\El(\Z)\) in
      \cite{p11}. Though the elliptic group as defined in said paper is using \(g = 1\)
      in the \(\ast\) operation, the subscript \(1\) in \(\El(\Z)\) corresponds to
      something else. In the notation of this paper, we have \(\El(\Z) = 2\mathsf{El}(\Z)
      (= 2\mathsf{El}_1(\Z)\) if one wants to be very precese).
    \end{Rem}

\subsection{Descending maps to abelian groups} \label{sec:descend_to_groups}

    Let \((S, \ast_n\) be an \(n\)-ary elliptic group and fix any element \(o\in S\).
    Define the \(n-1\)-ary operation
    \[
      a_1 \ast_{n-1} \cdots \ast_{n-1} a_{n-1}
      \;:=\; a_1 \ast_n \cdots \ast_n a_{n-1} \ast_n o,
    \]
    for \(a_i\in S\). This gives rise to a functor
    \[ \ast_n^{o}:\; \EG \to \EG[n-1]. \]
    Upon iterating this, we can descend all the way to the binomial setting. Denote the
    sequence of elements used by \(\bo = (o_1, \ldots, o_{n-2}) \in S^{n-2}\). We thus
    have the composite functor
    \[
      \ast^\bo
      \;:=\; \ast_3^{o_{n-2}} \circ \cdots \circ \ast_n^{o_1}:\; \EG \to \EG[2].
    \]
    Thus, we can use our results in \cite{p11}. Among other things, we have:

    \begin{Le}
      For any \(\bo \in S^{n-2}\) and \(o\in S\), the operation
      \[ a +_\bo b \;:=\; o \ast^{\bo} (a \ast^{\bo} b), \]
      defines abelian group group structure on \(S\).
    \end{Le}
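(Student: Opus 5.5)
The plan is to reduce to the binary case treated in \cite{p11}. Two things need to be checked: that each descent functor \(\ast_n^{o}\) really lands in \(\EG[n-1]\), and that the binary result then gives an abelian group.

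\textbf{Step 1: one descent step.} Fix \(o\in S\) and put \(a_1\ast_{n-1}\cdots\ast_{n-1}a_{n-1}:=a_1\ast_n\cdots\ast_n a_{n-1}\ast_n o\). I will verify (EG1)--(EG3) for \(\ast_{n-1}\) with \(n-1\geq 2\).
\begin{itemize}
\item (EG1) is immediate from (EG1) for \(\ast_n\), since only the first \(n-1\) slots are permuted.
\item For (EG2), write \(\hat a=a_1,\ldots,a_{n-2}\). Then \((s\ast_{n-1}\hat a)\ast_{n-1}\hat a=\bigl((s\ast_n \hat a\ast_n o)\ast_n \hat a\ast_n o\bigr)\). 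This is exactly (EG2) for \(\ast_n\) with the \((n-1)\)-tuple \(\hat a, o\), so it equals \(s\).
\item For (EG3), take \((n-2)\)-tuples \(\hat x,\hat y,\hat w\). Expanding gives the expression \((\hat x,o)\ast_n\bigl((\hat y,o)\ast_n(z\ast_n(\hat w,o))\bigr)\), where each bracket now contains \(n-1\) outer arguments. Applying (EG3) for \(\ast_n\) with the \((n-1)\)-tuples \((\hat x,o)\) and \((\hat w,o)\) swaps them. This yields \((\hat w,o)\ast_n\bigl((\hat y,o)\ast_n(z\ast_n(\hat x,o))\bigr)\), which is the required identity.
\end{itemize}
Functoriality is clear, since a morphism preserving \(\ast_n\) and sending \(o\mapsto o\) preserves \(\ast_{n-1}\). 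Strictly speaking, the functor is only defined on pointed objects, but for the lemma we only need it on objects.

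\textbf{Step 2: iterate to a binary structure.} Apply Step 1 successively with \(o_1,\ldots,o_{n-2}\). By induction this produces a binary operation \(\ast^{\bo}\) on \(S\) satisfying the three binary axioms of \cite{p11}: commutativity, \(x\ast(x\ast y)=y\), and the shuffle law. The involution law needs a small remark: (EG2) for \(n=2\) reads \((s\ast a)\ast a=s\), and together with commutativity this gives \(a\ast(a\ast s)=s\).

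\textbf{Step 3: pass to an abelian group.} By the result of \cite{p11}, singling out \(o\) gives an abelian group via \(a+b:=(a\ast^{\bo}b)\ast^{\bo}o\), and by commutativity this equals \(o\ast^{\bo}(a\ast^{\bo}b)\). For completeness I would recall why this works.
\begin{itemize}
\item Commutativity of \(+\) is immediate.
\item The identity is \(e=o\ast o\), because \(e\ast a\) satisfies \((o\ast o)\ast a = a\ast(o\ast o)\). One then uses the shuffle law to get \(o\ast((o\ast o)\ast a)=a\).
\item The inverse of \(a\) is \(a\ast(o\ast(o\ast o))\), again by involution.
\item Associativity follows from the shuffle law with \(z\) chosen appropriately.
\end{itemize}

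\textbf{Main obstacle.} The only real work is the (EG3) bookkeeping in Step 1: making sure that the inserted copies of \(o\) sit in the tuples being exchanged, so that (EG3) for \(\ast_n\) applies verbatim. This is handled by putting \(o\) in the last slot of every bracket and using (EG1) to move it there whenever needed.
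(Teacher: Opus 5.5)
Your route is the paper's route: descend through the functors \(\ast_n^{o_i}\) to a binary elliptic group and then invoke \cite{p11}. Your Step~1 check that one descent step preserves (EG1)--(EG3) is correct, and placing \(o\) in each of the exchanged \((n-1)\)-tuples is exactly what makes (EG3) apply verbatim. The paper only asserts this step, so your verification fills in what it leaves implicit. Steps~2 and the citation in Step~3 are also fine, so the lemma follows.

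The ``for completeness'' recap in Step~3 is wrong, however, and should be fixed or dropped.
\begin{itemize}
\item \emph{Neutral element.} The neutral element of \(a+b=o\ast(a\ast b)\) is \(o\), not \(o\ast o\). Indeed \(a+o=o\ast(a\ast o)=a\) is just the involution law.
\item \emph{Your claimed identity fails.} The equation \(o\ast((o\ast o)\ast a)=a\) is false in general. On \(\Z\) with \(x\ast y=1-x-y\) and \(o=0\):
\[
o\ast o=1,\qquad (o\ast o)\ast a=-a,\qquad o\ast(-a)=1+a\neq a .
\]
\item \emph{Inverse.} The inverse of \(a\) is \(a\ast(o\ast o)\), since
\[
a\ast\bigl(a\ast(o\ast o)\bigr)=o\ast o \qquad\text{and}\qquad o\ast(o\ast o)=o .
\]
Your candidate \(a\ast(o\ast(o\ast o))=a\ast o\) instead gives \(a+(a\ast o)=o\ast o\), which equals \(1\neq 0\) in the example.
\item \emph{Associativity.} This remark is fine. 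The shuffle law with \(x=c\), \(y=o\), \(z=b\), \(w=a\) gives
\[
(o\ast(a\ast b))\ast c \;=\; a\ast(o\ast(b\ast c)),
\]
which is exactly \((a+b)+c=a+(b+c)\).
\end{itemize}
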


\section{\(n\)-ary elliptic rings}\label{sec:rings}

  \begin{De}
    An \emph{\(n\)-ary elliptic ring} is a (commutative) monoid \((A, \circ_n, e)\),
    endowed with an \(n\)-ary group structure \(\ast_n: A^n \to A\) such that for all
    \(a_1, \ldots, a_n, b \in A\), the \(n\)-ary distributivity
    \begin{equation} \label{eq:dist}
      (a_1 \ast_n \cdots \ast_n a_n) \circ_n b
      = (a_1 \circ_n b) \ast_n \cdots \ast_n (a_n \circ_n b)
    \end{equation}
    holds. For simplicity, we will generally write \(A\) instead of \((A, \circ_n, e,
    \ast_n)\).

    A \emph{morphism} of \(n\)-ary elliptic rings \(f: A \to B\) is a monoid homomorphism
    that is simultaneously a morphism of \(n\)-ary elliptic groups.

    The category of \(n\)-ary rings is denoted by \(\ER\).
  \end{De}

  {\bfseries{Notation:}} Unless there is potential for confusion, we will simply write
  \(\circ\) instead of \(\circ_n\).

  \begin{Rem}
    It is clear that by symmetry (commutativity of \(\circ\)), the distributivity law
    \eqref{eq:dist} holds from the other side as well, meaning
    \[
      b \circ (a_1 \ast \cdots \ast a_n)
      \;=\; (b \circ a_1) \ast \cdots \ast (b \circ a_n).
    \]
  \end{Rem}

  The descend from \(n\)-ary elliptic groups to \(n-1\)-ary elliptic groups discussed
  in Section~\ref{sec:descend_to_groups} translates to a descend for \(n\)-ary elliptic
  rings, that is to say, upon fixing an element \(o\in A\), we have a functor
  \[ \ast_n^o: \ER \to \ER[n-1]. \]
  In like-manner to Section~\ref{sec:descend_to_groups}, we can thus obtain a functor
  to (binary) elliptic rings introduced in \cite{p11}.

\subsection{\(n\)-ary elliptic ideals}

    Our aim is to explore the theory of \(n\)-ary algebraic number theory, and, as
    expected, ideal-theory play a crucial role. However, much as (ring) ideals are not
    simply monoid ideals, but monoid ideals that respect the additive structure,
    \(n\)-ary ideals must also be closed under the second operation. We make this
    concrete with the following definition:

    \begin{De}
      A subset \(I\subseteq A\) of an \(n\)-ary elliptic ring \(A\) is called an
      \emph{\(n\)-ary ideal} if
      \begin{enumerate}
        \item[(I1)] for all \(a_1, \ldots, a_n \in I\),
          we have \(a_1 \ast_n \cdots \ast_n a_n \in I\) and
        \item[(I2)] for all \(a\in A\), \(r\in I\), we have \(a \circ_n r \in I\).
      \end{enumerate}
    \end{De}

    If the context is clear, we will simply say ideal. In particular, an \(n\)-ary
    ideal is a monoid ideal of the underlying \(\circ_n\)-monoid. As the theory of
    monoid schemes shows, this now allows us to effectively do algebraic geometry in
    this setting.

    \begin{Rem}
      We note that, just like for monoids, \(n\)-ary elliptic ideals may be empty.
    \end{Rem}

    \begin{De}
      A set of the form
      \[ (a) \;:=\; \{ a \circ_n r \mid r \in A \} \]
      for an element \(a\in A\) is called a \emph{principal ideal}.
    \end{De}

    The fact that it is an ideal immediately follows form
    \[
      a_1 \ast_n \cdots \ast_n a_n \;=\; (p \circ_n r_1) \ast_n \cdots \ast_n (p \circ_n r_n)
      \;=\; p \circ_n (r_1 \ast_n \cdots \ast_n r_n) \in (p). \qedhere
    \]

\subsection{Congruences and quotients}

    A congruence on an \(n\)-ary elliptic ring \(A\) is, as one would expect from the
    general theory of universal algebra, an equivalence relation \(\sim\subseteq A^2\)
    such that \(A/\sim\) is an \(n\)-ary elliptic ring in the natural way and the
    canonical surjection \(A\to A/\sim\) is an \(n\)-ary ring homomorphism. That is to
    say, if \(\sim\) \emph{respects} the operations \(\circ_n\) and \(\ast_n\).

\subsubsection{Congruences form ideals}

      For a non-empty \(n\)-ary ideal \(I\subseteq A\) of an \(n\)-ary elliptic ring
      \(A\), we can define a congruence by setting \(x\sim_I y\), \(x, y\in A\) if and
      only if there exist \(a_1,\ldots, a_{n-1}, b_1,\ldots, b_{n-1}\in I\), such that
      \begin{eqnarray} \label{eq:congruence}
        x \ast a_1 \ast \cdots \ast a_{n-1} = y \ast b_1 \ast \cdots \ast b_{n-1}.
      \end{eqnarray}
      We call the elements \(a_1,\ldots, a_{n-1}\), \(b_1,\ldots, b_{n-1}\)
      \emph{witnesses} of the congruence.

      If \(I\) is empty, we simply define \(\sim_I\) to be trivial, meaning \(x \sim_I
      y\) if and only if \(x = y\).

      \begin{Rem}
        Compare this with the way we defined congruences on monoids (by assuming \(\ast\)
        were out monoid operation). Note that if \(\ast\) were the standard addition on a
        commutative ring, we could simplify \(x + a = y + b\) to \(x - y = b - a \in I\),
        since \(+\) has inverses. In the monoidal setting, if \(+\) did not have inverses
        (e.g. in \(\N\), the congruence would simply be defined as \(x + a = y + b\)
        without further simplification. Our definition is thus simply the most obvious
        choice.
      \end{Rem}

      \begin{Rem} \label{rem:simpler_eq}
        While we can not directly subtract, we can use (EG2) to do something a bit
        similar. Specifically, we can \(\ast_n\) the elements \(a_1,\ldots, a_{n-1}\) on
        both sides to rephrase Condition~\eqref{eq:congruence} as
        \[
          x \;=\; (y \ast b_1 \ast \cdots \ast b_{n-1})\ast a_1 \ast \cdots \ast a_{n-1}
        \]
        This form is often more convenient for proofs, as the appropriate witnesses can
        be derived rather than produced from thin air.
      \end{Rem}

      \begin{Pro}
        The relation \(\sim_I\) is a congruence relation on \((A, \ast, \circ)\).
      \end{Pro}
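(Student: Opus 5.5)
The plan is to check the three properties of an equivalence relation, and then compatibility with \(\circ\) and with \(\ast\). Throughout I use the reformulation of Remark~\ref{rem:simpler_eq}: \(x\sim_I y\) if and only if \(x=(y\ast\hat b)\ast\hat a\) for some \((n-1)\)-tuples \(\hat a,\hat b\) of elements of \(I\). If \(I=\emptyset\), the relation is equality and there is nothing to prove, so assume \(I\neq\emptyset\).

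\emph{Reflexivity and symmetry.} For reflexivity, pick any \(e\in I\) and take all witnesses equal to \(e\). Symmetry is immediate, since the defining equation \eqref{eq:congruence} is symmetric in \((x,\hat a)\leftrightarrow(y,\hat b)\).

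\emph{Compatibility with \(\circ\).} Suppose \(x\ast a_1\ast\cdots\ast a_{n-1}=y\ast b_1\ast\cdots\ast b_{n-1}\) and \(r\in A\). Apply \(-\circ r\) to both sides and use the distributivity \eqref{eq:dist}. This gives
\[
(x\circ r)\ast(a_1\circ r)\ast\cdots\ast(a_{n-1}\circ r)=(y\circ r)\ast(b_1\circ r)\ast\cdots\ast(b_{n-1}\circ r).
\]
By (I2) the new witnesses \(a_i\circ r\) and \(b_i\circ r\) lie in \(I\), so \(x\circ r\sim_I y\circ r\). Since \(\circ\) is commutative, and granted transitivity, this gives \(x\circ x'\sim_I y\circ y'\) whenever \(x\sim_I y\) and \(x'\sim_I y'\).

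\emph{Compatibility with \(\ast\).} By (EG1) and transitivity it suffices to change one argument at a time. So assume \(x=(y\ast\hat b)\ast\hat a\) and show \(x\ast\hat t\sim_I y\ast\hat t\) for an arbitrary \((n-1)\)-tuple \(\hat t\). Substituting gives \(x\ast\hat t=\hat t\ast\bigl(\hat a\ast(y\ast\hat b)\bigr)\). Applying (EG3) with \(\hat x=\hat t\), \(\hat y=\hat a\), \(z=y\), \(\hat w=\hat b\) turns this into \(\hat b\ast\bigl(\hat a\ast(y\ast\hat t)\bigr)\). That is, \(x\ast\hat t=\bigl((y\ast\hat t)\ast\hat a\bigr)\ast\hat b\), which is exactly the Remark~\ref{rem:simpler_eq} form of \(x\ast\hat t\sim_I y\ast\hat t\), with witnesses from \(I\).

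\emph{Transitivity.} This is the step I expect to be the main obstacle. From \(x=(y\ast\hat b)\ast\hat a\) and \(y=(z\ast\hat d)\ast\hat c\) we get the four-fold nesting
\[
x=\Bigl(\bigl((z\ast\hat d)\ast\hat c\bigr)\ast\hat b\Bigr)\ast\hat a .
\]
We must reduce this to a two-fold nesting \((z\ast\hat f)\ast\hat e\) with \(\hat e,\hat f\) in \(I\). The tools are:
\begin{enumerate}
\item[(i)] (EG3), which lets us exchange the outermost tuple with the innermost tuple inside any double bracket \(\hat u\ast(\hat v\ast(w\ast\hat s))\);
\item[(ii)] (EG2), which lets us insert an identity \(w=(w\ast\hat g)\ast\hat g\) for a tuple \(\hat g\) of elements of \(I\);
\item[(iii)] (I1), which lets us amalgamate \(n\) elements of \(I\) into one element of \(I\).
\end{enumerate}
Concretely, I would first use (EG3) to bring one tuple, say \(\hat b\), next to \(z\), producing \((z\ast\hat b)\) in the innermost slot. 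Then I would use (EG2)/(EG3) to trade an inner tuple for a single element of the form \(c_1\ast\cdots\ast c_{n-1}\ast g\), which lies in \(I\) by (I1). Each such exchange shortens the nesting by one level. If this bookkeeping becomes unwieldy, the fallback is to pass to the abelian group of Section~\ref{sec:descend_to_groups}, obtained by fixing a base point \(\bo\) in \(I\). There \(x\sim_I y\) should become the statement that \(x-y\) lies in the subgroup generated by the \(\ast\)-closure of \(I\), and transitivity is then just closure of that subgroup under addition. The remaining check in that route is that the descent really identifies \(\sim_I\) with this coset relation.
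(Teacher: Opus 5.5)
\textbf{There is a genuine gap: transitivity is not proved.} Your reflexivity and symmetry are fine. Your compatibility arguments are also correct, and cleaner than the paper's, which treats only \(n=3\), substitutes all arguments at once and leaves the rest as ``bookkeeping''. Applying \(-\circ r\) to the defining equation and using (I2) is exactly right for \(\circ\). So is the single instance \(\hat t\ast(\hat a\ast(y\ast\hat b))=\hat b\ast(\hat a\ast(y\ast\hat t))\) of (EG3) for \(\ast\).

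However, both of these one-argument-at-a-time reductions invoke transitivity, so everything hinges on the step you leave open. Your proposed first move uses (EG3) to exchange \(\hat b\) and \(\hat d\) so that \(z\ast\hat b\) is innermost. That only permutes the witness tuples and leaves the four-fold nesting intact. You never say which instance of (EG3) ``shortens the nesting by one level''.

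The fallback does not close the gap either. Identifying \(\sim_I\) with a coset relation in the descended abelian group amounts to showing that the set of differences realisable with exactly \(n-1\) witnesses on each side is closed under addition. That is transitivity again. You would also first have to express \(\ast_n\) affinely through \(+_{\bo}\), which the paper does not provide.

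\textbf{The missing idea.} Let \(z\) be a member of the inner \((n-1)\)-tuple in (EG3), and let one witness occupy the lone innermost slot. A single application then moves \(z\) all the way out.

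Write \(\hat d=\bar{\hat d}\ast d_{n-1}\) with \(\bar{\hat d}=d_1,\ldots,d_{n-2}\). From \(x\ast\hat a=y\ast\hat b\) and \(y=(z\ast\hat d)\ast\hat c\) you get \(x\ast\hat a=((z\ast\bar{\hat d}\ast d_{n-1})\ast\hat c)\ast\hat b\). Apply (EG3) with outer tuple \(\hat b\), middle tuple \(\hat c\), lone innermost element \(d_{n-1}\) and innermost tuple \((z,\bar{\hat d})\):
\[
((z\ast\bar{\hat d}\ast d_{n-1})\ast\hat c)\ast\hat b=((\hat b\ast d_{n-1})\ast\hat c)\ast z\ast\bar{\hat d}.
\]
Here \(\hat b\ast d_{n-1}\) is a product of \(n\) elements of \(I\). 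Two applications of (I1) therefore give \(e:=(\hat b\ast d_{n-1})\ast\hat c\in I\). Hence \(x\ast\hat a=z\ast e\ast d_1\ast\cdots\ast d_{n-2}\), that is, \(x\sim_I z\) with witnesses in \(I\).

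This is the paper's argument. With it in place, the rest of your proof goes through.
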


      \begin{proof}
        By construction, there is nothing to prove if \(I\) is empty. Hence, we will
        assume throughout that its non-empty.

        We start by checking that its an equivalence relation. Reflexivity is clear and
        so is symmetry. Transitivity takes a little work, but is not very hard.
        Specifically, let \(x \sim_I y\) and \(y \sim_I z\) be witnessed by
        \[
          x\ast \hat{a} \;=\; y\ast \hat{b}, \qquad y\ast \hat{c} \;=\; z\ast \hat{d},
        \]
        where \(\hat{a} = a_1 \ast \cdots \ast a_{n-1}\), and similarly for \(\hat{b},
        \hat{c}\), and \(\hat{d}\), all with members in \(I\). Note that as in
        Definition~\ref{de:nary_group} and in other places, \(\hat{a}\) is merely a
        short-hand notation, and can not be evaluated.

        We use (EG2) on the second equation to expose \(y\) as
        \[ y \;=\; (z \ast \hat{d}) \ast \hat{c} \]
        and substitute it into the first equation. We get
        \[ x \ast \hat{a} \;=\; ((z \ast \hat{d}) \ast \hat{c}) \ast \hat{b}. \]
        Recalling \(\hat{d} = d_1 \ast \cdots \ast d_{n-1}\), write \(\bar{\hat{d}}:=d_1
        \ast \cdots \ast d_{n-2}\) to express \(\hat{d} = \bar{\hat{d}}\ast d_{n-1}\). We
        have singled out \(d_{n-1}\) but of course, we could have singled out any other
        terms as well. Apply (EG3) to RHS to move \(z\) outside
        \begin{eqnarray*}
          ((z \ast \hat{d}) \ast \hat{c}) \ast \hat{b}                       & = & 
          ((z \ast \bar{\hat{d}}\ast d_{n-1}) \ast \hat{c}) \ast \hat{b}  \\
                                                                             & = & 
          ((\hat{b} \ast d_{n-1}) \ast \hat{c}) \ast z \ast \bar{\hat{d}} \\
                                                                             & = & 
          z \ast  ((\hat{b} \ast d_{n-1}) \ast \hat{c}) \ast \bar{\hat{d}},
        \end{eqnarray*}
        where we used \((EG1)\) at the end. Summarising our manipulation gives
        \[
          x \ast \hat{a}
          \;=\; z \ast ((\hat{b} \ast d_{n-1}) \ast \hat{c}) \ast \bar{\hat{d}},
        \]
        Setting \(e_1 = (\hat{b} \ast d_{n-1}) \ast \hat{c}\) and \(e_i = d_{i-1}\) for
        \(2 \leq i \leq n-1\), we have
        \[ x\ast \hat{a} \;=\; z\ast e_1\ast \cdots\ast e_{n-1}. \]
        It remains to check that \(e_1, \ldots, e_{n-1} \in I\). For \(e_2 = d_1, \ldots,
        e_{n-1} = d_{n-2}\) this is trivial. For \(e_1\), we observe that every component
        of \(\bar{b}\), \(\bar{c}\) and \(d_{n-1}\) are elements of \(I\) and by (I1),
        \(e_1\in I\) follows.\\

        We now proceed to prove that it is a congruence relation. We will show this for
        \(n=3\) since the rest is just ``bookkeeping''. Let \(x\sim_I x'\), \(y\sim_I
        y'\), \(z\sim_I z'\). That is to say, there exist \(a,a', b,b', c,c', d,d', e,e',
        f,f'\), such that
        \begin{eqnarray}
          x\ast a\ast b & = & x'\ast a'\ast b' \\
          y\ast c\ast d & = & y'\ast c'\ast d' \\
          z\ast e\ast f & = & z'\ast e'\ast f'
        \end{eqnarray}
        Use (- \(\ast a\ast b\)) (and likewise for \(c,d\) and \(e,f\)) on LHS and RHS
        from the right and EG2 to get
        \begin{eqnarray}
          x & = & (x'\ast a'\ast b')\ast a\ast b \label{eq:x} \\
          y & = & (y'\ast c'\ast d')\ast c\ast d \label{eq:y} \\
          z & = & (z'\ast e'\ast f')\ast e\ast f \label{eq:z}
        \end{eqnarray}
        We have to show that there exist \(g,g', h,h'\) such that:
        \begin{itemize}
          \item[\(\circ\)] \((x\circ y)\ast g\ast h = (x'\circ y')\ast g'\ast h'\). For
            this, observe
            \[
              \begin{array}{rclcl}
                x\circ y                                  & \;=\; & \big((x'\ast a'\ast b')\ast a\ast b\big) \circ
                \big((y'\ast c'\ast d')\ast c\ast d \big)
                                                          &       & Eq\ (\ref{eq:x}), (\ref{eq:y}) \\
                                                          & \;=\; & [\big((x'\ast a'\ast b')\ast a\ast b\big) \circ (y'\ast c'\ast d')]
                                                          &       & \,\text{Distributivity}\,      \\
                                                          &       & [\big((x'\ast a'\ast b')\ast a\ast b\big) \circ c]\ast
                                                          &       &                                \\
                                                          &       & [\big((x'\ast a'\ast b')\ast a\ast b\big) \circ d]\ast
                                                          &       &                                \\
                                                          & \;=\; & \cdots
                                                          &       &                                \\
              \end{array}
            \]
            We note that it will take the following form once we are done:
            \[
            x\circ y \;=\; (x'\circ y')\ast \,\text{ ``Terms with }\, u \circ v \,\text{ where at least one of }\, u,v \,\text{ is in }\, I \,\text{ ''.}\, \]
            Hence \(u\circ v\in I\). Using Remark \ref{rem:simpler_eq}, we are now done.
            \item[\(\ast\)] We have to show \((x\ast y\ast z)\ast g\ast h = (x'\ast y'\ast
            z')\ast g'\ast h'\). We take
            \[
              \begin{array}{rclcl}
                x \ast y\ast z & \;=\; & ((x'\ast a'\ast b')\ast a\ast b)\ast y \ast z
                               &       & Eq\ (\ref{eq:x}) \\
                               & \;=\; & ((x'\ast \underline{y\ast z})\ast a\ast b)\ast \underline{a' \ast b'}
                               &       & (EG3)            \\
                               & \;=\; & ((x'\ast ((y'\ast c'\ast d')\ast c\ast d)\ast z)\ast a\ast b)\ast a' \ast b'
                               &       & Eq\ (\ref{eq:y}) \\
                               & \;=\; & ((((y'\ast c'\ast d')\ast c\ast d)\ast \underline{x'}\ast z)\ast a\ast b)\ast a' \ast b'
                               &       & (EG1)            \\
                               & \;=\; & ((((y'\ast \underline{x'\ast z})\ast c\ast d)\ast \underline{c'\ast d'}))\ast a\ast b)\ast a' \ast b'
                               &       & (EG3)            \\
                               & \;=\; & ((((y'\ast x'\ast ((z'\ast e'\ast f')\ast e\ast f))\ast c\ast d)\ast c'\ast d')\ast a\ast b)\ast a' \ast b'
                               &       & Eq\ (\ref{eq:z}) \\
                               & \;=\; & ((((((z'\ast e'\ast f')\ast e\ast f)\ast \underline{y'\ast x'})\ast c\ast d)\ast c'\ast d')\ast a\ast b)\ast a' \ast b'
                               &       & (EG1)            \\
                               & \;=\; & ((((((z'\ast \underline{y'\ast x'})\ast e\ast f)\ast \underline{e'\ast f'}))\ast c\ast d)\ast c'\ast d')\ast a\ast b)\ast a' \ast b'
                               &       & (EG3).
              \end{array}
            \]
            The "sum" \(z'\ast y'\ast x'\) is now a single element and it is clear that
            through systematic use of (EG1) and (EG3), we can ``wriggle'' it all the way
            outside the brackets, which will leave us with something of the form \(x\ast
            y\ast z = (z'\ast y'\ast x')\ast g'' \ast h''\), and Remark \ref{rem:simpler_eq}
            will now give the desired result.
        \end{itemize}
      \end{proof}

      \begin{De}
        Define the \emph{quotient \(n\)-ary elliptic ring} \(A/I\) to be the set of
        equivalence classes
        \[ [a] \;=\; \{b \in A \mid b \sim_I a\} \]
        under the congruence \(\sim_I\), endowed with the operations
        \[
          [a_1] \ast_n \cdots \ast_n [a_n] \;:=\; [a_1 \ast_n \cdots \ast_n a_n],
          \quad
          [a] \circ_n [b] \;:=\; [a \circ_n b],
          \quad
          e_{A/I} \;:=\; [e].
        \]
        This makes the canonical map \(\pi: A \to A/I\), \(\pi(a) = [a]\) a surjection of
        \(n\)-ary elliptic rings.
      \end{De}

\subsection{Units, irreducibles and cancellativeness}

    We already mentioned that since an \(n\)-ary ring is a monoid (seen as
    multiplication) with an additional structure allows us to do the core constructions
    of algebraic geometry in this setting. In particular, we inherit notions of units,
    associate elements, divisibility, irreducibility, localisations and numerous
    similar things. The notions listed in this section are merely the notions of the
    multiplicative monoid of \(n\)-ary rings, which we merely list for the convenience
    of the reader. Remark~\ref{rem:cancellative} is still recommended to be looked at
    by all readers, to avoid confusion.

    \begin{De}[Units]
      A \emph{\(\circ_n\)-unit} is an element \(\in A\) if there exists \(v \in A\) such
      that \(u \circ_n v = e\), where \(e\) is the identity of \(\circ_n\). The set
      \(A^\times\) of all \(\circ_n\)-units forms an abelian group. Two elements \(a, b
      \in A\) are called \emph{\(\circ_n\)-associates}, written \(a\sim b\) if \(a = b
      \circ_n u\) for some unit \(u\). In other words, if they define the same principal
      ideals, meaning \((a) = (b)\). We say \(a\) \emph{\(\circ_n\)-divides} \(b\),
      written \(a \mid_n b\), if \(b \in (a)\).
    \end{De}

    \begin{De}[Cancellative]
      An element \(m\in M\) of a monoid \(M\) is \emph{cancellative} if for all \(a,b\in
      M\), \(ma = mb\) implies \(a = b\). An element \(z\) is called an \emph{absorbing
      element} (often called \emph{zero} as well) if for all \(m\in M\), \(zm = z\).

      A monoid \(M\) is cancellative if every element is cancellative. A monoid with a
      \emph{distinguished} absorbing element \(z\in M\) is called \emph{cancellative} if
      every element \(m\neq z\) is cancellative.
    \end{De}

    \begin{Rem}
      Note that when we say ``a monoid with a distinguished absorbing element'', we mean
      that the absorbing element is fixed and respected, rather than ``accidentally''. In
      such situations, we would generally require homomorphisms to respect them, ideals
      would be required to contain them (non-empty) etc. To simplify terminology, monoids
      with distinguished absorbing elements are called \emph{binoids} in \cite{p7}.
    \end{Rem}

    \begin{Rem}[\(n\)-Ary rings with absorbing elements] \label{rem:cancellative}
      For the \(\circ_n\)-monoid of an \(n\)-ary ring, we will show in
      Lemma~\ref{le:zero}(i) that there is a natural candidate for the absorbing element
      \(z\), namely \(z = 1/(n+1)\) (when it exists). In such cases, it is best to
      respect that element, hence our slightly convoluted definition of cancellativeness.
    \end{Rem}

    The term absorbing element was preferred over zero in this paper since \(0\) is
    already going to be overburdened in this paper (in a ring \(R\) it will be the
    additive unit, multiplicative zero/absorbing element and \(\circ_n\)-unit in
    \(\nEl(R)\), rather than a \(\circ_n\)-zero/\(\circ_n\)-absorbing element).

    \begin{De} \label{def:cancellative}
      An \(n\)-ary elliptic ring \(A\) is \emph{\(\circ_n\)-cancellative} if for all \(a,
      b, c \in A\),
      \[
        a \circ_n b \;=\; a \circ_n c \implies b \;=\; c \ \,\text{ or }\, \ a \;=\; z_A,
      \]
      where \(z_A\) denotes the absorbing element of \(A\) (if it exists).
    \end{De}

    Divisibility \(\mid_n\) defines a preorder. In a cancellative monoid, \(a \mid_n
    b\) and \(b \mid_n a\) implies \(a \sim b\). Moreover, if the monoid is sharp, that
    is to say, has only the single invertible element \(e\), the elements \(a\) and
    \(b\) are equal.

    \begin{De}
      A non-unit \(p \in A\) is \emph{\(\circ_n\)-irreducible} (or an \emph{atom}) if \(p
      = a \circ_n b\) implies \(a \in A^\times\) or \(b \in A^\times\).
    \end{De}

    \begin{Pro}[Factorisation in irreducibles for ACCP] \label{pro:factorisation_irred}
      An \(n\)-ary ring \(A\) satisfies ACCP (Ascending Chain Condition on Principal
      Ideals), meaning, every element can be written as a product of irreducibles.
    \end{Pro}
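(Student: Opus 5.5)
The statement as printed is a little compressed, so I would first make precise what I intend to prove. The claim is: \emph{if} \(A\) satisfies ACCP, then every non-unit element of \(A\), other than the absorbing element \(z_A\) when it exists, is a finite \(\circ_n\)-product of \(\circ_n\)-irreducibles. I would also assume \(A\) is \(\circ_n\)-cancellative in the sense of Definition~\ref{def:cancellative}. The reason is that the strictness step below needs to cancel, and in a non-cancellative monoid ACCP alone does not force factorisation.

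The argument is the classical one for monoids and uses only the multiplicative monoid \((A,\circ_n,e)\); the operation \(\ast_n\) plays no role. Let \(\mathcal{S}\) be the set of principal ideals \((a)\) with \(a\neq z_A\) a non-unit that is not a finite product of irreducibles, and suppose \(\mathcal{S}\neq\emptyset\). By ACCP, together with dependent choice (a chain with no maximal element could be extended forever), \(\mathcal{S}\) has a maximal element \((a)\). Then \(a\) is not irreducible, since otherwise it would be a product of one irreducible. So \(a=b\circ_n c\) with neither \(b\) nor \(c\) a unit. Both \(b\) and \(c\) are different from \(z_A\), because otherwise \(a=z_A\). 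Clearly \((a)\subseteq(b)\) and \((a)\subseteq(c)\).

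The key step is strictness: I claim \((a)\subsetneq(b)\). If \((a)=(b)\), then \(b=a\circ_n r=b\circ_n(c\circ_n r)=b\circ_n e\) for some \(r\). Since \(b\neq z_A\), cancellativeness gives \(c\circ_n r=e\), so \(c\) is a unit, which is a contradiction. Symmetrically \((a)\subsetneq(c)\). By maximality of \((a)\) in \(\mathcal{S}\), neither \((b)\) nor \((c)\) lies in \(\mathcal{S}\). Since \(b\) and \(c\) are non-units different from \(z_A\), each is therefore a product of irreducibles. Then so is \(a=b\circ_n c\), contradicting \((a)\in\mathcal{S}\). Hence \(\mathcal{S}=\emptyset\).

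The main obstacle is exactly this strictness step. Without cancellation, \((a)=(b\circ_n c)=(b)\) can occur with \(c\) a non-unit, and the descending factorisation need not terminate. So the essential work is in choosing the correct hypotheses: the absorbing-element exception from Remark~\ref{rem:cancellative} and cancellativeness. The element \(z_A\) itself must be excluded or treated separately, since \(z_A=z_A\circ_n z_A\) may admit no factorisation into irreducibles.
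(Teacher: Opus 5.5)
Your argument is correct, and there is no proof in the paper to compare it with: the proposition is stated without proof, evidently as the standard monoid fact. You are right to read the compressed statement as ``ACCP implies every non-unit factors into irreducibles''. Your extra hypotheses are also necessary, because the statement as printed is false without them.

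In \(\nEl(\Q)\), which trivially has ACCP, the absorbing element \(z=1/(n+1)\) is a non-unit with no factorisation. Indeed, \(\Sigma_n\) identifies the \(\circ_n\)-monoid with \((\Q,\cdot)\), which has no irreducibles at all. Excluding \(z\) is still not enough. In \(\nEl(\Q\times\Q)\) there are only four principal ideals, so ACCP holds. There the element \((0,1/(n+1))\) is a \(\circ_n\)-idempotent non-unit different from \(z\), and once again there are no irreducibles available. This is exactly the failure of your strictness step that you anticipate. The ring is not cancellative by Proposition~\ref{prop:cancel_char}, since \(\Sigma_n\) sends this element to the zero divisor \((1,0)\).

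The only place the paper actually proves a factorisation result is Theorem~\ref{thm:euclid_irred}, for \(\nEl(\Z)\), by descent on \(|\Sigma_n|\). Writing \(a_0=a_1\circ_n b_0\) with \(b_0\neq 0\) forces \(|\Sigma_n(a_1)|<|\Sigma_n(a_0)|\), using sharpness (Corollary~\ref{cor:sharp}). This is your argument specialised. In that case \(\nEl(\Z)\) is cancellative and has no absorbing element, so your hypotheses hold automatically. Your abstract use of ACCP is replaced there by an explicit well-founded size function, and via Lemma~\ref{lem:norm_div} that same function is also what proves ACCP for \(\nEl(\Z)\).

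Your version gains generality: it applies to any cancellative \(n\)-ary ring with ACCP. The norm descent gains a concrete termination measure that needs no appeal to choice. Both use only the \(\circ_n\)-monoid and never \(\ast_n\).
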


\subsection{Prime and maximal ideals}

    \begin{De}
      An ideal \(\p \subsetneq A\) is \emph{\(\circ_n\)-prime} if for all \(a, b \in A\),
      \[ a \circ_n b \in \p \implies a \in \p \,\text{ or }\, b \in \p. \]
      A non-unit element \(p \in A\) is \emph{\(\circ_n\)-prime} if \((p)\) is a prime
      ideal.
    \end{De}

    Note that the empty set is prime by default.

    \begin{Le}
      An ideal \(\p \subsetneq A\) is \(\circ_n\)-prime if and only if the complement \(A
      \setminus \p\) is closed under \(\circ_n\).
    \end{Le}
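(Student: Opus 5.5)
The lemma is really a statement about the underlying commutative monoid \((A,\circ_n,e)\); the operation \(\ast_n\) plays no role beyond guaranteeing that \(\p\) is an ideal in the first place. So the plan is to unfold the definition of \(\circ_n\)-prime and show that the defining implication is literally the contrapositive of closure of the complement. Throughout, \(\p\subsetneq A\) is a fixed \(n\)-ary ideal, so the properness condition is common to both sides and needs no discussion.

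For the forward direction, assume \(\p\) is \(\circ_n\)-prime and take \(a,b\in A\setminus\p\). If \(a\circ_n b\) were in \(\p\), the prime condition would force \(a\in\p\) or \(b\in\p\), which is a contradiction. Hence \(a\circ_n b\in A\setminus\p\), so the complement is closed under \(\circ_n\).

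For the converse, assume \(A\setminus\p\) is closed under \(\circ_n\) and suppose \(a\circ_n b\in\p\). If neither \(a\) nor \(b\) lay in \(\p\), then both would lie in the complement. Closure would then give \(a\circ_n b\in A\setminus\p\), which is again a contradiction. Thus \(a\in\p\) or \(b\in\p\).

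The only point deserving a remark, and the closest thing to an obstacle, is the edge cases. When \(\p=\emptyset\) the complement is all of \(A\), which is closed under \(\circ_n\); this matches the paper's convention that the empty set is prime. One might also ask whether ``closed under \(\circ_n\)'' should mean ``submonoid'', that is, whether it must also contain \(e\). I would note that, since \(\p\) is a proper ideal, \(e\notin\p\): otherwise (I2) would give \(a=a\circ_n e\in\p\) for every \(a\in A\), so \(\p=A\). Hence \(e\) automatically lies in the complement, and either reading of the statement holds.
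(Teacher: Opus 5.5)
Your proof is correct: the lemma is literally the contrapositive of the definition of a $\circ_n$-prime ideal, and that is how you argue; the paper states the lemma without any proof, treating it as this immediate unfolding. Your extra observation, that (I2) forces $e\notin\p$ for a proper ideal, is also correct: it shows the complement is even a submonoid, so either reading of ``closed'' works.
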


    This lemma basically states that we can consider localisations, and move towards
    algebraic geometry by considering \(n\)-ary ring schemes, among other
    continuations.

    \begin{De}
      An ideal \(\fm \subsetneq A\) is \emph{maximal} if there is no ideal \(I\) with
      \(\fm \subsetneq I \subsetneq A\).
    \end{De}

    \begin{Le}
      Let \(p \in A\) be \(\circ_n\)-irreducible. If \((p) \subseteq (a)\) for some \(a
      \notin A^\times\), then \((a) = (p)\).
    \end{Le}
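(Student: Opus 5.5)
From \((p)\subseteq(a)\) we have \(p\in(a)\), since \(p = p\circ_n e\in(p)\). So there is some \(r\in A\) with \(p = a\circ_n r\). We apply the definition of \(\circ_n\)-irreducibility to this factorisation: either \(a\in A^\times\) or \(r\in A^\times\). The first option is excluded by the hypothesis \(a\notin A^\times\), so \(r\) is a \(\circ_n\)-unit, say \(r\circ_n v = e\).

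Next we show the reverse inclusion \((a)\subseteq(p)\). From \(p = a\circ_n r\) we get, by associativity and commutativity of the monoid \((A,\circ_n,e)\),
\[
p\circ_n v \;=\; a\circ_n (r\circ_n v) \;=\; a\circ_n e \;=\; a .
\]
Hence \(a\in(p)\). Any element \(a\circ_n s\) of \((a)\) then equals \(p\circ_n(v\circ_n s)\in(p)\), so \((a)\subseteq(p)\). Combined with the hypothesis, this gives \((a)=(p)\). Equivalently, \(a\) and \(p\) are \(\circ_n\)-associates in the sense of the definition of units.

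Only the multiplicative monoid structure and the definitions of principal ideal, unit and irreducible are used; the \(n\)-ary operation \(\ast_n\) plays no role, and neither does cancellativity or an absorbing element. The one point needing care is that \(p\in(p)\) requires the identity \(e\), which is available since \(A\) is a monoid. I do not expect any real obstacle: the argument is the standard monoid-theoretic one.
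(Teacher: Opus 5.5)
Your proposal is correct and follows the paper's proof: write \(p = a \circ_n r\), use irreducibility together with \(a \notin A^\times\) to get \(r \in A^\times\), and conclude \((a) = (p)\). The paper stops at that point, while you also write out the reverse inclusion \((a) \subseteq (p)\) via \(a = p \circ_n v\).
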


    \begin{proof}
      By assumption \(p = a \circ_n r\) and \(a \notin A^\times\). The irreducibility of
      \(p\) now implies \(r \in A^\times\) and so, \((a) = (p)\).
    \end{proof}

    \begin{Pro}
      In an \(\circ_n\)-cancellative \(n\)-ary elliptic ring, every \(\circ_n\)-prime
      element is \(\circ_n\)-irreducible.
    \end{Pro}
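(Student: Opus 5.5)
The plan is to run the classical monoid argument, using only the \(\circ_n\)-monoid structure and the cancellativity from Definition~\ref{def:cancellative}. Let \(p\) be \(\circ_n\)-prime. By definition \(p\) is a non-unit, so it remains to show: if \(p = a\circ b\), then \(a\) or \(b\) is a \(\circ_n\)-unit.

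First I will show that \(p\neq z_A\). If an absorbing element \(z_A\) exists, it would be cancellative-exempt, and it also causes trouble because \((z_A)=\{z_A\}\). I would argue that a prime element is assumed different from the distinguished absorbing element, following Remark~\ref{rem:cancellative}, where ideals are required to contain \(z_A\) and \(z_A\) is treated like \(0\). If that convention is not available, I would restrict the statement to \(p\neq z_A\), exactly as one does for rings, where \(0\) is prime in a domain but not irreducible. I regard this bookkeeping as the main obstacle: making the handling of \(z_A\) consistent with the paper's conventions.

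Now the main step. Suppose \(p=a\circ b\). Then \(a\circ b\in(p)\), so primality gives \(a\in(p)\) or \(b\in(p)\); by symmetry say \(a\in(p)\). Write \(a=p\circ r\). Then
\[
p = a\circ b = p\circ r\circ b = p\circ (r\circ b),
\]
and also \(p = p\circ e\). Since \(p\neq z_A\), cancellativity gives \(r\circ b=e\). Hence \(b\) is a \(\circ_n\)-unit, so \(p\) is \(\circ_n\)-irreducible.

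Associativity and commutativity of \(\circ\) are used freely, since \((A,\circ,e)\) is a commutative monoid. The \(n\)-ary structure \(\ast_n\) plays no role at all; this is a purely multiplicative statement.
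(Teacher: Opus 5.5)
Your argument is the paper's: write \(a = p\circ_n r\), obtain \(p\circ_n(r\circ_n b) = p = p\circ_n e\), and cancel \(p\) to get \(r\circ_n b = e\). Your caveat about \(z_A\) improves on the paper, whose proof cancels \(p\) without checking \(p\neq z_A\); the exclusion also cannot be extracted from the paper's conventions, since in the \(\circ_n\)-cancellative ring \(\nEl(\Q)\) the element \(z = 1/(n+1)\) is \(\circ_n\)-prime (because \((z) = \{z\} = J((0))\)) yet \(z = z\circ_n z\) is not irreducible, so \(p\neq z_A\) has to be added as an explicit hypothesis, as in your second option.
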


    \begin{proof}
      If \(p = a \circ_n b\) and \(p \mid_n a\) (say \(a = p \circ_n c\)), we have \(p =
      p \circ_n (b \circ_n c)\). Since our monoid is cancellative, we can deduce \(e = b
      \circ_n c\). Subsequently \(b \in A^{\times}\).
    \end{proof}

\section{The \(n\)-ary elliptic rings \(\nEl(R)\)} \label{sec:nary_rings}

\subsection{Definition and first results} \label{sec:nary_rings_def}

    We introduce the following fundamental example of an \(n\)-ary ring.

    \begin{Pro}
      Let \(R\) be a ring. Define the following operations on \(R\):
      \[
        a_1 \ast \cdots \ast a_n \;:=\; 1 - \sum_{i=1}^n a_i,
        \qquad
        a \circ b \;:=\; a + b - (n+1)ab,
      \]
      and distinguish the element element \(e = 0 \in R\) as the \(\circ_n\)-unit. Then
      \((R, \ast, \circ, 0)\) is an \(n\)-ary elliptic ring, which we denote by
      \(\nEl(R)\).
    \end{Pro}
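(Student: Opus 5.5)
The plan is to split the verification into three independent parts: the \(n\)-ary group axioms for \(\ast\), the commutative monoid axioms for \(\circ\) with unit \(0\), and the distributivity law~\eqref{eq:dist}.

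\emph{The \(n\)-ary group part.} Here the operation \(a_1\ast\cdots\ast a_n = 1-\sum a_i\) is exactly the operation of Example~\ref{ex:main}, applied to the additive group \((R,+)\) with \(g=1\). So \((R,\ast)=\nEl_1(R,+)\) satisfies (EG1)--(EG3) by that example, and nothing further needs checking.

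\emph{The monoid part.} Commutativity of \(\circ\) is evident from the formula. The unit law is \(a\circ 0 = a+0-0=a\). For associativity, the cleanest route is the substitution \(\phi(a):=1-(n+1)a\). It satisfies
\[
\phi(a)\phi(b)=1-(n+1)a-(n+1)b+(n+1)^2ab=\phi(a\circ b).
\]
Direct expansion also works: both \((a\circ b)\circ c\) and \(a\circ(b\circ c)\) equal
\[
a+b+c-(n+1)(ab+bc+ca)+(n+1)^2abc,
\]
which is symmetric in \(a,b,c\). I will present this direct expansion, since \(\phi\) need not be injective when \(n+1\) is a zero divisor in \(R\).

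\emph{Distributivity.} This is the only step that genuinely uses the specific constant \(n+1\), so I expect it to be the main point. Set \(s=\sum_{i=1}^n a_i\). The left-hand side is
\[
(1-s)\circ b = 1-s+b-(n+1)(1-s)b = 1-s-nb+(n+1)sb.
\]
The right-hand side is
\[
1-\sum_i\bigl(a_i+b-(n+1)a_ib\bigr) = 1-s-nb+(n+1)sb.
\]
The two agree. The \(-nb\) on the left arises as \(b-(n+1)b\), and on the right it comes from the \(n\) copies of \(b\); this matching is precisely why the coefficient in \(\circ\) must be \(n+1\). The other-sided distributivity then follows from commutativity of \(\circ\), as in the Remark after the definition of \(n\)-ary rings.
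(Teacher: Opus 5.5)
Your proposal is correct and follows essentially the same route as the paper: the $n$-ary group axioms are inherited from Example~\ref{ex:main} with $g=1$. Associativity comes from expanding $(a\circ b)\circ c$ into an expression symmetric in $a,b,c$, and distributivity from computing both sides as $1-s-nb+(n+1)sb$. Your reason for preferring the direct expansion to the substitution $\phi(a)=1-(n+1)a$ is sound, since $\phi$ is injective only when $n+1$ is a non-zero-divisor in $R$.
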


    \begin{proof}
      We already know that \((\nEl(R), \ast_n)\) an \(n\)-ary group by Example
      \ref{ex:main}. It remains to show that \((\nEl(R), \circ_n, 0)\) is a monoid and
      that \(\ast_n\) distributes over \(\circ_n\).

      That \((\nEl(R), \circ_n, 0)\) is a monoid is pretty clear, as commutativity and
      unit axioms are immediate and for associativity we note that
      \[
        (a \circ b) \circ c \;=\; a + b + c - (n+1)ab - (n+1)ac - (n+1)bc + (n+1)^2 abc,
      \]
      is symmetric in \(a, b, c\).

      For distributivity, let \(S = \sum_{i=1}^n a_i\). Then
      \begin{align*}
        (1 - S) \circ b & = 1 - S + b - (n+1)(1-S)b = 1 - S - nb + (n+1)Sb,
      \end{align*}
      and
      \begin{align*}
        \bigast_{i=1}^n (a_i \circ b) & = 1 - \sum_{i=1}^n (a_i + b - (n+1)a_ib)
        = 1 - S - nb + (n+1)Sb.
      \end{align*}
      These are equal and so, our distributivity condition \eqref{eq:dist} holds.
    \end{proof}

    By defining \(\nEl(f): \nEl(R) \to \nEl(S)\) in the obvious (pointwise) way from an
    \(n\)-ary ring homomorphism \(f: R \to S\), we note that
    \[ R \mapsto \nEl(R) \]
    is functorial.

\subsection{On the role of \(1\) in the definition of \(\nEl(R)\)}

    In the definition of a standard \(n\)-ary group, we could choose any element \(g\in
    G\) and obtain an \(n\)-ary group \(\nEl_g(G)\). It is thus a natural question to
    ask if \(1\) in our definition of \(n\)-ary rings is needed. As one might expect,
    \(1\) being the multiplicative unit is not a mere coincidence in this case, but it
    is not forced either, as we will show in our discussion here. However, it is a
    sensible choice, at least until we move to consider not a single \(n\)-ary ring
    structure but a full poset. This is, however, left for a future paper.

    Let \(R\) be a ring and \(r \in R\) any element. Set
    \[ a_1 \ast \cdots \ast a_n \;:=\; r - \sum_{i=1}^n a_i. \]
    This is an \(n\)-ary Group by Example~\ref{ex:main}. However, as soon as one wishes
    to equip \(R\) with a compatible \(\circ_n\)-multiplication and thereby obtain an
    \(n\)-ary elliptic \emph{ring}, the choice of \(r\) becomes constrained.

\subsubsection{The role of \(r\) in the \(n\)-ary ring case}
      \label{sec:role_of_r_nary_rings}

      The key constraint for us to define a binary operation \(a\circ b\) of the
      general form \(a + b - \lambda ab,\ \lambda\in R\) comes form \(\ast\) having to
      distribute over \(\circ\).

      A direct computation gives
      \begin{align*}
        \text{LHS of \eqref{eq:dist}} & = \Bigl(r - \sum_{i=1}^n a_i\Bigr)\circ b = r - S + b - \lambda\bigl(r-S\bigr)b  = r - S + b\bigl(1-\lambda r + \lambda S\bigr), \\[4pt]
        \text{RHS of \eqref{eq:dist}} & = \bigast_{i=1}^n(a_i\circ b)        = r - \sum_{i=1}^n\bigl(a_i + b - \lambda a_i b\bigr)        = r - S - nb + \lambda Sb,
      \end{align*}
      where \(S = \sum a_i\). Comparing the coefficients of \(b\) (the terms not
      involving \(S\)) gives \(1 - \lambda r = -n\), that is to say
      \[ \lambda r \;=\; n+1. \]
      It follows that for any unit \(r\in R^{\times}\), we may set \(\lambda =
      (n+1)r^{-1}\). However, the resulting \(n\)-ary elliptic ring
      \((R,\ast_r,\circ_\lambda,0)\) is going to be isomorphic to the standard
      (\(\lambda = n+1\)) one, as \(a\mapsto ra\) will give us an \(n\)-ary ring
      isomorphism.

      \medskip
      For non-invertible \(\lambda\), however, things are not so simple. We always have
      a \emph{dual} \(n\)-ary ring, given by
      \[ r \;=\; n+1 \,\qtext{and}\, \lambda \;=\; 1. \]
      This is in some sense the ``least'' \(n\)-ary ring with our default one
      (\(\lambda = n+1\)) being the ``greatest''. The other can be defined as the
      divisors of \(n+1\). This will give a lattice structure by divisibility and can
      be seen as a sort of ``sheaf'' in the \(\mathbb{F}_1\) setting (for this, we
      mention without going in detail that a sheaf in a category \(\mathbb{C}\) on an
      affine monoid scheme, is a functor form a lattice to \(\mathbb{C}\).

      \medskip\noindent
      This direction is not yet explored in our current work and a task for future
      papers.

\subsection{The norm map}

    One of our most important tools for studying \(\nEl(R)\) is the following monoid
    homomorphism, as it allows us to descend into the underlying ring \(R\).

    \begin{De}[Norm]
      We define the \emph{norm} map (or simply norm) \(\Sigma_n: \nEl(R) \to R\) by
      \[ \Sigma_n(a) \;:=\; 1 - (n+1)a. \]
    \end{De}

    \begin{Le} \label{lem:mult}
      For any ring \(R\), the norm map \(\Sigma_n: \nEl(R) \to R\) is a monoid
      homomorphism. That is, for all \(a, b \in R\),
      \[
        \Sigma_n(a \circ_n b)
        \;=\; \Sigma_n(a) \cdot \Sigma_n(b), \qquad \Sigma_n(0) \;=\; 1.
      \]
    \end{Le}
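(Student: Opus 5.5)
The plan is a direct computation from the definitions of \(\Sigma_n\) and \(\circ_n\) in \(\nEl(R)\). First, the unit condition is immediate: \(\Sigma_n(0) = 1 - (n+1)\cdot 0 = 1\), the multiplicative unit of \(R\).

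For multiplicativity, I will expand both sides as polynomials in \(a,b\) with coefficients in \(\Z\), which is legitimate since \(R\) is commutative with unit. The left-hand side is
\[
  \Sigma_n(a\circ_n b) \;=\; 1 - (n+1)\bigl(a + b - (n+1)ab\bigr) \;=\; 1 - (n+1)a - (n+1)b + (n+1)^2ab.
\]
The right-hand side is
\[
  \bigl(1-(n+1)a\bigr)\bigl(1-(n+1)b\bigr) \;=\; 1 - (n+1)a - (n+1)b + (n+1)^2ab.
\]
These two expressions agree term by term, which gives the claim.

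Conceptually, this reflects that \(\Sigma_n\) is the affine change of variables \(x \mapsto 1-(n+1)x\). It transports \(\circ_n\) to ordinary multiplication, and sends the \(\circ_n\)-unit \(0\) to \(1\). The same viewpoint explains why \(1/(n+1)\), when it exists, is absorbing: it is the preimage of \(0\).

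There is no real obstacle here. The only points to watch are that \(n+1\) is read as an element of \(R\) via \(\Z\to R\), and that commutativity of \(R\) is used to collect the \(ab\) terms. No invertibility of \(n+1\) is required, so the lemma holds for every ring \(R\).
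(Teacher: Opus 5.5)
Your proof is correct and is the same direct computation the paper uses: expand $\Sigma_n(a\circ_n b) = 1-(n+1)\bigl(a+b-(n+1)ab\bigr)$, recognise it as $\bigl(1-(n+1)a\bigr)\bigl(1-(n+1)b\bigr)$, and note that $\Sigma_n(0)=1$ by definition. One minor remark: matching the $ab$ terms does not actually require commutativity of $R$, since $n+1$ is the image of an integer and hence central.
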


    \begin{proof}
      Direct computation gives us
      \begin{align*}
        \Sigma_n(a \circ_n b) & = 1 - (n+1)(a + b - (n+1)ab)                   \\
                              & = 1 - (n+1)a - (n+1)b + (n+1)^2 ab             \\
                              & = \bigl(1 - (n+1)a\bigr)\bigl(1 - (n+1)b\bigr) \\
                              & = \Sigma_n(a) \cdot \Sigma_n(b).
      \end{align*}
      By definition, \(\Sigma_n(0) = 1\).
    \end{proof}

    \begin{Le}[Sum identity] \label{lem:sum}
      For all \(a \in R\),
      \[ \Sigma_n(a) + \Sigma_n(1 - a) \;=\; 1 - n. \]
    \end{Le}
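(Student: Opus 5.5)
The sum identity should follow from a direct computation with the definition of the norm, in the same spirit as the proof of Lemma~\ref{lem:mult}. I will expand both terms using \(\Sigma_n(x) = 1 - (n+1)x\):
\[
  \Sigma_n(a) \;=\; 1 - (n+1)a,
  \qquad
  \Sigma_n(1-a) \;=\; 1 - (n+1)(1-a) \;=\; -n + (n+1)a.
\]
Adding these, the terms \(\mp(n+1)a\) cancel and leave \(1 - n\). This uses only the ring axioms of \(R\), namely distributivity and commutativity of \(+\), so it holds for an arbitrary ring \(R\) with no hypothesis on whether \(n+1\) is invertible.

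It is also worth recording the conceptual reading of the identity, since it is presumably how it will be used later. The element \(1-a\) equals the \(\ast_n\)-product \(a \ast 0 \ast \cdots \ast 0\), with \(n-1\) copies of \(0\). So the identity says that the norm, which is multiplicative for \(\circ_n\) by Lemma~\ref{lem:mult}, is affine with respect to \(\ast_n\). More generally,
\[
  \Sigma_n(a_1 \ast \cdots \ast a_n) \;=\; 1 - (n+1)\Bigl(1 - \sum_i a_i\Bigr) \;=\; \sum_{i=1}^n \Sigma_n(a_i) - n \cdot 1 + (1 - n - 1) + n,
\]
which is the same one-line expansion. I would mention this only as a remark and keep the proof to the two-line computation above.

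There is no real obstacle here. The only point that needs care is the sign bookkeeping in \(\Sigma_n(1-a)\), where \(-(n+1)(1-a)\) must be expanded as \(-(n+1) + (n+1)a\).
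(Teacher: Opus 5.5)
Your two-line computation is correct and is exactly the paper's proof; the paper only groups the constants as \(2-(n+1)\).

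The general formula in your side remark is wrong, although the proof does not depend on it. Expanding gives \(\Sigma_n(a_1\ast\cdots\ast a_n) = -n+(n+1)\sum_i a_i = -\sum_{i=1}^n\Sigma_n(a_i)\), as in the proof of Proposition~\ref{prop:J_ideal}. Your right-hand side instead simplifies to \(\sum_i\Sigma_n(a_i)-n\), which already fails for \(a_1=\cdots=a_n=0\): it gives \(0\) rather than \(\Sigma_n(1)=-n\). It also would not recover the lemma via \(1-a=a\ast 0\ast\cdots\ast 0\). The correct formula gives \(\Sigma_n(1-a)=-\Sigma_n(a)-(n-1)\), which is exactly the sum identity.
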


    \begin{proof}
      \(\Sigma_n(a) + \Sigma_n(1-a) = (1-(n+1)a) + (1 - (n+1)(1-a)) = 2 - (n+1) = 1-n\).
    \end{proof}

\subsection{On the invertibility of \(n+1\)}

    \begin{Le} \label{lem:sigma_image}
      If \(n+1 \neq 0\) in \(R\), then \(\Sigma_n: R \to R\) is injective, with image
      \(\im(\Sigma_n) = 1 + (n+1)R\).
    \end{Le}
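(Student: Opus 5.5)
The image statement is immediate from the definition, so the plan is to dispose of it first. By definition \(\Sigma_n(a) = 1 - (n+1)a = 1 + (n+1)(-a)\), so \(\im(\Sigma_n) \subseteq 1 + (n+1)R\). Conversely, given \(1 + (n+1)r\), the element \(a = -r\) maps onto it, so the two sets agree. This part uses no hypothesis on \(n+1\) at all.

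For injectivity, I will reduce to the underlying additive structure. Since \(\Sigma_n\) is affine, \(\Sigma_n(a) = \Sigma_n(b)\) holds if and only if
\[ (n+1)(a-b) \;=\; 0 . \]
So \(\Sigma_n\) is injective exactly when multiplication by \(n+1\) on \(R\) has trivial kernel, that is, when \(n+1\) is a non-zero-divisor of \(R\). When that holds, \(a - b = 0\) follows at once.

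The main obstacle is that the hypothesis as literally written, \(n+1 \neq 0\), is weaker than what this argument needs, and the argument cannot be repaired. Take \(R = \Z/8\Z\) and \(n = 3\): then \(n+1 = 4 \neq 0\), yet \(\Sigma_3(2) = 1 - 8 = 1 = \Sigma_3(0)\), so \(\Sigma_3\) is not injective. I would therefore prove the lemma under the hypothesis that \(n+1\) is a non-zero-divisor in \(R\), which I take to be the intended reading. This covers every case the paper uses: \(R\) an integral domain such as \(\Z\), and also the case where \(n+1\) is invertible, as in \(\Z_{n+1}\). In the invertible case the image is then all of \(R\), and \(\Sigma_n\) is in fact a bijection \(\nEl(R) \to R\), with inverse \(c \mapsto (1-c)/(n+1)\). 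I would record this as a remark, since it is the bijection later used to transport structure via \(J\).
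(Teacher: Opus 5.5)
Your proof is correct and follows the paper's route: the image comes from writing \(1+(n+1)r=\Sigma_n(-r)\), and injectivity is reduced to \((n+1)(a-b)=0\), at which point the paper concludes \(a=b\) from \(n+1\neq 0\) alone---precisely the invalid step you identified. Your counterexample \(R=\Z/8\Z\), \(n=3\) is valid, and so is the paper's own later example \(R=\Z/9\Z\), \(n=2\), where \(\Sigma_2(0)=\Sigma_2(3)=1\); hence the correct hypothesis is that \(n+1\) is a non-zero-divisor, which holds wherever the paper actually uses injectivity (\(R=\Z\), integral domains, \(n+1\in R^\times\)), while the image statement needs no hypothesis at all.
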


    \begin{proof}
      Let \(\Sigma_n(a) = \Sigma_n(b)\). Then \((n+1)(a-b) = 0\) and so, \(a = b\) since
      \(n+1\neq 0\) in \(R\) and injectivity follows.

      The image satisfies \(1 - (n+1)a \equiv 1 \pmod{(n+1)R}\), so \(\im(\Sigma_n)
      \subseteq 1 + (n+1)R\). Conversely, given \(m \in 1+(n+1)R\), by definition \(m =
      1-(n+1)a\) for some \(a\in R\), and then \(\Sigma_n(a) = m\). Hence \(\im(\Sigma_n)
      = 1+(n+1)R\).
    \end{proof}

    On the opposite side of Lemma~\ref{lem:sigma_image}, we have the following trivial
    observation.

    \begin{Pro}
      Let \(n+1 = 0\) in \(R\). Then \(\circ_n\) agrees with the addition of \(R\).
      Meaning
      \[ (R, \circ_n, 0) \simeq (R, +, 0) \]
      is a monoid isomorphism and in particular, every element is a \(\circ_n\)-unit.
    \end{Pro}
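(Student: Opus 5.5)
The plan is to compute \(a \circ_n b\) directly when \(n+1 = 0\) in \(R\). By definition \(a \circ_n b = a + b - (n+1)ab\). Since \(n+1 = 0\) in \(R\), the product \((n+1)ab = (n+1)\cdot ab\) vanishes: it is the integer multiple \((n+1)\cdot 1_R\) times \(ab\), and \((n+1)\cdot 1_R = 0\). Hence \(a \circ_n b = a + b\) for all \(a,b\in R\). The \(\circ_n\)-unit is \(0\) by the Proposition defining \(\nEl(R)\), and this is also the additive unit.

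It follows that the identity map \(\mathrm{id}_R\colon (R,\circ_n,0)\to(R,+,0)\) is a bijection preserving both the operation and the unit. It is therefore a monoid isomorphism, which proves the displayed claim.

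For the final assertion, take \(a\in R\). Its additive inverse \(-a\) satisfies
\[
a\circ_n(-a) = a + (-a) = 0 = e.
\]
So \(a\) is a \(\circ_n\)-unit in the sense of the Definition of units. Thus \(\nEl(R)^\times = R\), and \((R,\circ_n,0)\) is in fact an abelian group.

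There is essentially no obstacle here. The only point to note is that "\(n+1=0\) in \(R\)" means the image of the integer \(n+1\) in \(R\) is zero. That is exactly what is needed to kill the term \((n+1)ab\), since \(R\) is a commutative ring with unity and integer multiples act through \(\Z\to R\).

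As a side remark consistent with Lemma~\ref{lem:mult}, the norm \(\Sigma_n(a) = 1-(n+1)a\) is then constantly \(1\). This confirms that \(\Sigma_n\) carries no information in this case, which is why the case is trivial.
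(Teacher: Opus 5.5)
Your proposal is correct and is exactly the direct computation the paper has in mind. The paper states this as a trivial observation without proof: once \((n+1)=0\) in \(R\), the term \((n+1)ab\) in \(a\circ_n b\) vanishes, so the identity map is a monoid isomorphism and \(-a\) serves as the \(\circ_n\)-inverse of \(a\).
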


    As such, while we regard \(\circ_n\) as the ``multiplication" and \(\ast_n\) as the
    ``addition'', in this case, things take a bit of a turn.

    From the arithmetic point of view, this case does not seem that interesting, and
    subsequently, we will focus on the case when \(n+1\) is not \(0\).

    We also describe the case when \(n+1\in R^\times\) is invertible, which is much
    more interesting to us.

    \begin{Le} \label{le:zero}
      Let \(n+1\in R^\times\) be invertible and denote \(z = 1 / (n+1)\). For every
      \(a\in R\), we have
      \begin{itemize}
        \item[(i)] \(a\circ_n z = z\);
        \item[(ii)] \(z\ast \cdots \ast z = z\);
        \item[(iii)] \(\Sigma_n(z) = 0\);
      \end{itemize}
    \end{Le}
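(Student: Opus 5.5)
All three claims follow by direct computation from the definitions of \(\circ_n\), \(\ast_n\) and \(\Sigma_n\) in \(\nEl(R)\), using only \((n+1)z = 1\). Alternatively one can route (i) and (ii) through the norm. However, \(\Sigma_n\) is injective only when \(n+1\) is a non-zero-divisor, which does hold here since \(n+1\in R^\times\), so either route works. I will do the direct computations, since they avoid even that check.

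For (iii), I compute \(\Sigma_n(z) = 1-(n+1)z = 1-1 = 0\).

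For (i), I expand
\[
  a\circ_n z \;=\; a + z - (n+1)az \;=\; a + z - a \;=\; z,
\]
using \((n+1)z=1\). As a cross-check via Lemma~\ref{lem:mult}, \(\Sigma_n(a\circ_n z) = \Sigma_n(a)\Sigma_n(z) = 0 = \Sigma_n(z)\), and injectivity of \(\Sigma_n\) (Lemma~\ref{lem:sigma_image}, as \(n+1\neq 0\)) gives the same conclusion.

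For (ii), I compute
\[
  z\ast\cdots\ast z \;=\; 1 - nz \;=\; (n+1)z - nz \;=\; z,
\]
again rewriting \(1\) as \((n+1)z\).

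There is no genuine obstacle. The only point requiring care is to use \(1=(n+1)z\) as an identity in \(R\), rather than dividing by \(n+1\) in some ambient field. This is legitimate precisely because \(n+1\) is assumed to be a unit. It is also worth remarking that (i) shows \(z\) is the absorbing element of the \(\circ_n\)-monoid, as anticipated in Remark~\ref{rem:cancellative}, and (ii) shows it is idempotent for \(\ast_n\).
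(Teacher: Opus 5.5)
Your proposal is correct and takes essentially the same approach as the paper: both prove (i)--(iii) by direct expansion of \(\circ_n\), \(\ast_n\) and \(\Sigma_n\), using only \((n+1)z = 1\). The norm-based cross-check you give for (i) is a valid alternative but is not needed, and the paper does not use it.
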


    \begin{proof}
      (i) \(a\circ_n z = a + z - (n+1)az = a + \frac{1}{n+1} - a = z\).

      (ii) \( z\ast \cdots \ast z = 1 - n\cdot\frac{1}{n+1} = \frac{(n+1) - n}{n+1} =
      \frac{1}{n+1} = z.\)

      (iii) \(\Sigma_n(z) = 1 - (n+1)/(n+1) = 1 - 1 = 0\).
    \end{proof}

    \begin{Rem} \label{rem:n+1_invertible}
      In esssense, \(1/(n+1)\), if it exists, plays the role of the absorbing element in
      our \(n\)-ary ring. Note that these will be respected by morphisms, however, an
      ideal need not contain it. It would be better to require ideals to contain this
      element (if it exists), but is not vital. It is a special subcategory and should be
      treated as such. This discussion can be found in Section~\ref{sec:localisation}.
    \end{Rem}

\subsection{Product and localisation}

    Due to the functorial and pointwise nature of the construction \(\nEl(R)\), as well
    as the fact that localisation is purely monoidal and \(\circ_n\) is a classical
    commutative monoid, the following propositions are immediate and clear:

    \begin{Pro}
      For rings \(R\) and \(S\), there is a natural isomorphism of \(n\)-ary elliptic
      rings
      \[ \nEl(R \times S) \cong \nEl(R) \times \nEl(S), \]
      where the operations \(\ast_n\) and \(\circ_n\) in the RHS are defined
      componentwise.
    \end{Pro}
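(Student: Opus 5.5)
The natural candidate for the isomorphism is the map induced by the two ring projections. Let \(\pi_R: R\times S\to R\) and \(\pi_S: R\times S\to S\) be the projections. I would take
\[
  \Phi:\; \nEl(R\times S)\to \nEl(R)\times\nEl(S), \qquad (r,s)\mapsto (r,s).
\]
This is the identity on underlying sets, so bijectivity is automatic. It also equals \((\nEl(\pi_R),\nEl(\pi_S))\), and naturality then comes directly from the functoriality of \(R\mapsto\nEl(R)\) noted after the definition of \(\nEl(R)\). What remains is to check that \(\Phi\) preserves \(\ast_n\), \(\circ_n\) and the unit \(0\). Since \(\Phi\) is bijective, preserving the operations also gives that \(\Phi^{-1}\) is a morphism.

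First, \(\ast_n\). In \(R\times S\) the unit is \((1,1)\) and addition is componentwise. Hence
\[
  (r_1,s_1)\ast\cdots\ast(r_n,s_n) \;=\; (1,1)-\sum_i (r_i,s_i) \;=\; \Bigl(1-\sum_i r_i,\; 1-\sum_i s_i\Bigr),
\]
which is exactly the componentwise \(\ast_n\) on \(\nEl(R)\times\nEl(S)\).

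Second, \(\circ_n\). The element \(n+1\) of \(R\times S\) is \((n+1,n+1)\), and multiplication is componentwise. Hence
\[
  (a,b)\circ(a',b') \;=\; \bigl(a+a'-(n+1)aa',\; b+b'-(n+1)bb'\bigr),
\]
again the componentwise \(\circ_n\). Finally, the \(\circ_n\)-unit of \(\nEl(R\times S)\) is \(0=(0,0)\), which is the pair of units \((0,0)\) on the right.

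For naturality in \(R\) and \(S\), take ring maps \(f:R\to R'\) and \(g:S\to S'\). Then \(\nEl(f\times g)\) and \(\nEl(f)\times\nEl(g)\) are the same pointwise map \((r,s)\mapsto(f(r),g(s))\), so the naturality square commutes trivially.

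I don't expect a real obstacle here. The only point needing care is that the constants \(1\) and \(n+1\) in the formulas for \(\nEl(R\times S)\) are the diagonal elements \((1,1)\) and \((n+1,n+1)\), so the formulas split componentwise.
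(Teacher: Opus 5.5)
Your proposal is correct and matches the paper's approach. The paper gives no written proof: it declares the isomorphism immediate from the pointwise nature of \(\nEl\). You have simply written out that componentwise check, correctly noting that the constants \(1\) and \(n+1\) in \(R\times S\) are the diagonal elements \((1,1)\) and \((n+1,n+1)\).
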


    \begin{Pro}
      Let \(R\) be a ring and \(T \subseteq R\) a multiplicatively closed subset. Then
      \[ \nEl(T^{-1}R) \cong T^{-1}\nEl(R), \]
      where \(T^{-1}\nEl(R)\) is the localisation of \(\nEl(R)\) with \(T\).
    \end{Pro}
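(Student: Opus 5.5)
The plan is to show that the set-theoretic identity map \(\varphi\colon T^{-1}R\to T^{-1}\nEl(R)\) (once both sides are correctly identified) is an isomorphism of \(n\)-ary rings. The first task is to make precise what \(T\) means on the \(\nEl(R)\) side. Here \(T\) is multiplicatively closed for the ring multiplication of \(R\), whereas localisation of \(\nEl(R)\) is taken with respect to the monoid \((\nEl(R),\circ_n,0)\). I will therefore interpret \(T^{-1}\nEl(R)\) as the localisation at \(T' := \Sigma_n^{-1}(T)\subseteq \nEl(R)\). By Lemma~\ref{lem:mult}, \(\Sigma_n\) is a monoid homomorphism with \(\Sigma_n(0)=1\), so \(T'\) is a \(\circ_n\)-submonoid. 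For elements of the form \(t = 1-(n+1)s\), we have \(s\in T'\) exactly when \(\Sigma_n(s)=t\).

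Next I define the comparison map. By functoriality of \(R\mapsto\nEl(R)\), the localisation map \(R\to T^{-1}R\) induces an \(n\)-ary ring morphism \(\iota\colon\nEl(R)\to\nEl(T^{-1}R)\). For \(s\in T'\), the element \(\iota(s)\) is a \(\circ_n\)-unit. Indeed, its norm \(\Sigma_n(s)\in T\) is invertible in \(T^{-1}R\), and I will check directly that
\[
  v \;=\; \frac{-s}{\Sigma_n(s)}
\]
satisfies \(s\circ_n v = 0\). Expanding gives \(s+v-(n+1)sv = s + v\,\Sigma_n(s) = s - s = 0\). By the universal property of monoid localisation, \(\iota\) then factors through a monoid map \(\Phi\colon T'^{-1}\nEl(R)\to\nEl(T^{-1}R)\). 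The \(n\)-ary operation on the localisation is defined by bringing fractions to a common denominator. I will verify that \(\Phi\) respects \(\ast_n\) using the distributivity \eqref{eq:dist} together with the fact that \(\iota\) is an \(n\)-ary morphism.

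The remaining step is bijectivity, and I expect this to be the main obstacle. Surjectivity requires writing an arbitrary \(x/t\in T^{-1}R\) as \(a\circ_n v\) with \(a\in\nEl(R)\) and \(v\) the \(\circ_n\)-inverse of some \(s\in T'\). On norms this reads \(\Sigma_n(x/t) = \Sigma_n(a)/\Sigma_n(s)\). The natural approach is to take \(s\) with \(\Sigma_n(s)\) a suitable element of \(T\) and solve for \(a\). However, solving requires \(t\) (or some multiple of it) to be of the form \(1-(n+1)s\), i.e.\ congruent to \(1\bmod (n+1)\), and when \(n+1\notin R^\times\) this can fail. So I would first attempt the proof when \(n+1\in R^\times\). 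In that case \(\Sigma_n\) is a bijection \(R\to R\) by Lemma~\ref{lem:sigma_image}, so \(T'\) corresponds exactly to \(T\), and both sides are identified with the monoid \(T^{-1}R\) via \(\Sigma_n\), with injectivity following from injectivity of \(\Sigma_n\) on \(T^{-1}R\). For general \(R\), I would try replacing \(T\) by its saturation and using that \(t\mid t'\) for some \(t'\in T\cap(1+(n+1)R)\). If that fails, I would restrict the statement to \(T\subseteq 1+(n+1)R\), which is the case where the paper's "immediate" claim genuinely holds.
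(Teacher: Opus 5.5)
The step you flag as the main obstacle cannot be overcome. The proposition is false as stated (with \(T\) multiplicatively closed for the ring multiplication, which is what \(T^{-1}R\) requires). So your plan cannot be completed in general, and your fallback restriction is necessary.

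With your reading \(T'=\Sigma_n^{-1}(T)\), take \(R=\Z\), \(n=2\), \(T=\{3^k : k\geq 0\}\).
\begin{itemize}
\item Since \(\Sigma_2(a)=1-3a\equiv 1 \pmod 3\), we get \(T'=\Sigma_2^{-1}(\{1\})=\{0\}\). The localisation is therefore just \(\nEl(\Z)\), whose only \(\circ_2\)-unit is \(0\) by Lemma~\ref{lem:units}.
\item On the other side, \(T^{-1}\Z=\Z[1/3]\) and \(\Sigma_2(-2/3)=3\in\Z[1/3]^\times\). By Lemma~\ref{le:units_general}, \(-2/3\) is a non-trivial \(\circ_2\)-unit of \(\nEl(\Z[1/3])\).
\end{itemize}
So the two sides are not even isomorphic as monoids. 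The same happens for \(n=5\), \(T=\{2^k\}\), where \(n+1\) stays non-invertible but \(\Sigma_5(-1/2)=4\) is a unit.

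In these examples no \(t\neq 1\) in \(T\) divides an element of \(T\cap(1+(n+1)\Z)=\{1\}\), so your saturation idea cannot rescue the general case. The other reading, localising at the \(\circ_n\)-submonoid generated by \(T\), fails already for \(T=\{1\}\subseteq\Z\): then \(1\) becomes a \(\circ_n\)-unit, while \(\nEl(\Z)\) is sharp.

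The paper gives no argument beyond calling the proposition immediate, because localisation is monoidal and \(\nEl\) is pointwise. That overlooks exactly what you found: \(T\) is closed under \(\cdot\) rather than \(\circ_n\), and \(\Sigma_n\) only reaches \(1+(n+1)R\).

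What your comparison map does prove is the statement under an extra hypothesis: every \(t\in T\) divides some element of \(T_1:=T\cap(1+(n+1)R)\). This holds automatically if \(n+1\in R^\times\), if \(T\subseteq 1+(n+1)R\), or for \(R=\Z\) when all elements of \(T\) are prime to \(n+1\). Under it, \(T^{-1}R=T_1^{-1}R\) and \(T'=\Sigma_n^{-1}(T_1)\), so you may assume \(T=T_1\).

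You can then finish without norms. For \(s,s'\in T'\) with \(t=\Sigma_n(s)\) and \(t'=\Sigma_n(s')\), one has
\[
  a\circ_n\Bigl(\frac{-s}{t}\Bigr)=\frac{a-s}{t},
  \qquad
  (a-s)\,t'-(b-s')\,t \;=\; a\circ_n s'-b\circ_n s .
\]
\begin{itemize}
\item \emph{Surjectivity.} Given \(x/t\in T^{-1}R\), choose \(s\) with \(\Sigma_n(s)=t\); then \(s\in T'\). The first identity shows that the class of \((x+s,s)\) maps to \(x/t\).
\item \emph{Injectivity.} Equal images give \(u\bigl((a-s)t'-(b-s')t\bigr)=0\) for some \(u\in T\). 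Write \(u=\Sigma_n(w)\) with \(w\in T'\). The second identity and Lemma~\ref{lem:cancel_sigma} then yield \(w\circ_n a\circ_n s'=w\circ_n b\circ_n s\), which is equality of the classes of \((a,s)\) and \((b,s')\).
\end{itemize}

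This route avoids relying on injectivity of \(\Sigma_n\). That injectivity fails when \(n+1\) is a zero-divisor, despite the wording of Lemma~\ref{lem:sigma_image}: for example \(\Sigma_2(0)=\Sigma_2(2)\) in \(\Z/6\Z\). In your case \(n+1\in R^\times\) it is harmless.
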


\subsection{The norm-ideal correspondence when coming from a ring}
    \label{sec:ideals_nEl}

    While the ideals of \(\nEl(R)\) are not simply reducible to ideals of \(R\), they
    are nonetheless strongly controlled by the norm map \(\Sigma_n\), as it offers a
    systematic way to pass between classical ideals of \(R\) and \(n\)-ary elliptic
    ideals of \(\nEl(R)\).

    For example, it characterises principal ideals completely:

    \begin{Le}[Norm divisibility] \label{lem:norm_div}
      Let \(R\) be a ring. For \(p, a \in R\), we have
      \[
        (a)\subseteq (p) \,\text{ in }\, \nEl(R)
        \;\iff\; (\Sigma_n(a))\subseteq (\Sigma_n(p)) \,\text{ in }\, R.
      \]
      Moreover, in that case \(\Sigma_n(a) = \Sigma_n(p) \cdot k\) with \(k \in
      \im(\Sigma_n) = 1 + (n+1)R\), with the explicit representative being \(r = a -
      pk\).
    \end{Le}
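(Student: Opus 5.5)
The plan is to prove the two implications separately, using only that \(\Sigma_n\) is multiplicative (Lemma~\ref{lem:mult}) and that it is injective with image \(1+(n+1)R\) (Lemma~\ref{lem:sigma_image}). As stated, the backward implication needs \(n+1\) to be a non-zero-divisor in \(R\) (e.g.\ \(R=\Z\), the case of interest), not merely \(n+1\neq 0\). I will add that hypothesis explicitly; I do not see how to remove it.

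\emph{Forward direction.} Suppose \((a)\subseteq(p)\) in \(\nEl(R)\). Then \(a=p\circ_n r\) for some \(r\in R\). Applying Lemma~\ref{lem:mult} gives
\[
  \Sigma_n(a)=\Sigma_n(p)\cdot\Sigma_n(r).
\]
Hence \((\Sigma_n(a))\subseteq(\Sigma_n(p))\) in \(R\). This also exhibits the factor \(k=\Sigma_n(r)\), which lies in \(\im(\Sigma_n)=1+(n+1)R\).

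\emph{Backward direction.} Suppose \(\Sigma_n(a)=\Sigma_n(p)\,k\) for some \(k\in R\). The first step is to show that \(k\) automatically lies in \(1+(n+1)R\). Both norms are congruent to \(1\) modulo \((n+1)R\), so reducing the equation modulo \((n+1)R\) gives
\[
  1\equiv \Sigma_n(p)\,k\equiv 1\cdot k \pmod{(n+1)R}.
\]
Thus \(k\equiv 1\), and we may write \(k=1-(n+1)r=\Sigma_n(r)\) for some \(r\in R\). Lemma~\ref{lem:mult} then gives
\[
  \Sigma_n(p\circ_n r)=\Sigma_n(p)\Sigma_n(r)=\Sigma_n(a).
\]
Injectivity of \(\Sigma_n\) yields \(a=p\circ_n r\in(p)\), and hence \((a)\subseteq(p)\). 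The proof of Lemma~\ref{lem:sigma_image} only establishes injectivity when \((n+1)(a-b)=0\) forces \(a=b\), which is exactly where the non-zero-divisor hypothesis enters. The explicit witness is this \(r=(1-k)/(n+1)\), the unique solution of \(\Sigma_n(r)=k\). It is well defined precisely because \(k\equiv 1\) modulo \((n+1)R\).

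\emph{Main obstacle.} The hard step is the last one: passing from an equality of norms back to an equality in \(\nEl(R)\). If \(n+1\) is a zero divisor, we only get \(a-p\circ_n r=t\) with \((n+1)t=0\). One could try to correct \(r\) to \(r+s\), using the identity
\[
  p\circ_n(r+s)=p\circ_n r+s\,\Sigma_n(p).
\]
This would require \(t\in\Sigma_n(p)R\), which need not hold in general. So I would state and prove the lemma under the hypothesis that \(n+1\) is a non-zero-divisor in \(R\).
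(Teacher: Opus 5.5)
Your forward direction and your check that \(k\equiv 1\) modulo \((n+1)R\) are fine. The backward direction has a genuine gap: you prove only a weaker statement, under the extra hypothesis that \(n+1\) is a non-zero-divisor. The lemma is stated, and true, for every ring \(R\). Your claim that the correction step ``need not hold in general'' is false. In your own notation, \(t=a-p\circ_n r\) satisfies \((n+1)t=0\), and therefore
\[
  t\,\Sigma_n(p) \;=\; t-(n+1)pt \;=\; t .
\]
So \(t\in\Sigma_n(p)R\) always, and \(p\circ_n(r+t)=p\circ_n r+t\,\Sigma_n(p)=a\). The cleaner route, which is the paper's, avoids injectivity entirely by taking the witness \(r=a-pk\) from the start. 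Rewriting \(\Sigma_n(a)=\Sigma_n(p)k\) as \(1-(n+1)a=k-(n+1)pk\) gives \((n+1)(a-pk)=1-k\), i.e.\ \(\Sigma_n(a-pk)=k\). Then
\[
  p\circ_n(a-pk) \;=\; a+p\bigl[(1-k)-(n+1)(a-pk)\bigr] \;=\; a ,
\]
which is an identity in any commutative ring.

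This also affects the ``Moreover'' clause. The statement names the representative \(r=a-pk\), whereas you give \(r=(1-k)/(n+1)\). That expression only makes sense under your extra hypothesis, and you never identify it with \(a-pk\). Your remark that the injectivity part of Lemma~\ref{lem:sigma_image} really needs \(n+1\) to be a non-zero-divisor, not merely nonzero, is a fair criticism of that lemma; for instance \(\Sigma_2(0)=\Sigma_2(2)\) in \(\Z/6\Z\). It is irrelevant here, though: an arbitrary preimage of \(k\) can fail, but \(a-pk\) always works. Concretely, take \(\Z/6\Z\) with \(n=2\), \(p=1\), \(a=3\), \(k=1\). The preimage \(r=0\) of \(k\) gives \(1\circ_2 0=1\neq 3\), while \(r=a-pk=2\) gives \(1\circ_2 2=3\).
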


    \begin{proof}
      Let \((a) \subseteq (p)\) be an inclusion of \(n\)-ary ideals, meaning \(a = p
      \circ_n r\). Lemma \ref{lem:mult} then gives the result, as \(\Sigma_n(a) =
      \Sigma_n(p)\cdot\Sigma_n(r)\) must hold so we have found \(k =
      \Sigma_n(r)\in\im(\Sigma_n)\).

      Assume conversely \((\Sigma_n(a))\subseteq (\Sigma_n(p))\), meaning \(\Sigma_n(a) =
      \Sigma_n(p)\cdot k\) and set \(r = a - pk\). Since
      \[
        \begin{array}{ccccc}
          \Sigma_n(a) &       & \;=\;               &       & \Sigma_n(p)\cdot k \\[4pt]
          \|          &       &                     &       & \|                 \\[2pt]
          1 - (n+1)a  & \;=\; & (1 - (n+1)p)\cdot k & \;=\; & k - (n+1)pk,
        \end{array}
      \]
      we get \(1 - (n+1)(a - pk) = k\). In other words, \(\Sigma_n(r) = k \in
      \im(\Sigma_n)\). We have
      \begin{align*}
        p \circ_n (a-pk) & = a + p(1-k) - (n+1)p(a-pk) \\
                         & = a + p\bigl[(1-k) - (n+1)(a-pk)\bigr].
      \end{align*}
      From \(\Sigma_n(a-pk) = k\), we have \(1-k = (n+1)(a-pk)\). The bracket vanishes
      and leaving us \(p \circ_n (a-pk) = p \circ_n r = a\) and we are done.
    \end{proof}

    \begin{Rem}
      The automatic membership \(k \in \im(\Sigma_n)\) is an important feature in
      \(n\)-ary elliptic rings. The "\(\circ_n\)-quotient" \(\Sigma_n(a)/\Sigma_n(p)\),
      whenever it exists in \(R\), is always a norm. For \(R = \Z\), this says that \(a
      \in (p)\) if and only if \(\Sigma_n(p) \mid \Sigma_n(a)\) in \(\Z\).
    \end{Rem}

    \begin{De}
      For an ideal \(\fa \subseteq R\), define
      \[
        J(\fa) \;:=\; \Sigma_n^{-1}(\fa)
        \;=\; \{\,x \in R \mid 1-(n+1)x \in \fa\,\}.
      \]
    \end{De}

    \begin{Pro} \label{prop:J_ideal}
      The set \(J(\fa)\) is an \(n\)-ary elliptic ideal of \(\nEl(R)\), for any ideal
      \(\fa \subseteq R\).
    \end{Pro}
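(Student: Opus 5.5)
We need to check (I1) and (I2) for \(J(\fa)\), and we do both by pushing the question through \(\Sigma_n\) into \(R\), where \(\fa\) is a classical ideal.

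\textbf{Condition (I2).} This follows directly from Lemma~\ref{lem:mult}. Let \(a \in R\) and \(r \in J(\fa)\). Then
\[
  \Sigma_n(a \circ_n r) \;=\; \Sigma_n(a)\,\Sigma_n(r).
\]
Since \(\Sigma_n(r) \in \fa\) and \(\fa\) absorbs multiplication by elements of \(R\), the product lies in \(\fa\). Hence \(a \circ_n r \in J(\fa)\).

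\textbf{Condition (I1).} Here we need a formula for the norm of an \(n\)-fold \(\ast_n\)-product. Take \(a_1, \ldots, a_n \in J(\fa)\) and put \(S = \sum_{i} a_i\). Using the definition \(a_1 \ast \cdots \ast a_n = 1 - S\), compute
\[
  \Sigma_n(1 - S) \;=\; 1 - (n+1) + (n+1)S \;=\; -n + (n+1)S .
\]
Now rewrite \((n+1)S\) in terms of norms. Since \((n+1)a_i = 1 - \Sigma_n(a_i)\), we have \((n+1)S = n - \sum_i \Sigma_n(a_i)\). Substituting gives
\[
  \Sigma_n(a_1 \ast \cdots \ast a_n) \;=\; -\sum_{i=1}^n \Sigma_n(a_i).
\]
This identity is the generalisation of Lemma~\ref{lem:sum}, and it is the only real step of the proof. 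The right-hand side lies in \(\fa\), because each \(\Sigma_n(a_i) \in \fa\) and \(\fa\) is closed under sums and negatives. Therefore \(a_1 \ast \cdots \ast a_n \in J(\fa)\).

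\textbf{Degenerate cases.} No separate argument is needed. If \(J(\fa)\) is empty, both conditions hold vacuously, and empty ideals are allowed by the remark following the definition of \(n\)-ary ideals. The case \(\fa = R\) is covered by the same computation. Finally, the argument never uses invertibility of \(n+1\) or the injectivity from Lemma~\ref{lem:sigma_image}; it only uses that \(\fa\) is an additive subgroup absorbing multiplication.
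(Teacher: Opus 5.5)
Your proof is correct and is essentially the paper's own argument. (I2) follows from the multiplicativity of \(\Sigma_n\) (Lemma~\ref{lem:mult}), and (I1) follows from the identity \(\Sigma_n(a_1\ast\cdots\ast a_n) = -\sum_i \Sigma_n(a_i)\), which the paper derives by the same substitution \((n+1)a_i = 1-\Sigma_n(a_i)\).
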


    \begin{proof}
      (I1) Let \(x_1, \ldots, x_n \in J(\fa)\), so \(\Sigma_n(x_i) = \alpha_i \in \fa\),
      meaning \(1 - (n+1)x_i = \alpha_i\). Then \((n+1)x_i = 1 - \alpha_i\) and so
      \begin{eqnarray*}
        \Sigma_n(x_1 \ast \cdots \ast x_n)
                                           & = & \Sigma_n\!\Bigl(1 - \sum x_i\Bigr) \\
                                           & = & -n + (n+1)\sum x_i                 \\
                                           & = & -n + \sum(1-\alpha_i) = -\sum \alpha_i \in \fa.
      \end{eqnarray*}

      (I2) Let \(x \in J(\fa)\) and \(r \in R\). By Lemma \ref{lem:mult}, we have
      \(\Sigma_n(x \circ_n r) = \Sigma_n(x)\cdot\Sigma_n(r) \in \fa\).
    \end{proof}

    The construction \(J\) satisfies the following properties:

    \begin{Pro} \label{prop:J_ideal_obs}
      Let \(\fa\subseteq R\) be an ideal.
      \begin{itemize}
        \item[(i)] \(J(\fa) = \nEl(R)\) if and only if \(\fa = R\).
        \item[(ii)] \(J((0)) = \{x: (n+1)x = 1\}\), which equals \(\{1/(n+1)\}\) if
          \(n+1 \in R^{\times}\), and empty otherwise.
        \item[(iii)] The map \(\fa \mapsto J(\fa)\) is order-preserving.
        \item[(iv)] \(J(\fa) \neq \emptyset\) if and only if \(\fa\) contains an element
          of \(1+(n+1)R\).
        \item[(v)] For \(R = \Z\), \(J((p)) = \emptyset\) for all primes \(p \mid (n+1)\).
          That is to say, the primes dividing \(n+1\) are "invisible" to the
          \(\circ_n\)-structure.
      \end{itemize}
    \end{Pro}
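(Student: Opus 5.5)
Each of the five items comes down to the definition \(J(\fa)=\Sigma_n^{-1}(\fa)\) together with Lemma~\ref{lem:sigma_image}, which describes the image of \(\Sigma_n\). The plan is to handle (iii) and (iv) first, since (i), (ii) and (v) then follow quickly.

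For (iii), taking preimages preserves inclusion: if \(\fa\subseteq\fb\) then \(\Sigma_n^{-1}(\fa)\subseteq\Sigma_n^{-1}(\fb)\). For (iv), \(J(\fa)\) is nonempty exactly when \(\fa\cap\im(\Sigma_n)\neq\emptyset\). Every element \(1-(n+1)a\) lies in \(1+(n+1)R\), because \(-a\) is again in \(R\). Conversely, \(1+(n+1)s=\Sigma_n(-s)\). So \(\im(\Sigma_n)=1+(n+1)R\) always holds, even without the injectivity hypothesis of Lemma~\ref{lem:sigma_image}, and this gives (iv).

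For (i), if \(\fa=R\) then clearly \(J(\fa)=\nEl(R)\). Conversely, if \(J(\fa)\) is all of \(R\), then \(0\in J(\fa)\). Hence \(\Sigma_n(0)=1\in\fa\), which forces \(\fa=R\).

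For (ii), \(x\in J((0))\) means \(1-(n+1)x=0\), that is, \((n+1)x=1\). This has a solution precisely when \(n+1\in R^\times\). In that case the solution is unique and equals \(1/(n+1)\), which is the element \(z\) of Lemma~\ref{le:zero}. Otherwise the set is empty.

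For (v), take \(R=\Z\) and a prime \(p\mid n+1\). Every element of \(1+(n+1)\Z\) is congruent to \(1\) modulo \(p\), so none of them lies in \((p)\). By (iv), \(J((p))=\emptyset\).

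There is no real obstacle here. The only point needing care is that the image description \(\im(\Sigma_n)=1+(n+1)R\) does not require \(n+1\neq 0\), so (iv) holds for every ring \(R\).
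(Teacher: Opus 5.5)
Your proposal is correct and follows the paper's proof item by item. For (i) you test \(x=0\), for (ii) you solve \((n+1)x=1\), for (iii) you use monotonicity of preimages, and for (iv) you use \(\im(\Sigma_n)=1+(n+1)R\). There are two minor differences: you deduce (v) from (iv) instead of arguing directly that \(p\mid 1-(n+1)x\) and \(p\mid n+1\) force \(p\mid 1\), and you correctly note that the image description needs no hypothesis \(n+1\neq 0\). That hypothesis is one the paper's citation of Lemma~\ref{lem:sigma_image} silently carries.
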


    \begin{proof}
      (i) That \(J(R) = R = \nEl(R)\) is trivial. Conversely, let \(J(\fa) = \nEl(R)\).
      For every \(x \in R\), we thus have \(1 - (n+1)x \in \fa\). By taking \(x = 0\) we
      get \(1 \in \fa\) and so \(\fa = R\).

      (ii) \(J((0)) = \{x \in R \mid \Sigma_n(x) = 0\} = \{x \mid (n+1)x = 1\}\). If
      \(n+1 \in R^{\times}\), this has the unique solution \(x = 1/(n+1)\), otherwise
      there is no solution in \(R\).

      (iii) Let \(\fa \subseteq \fb\) and \(x \in J(\fa)\). This gives \(\Sigma_n(x) \in
      \fa \subseteq \fb\) and we deduce \(x \in J(\fb)\) and hence, \(J(\fa) \subseteq
      J(\fb)\).

      (iv) Saying \(J(\fa) \neq \emptyset\) is equivalent to saying that there exists an
      element \(x \in R\) with \(\Sigma_n(x) \in \fa\). This in turn is equivalent to
      \(\im(\Sigma_n) \cap \fa \neq \emptyset\). Since \(\im(\Sigma_n) = 1+(n+1)R\), this
      is exactly the condition that \(\fa\) contains an element of \(1+(n+1)R\).

      (v) Let \(R = \Z\) and \(x\in J((p))\). Then \(p \mid 1-(n+1)x\). Subsequently, if
      \(p \mid (n+1)\) is to hold as well, we get \(p = \pm 1\), contradicting that \(p\)
      is a prime.
    \end{proof}

    \begin{Pro} \label{prop:J_prime}
      If \(\p \subsetneq R\) is a prime ideal of \(R\), then \(J(\p)\) is an
      \(\circ_n\)-prime ideal of \(\nEl(R)\).
    \end{Pro}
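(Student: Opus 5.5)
The plan is to reduce everything to the multiplicativity of the norm map (Lemma~\ref{lem:mult}) together with the fact that \(J(\p)\) is already an \(n\)-ary ideal by Proposition~\ref{prop:J_ideal}. So only two things remain to check: that \(J(\p)\) is a proper subset of \(\nEl(R)\), and that it satisfies the prime condition for \(\circ_n\).

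\emph{Properness.} Since \(\p \subsetneq R\), Proposition~\ref{prop:J_ideal_obs}(i) gives \(J(\p) \neq \nEl(R)\). More concretely, \(0 \notin J(\p)\), because \(\Sigma_n(0) = 1 \notin \p\). This also matches the convention that the empty set counts as prime. For example, if \(\p\) contains no element of \(1+(n+1)R\), then \(J(\p)=\emptyset\) by Proposition~\ref{prop:J_ideal_obs}(iv). Such a \(J(\p)\) is still a proper ideal, so no special treatment is needed; the argument below covers it vacuously.

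\emph{Primality.} Let \(a,b\in\nEl(R)\) with \(a\circ_n b \in J(\p)\). By definition of \(J\) this means \(\Sigma_n(a\circ_n b)\in\p\). Lemma~\ref{lem:mult} rewrites this as \(\Sigma_n(a)\cdot\Sigma_n(b)\in\p\). Since \(\p\) is prime in \(R\), either \(\Sigma_n(a)\in\p\) or \(\Sigma_n(b)\in\p\). That is, \(a\in J(\p)\) or \(b\in J(\p)\), as required.

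There is no genuine obstacle here: \(J(\p)\) is simply the preimage of a prime ideal under the monoid homomorphism \(\Sigma_n\colon(\nEl(R),\circ_n,0)\to(R,\cdot,1)\). Such preimages are always prime monoid ideals, and the ideal axiom (I1) for \(\ast_n\) was already handled in Proposition~\ref{prop:J_ideal}. The only point worth a remark is properness, which relies on \(\Sigma_n(0)=1\). Equivalently, the complement \(\nEl(R)\setminus J(\p)=\Sigma_n^{-1}(R\setminus\p)\) is closed under \(\circ_n\) and contains \(0\). One could finish instead via the earlier lemma characterising primes by complements.
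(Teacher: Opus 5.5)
Your proof is correct and follows the paper's argument essentially step for step. Properness comes from \(\Sigma_n(0)=1\notin\p\), so \(0\notin J(\p)\); primality comes from rewriting \(a\circ_n b\in J(\p)\) as \(\Sigma_n(a)\Sigma_n(b)\in\p\) via Lemma~\ref{lem:mult} and then using that \(\p\) is prime in \(R\). Your explicit appeal to Proposition~\ref{prop:J_ideal} for the ideal property, and your remark on the possibly empty case, spell out points the paper leaves implicit.
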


    \begin{proof}
      Since \(1 \notin \p\), \(\Sigma_n(0) = 1 \notin \p\), so \(0 \notin J(\p)\). Hence
      \(J(\p) \neq \nEl(R)\). Assume \(x \circ_n y \in J(\p)\). Then \(\Sigma_n(x) \cdot
      \Sigma_n(y) = \Sigma_n(x \circ_n y) \in \p\) and without loss of generality,
      \(\Sigma_n(x) \in \p\), which gives us \(x \in J(\p)\).
    \end{proof}

\section{Arithmetic of \(\nEl(R)\)}\label{sec:arith_R}

  When our \(n\)-ary elliptic ring is of the form \(A = \nEl(R)\), we can control the
  arithmetic through the norm map \(\Sigma_n\). This section makes that control
  precise.

\subsection{Units, associates, and divisibility}

    \begin{Le} \label{le:units_general}
      An element \(u \in \nEl(R)\) is an \(\circ_n\)-unit if and only if \(\Sigma_n(u)
      \in R^{\times}\).
    \end{Le}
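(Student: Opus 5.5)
The plan is to reduce both directions to the norm map, using that \(\Sigma_n\) is a monoid homomorphism (Lemma~\ref{lem:mult}) and the norm-divisibility criterion (Lemma~\ref{lem:norm_div}). Recall that the \(\circ_n\)-unit of \(\nEl(R)\) is \(0\) and that \(\Sigma_n(0)=1\).

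For the forward direction, suppose \(u\) is a \(\circ_n\)-unit, so there is \(v\) with \(u\circ_n v = 0\). Applying \(\Sigma_n\) and Lemma~\ref{lem:mult} gives
\[
\Sigma_n(u)\cdot\Sigma_n(v) \;=\; \Sigma_n(0) \;=\; 1.
\]
Hence \(\Sigma_n(u)\in R^\times\).

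For the converse, suppose \(\Sigma_n(u)\in R^\times\). Being a \(\circ_n\)-unit is the same as \(0\in(u)\), i.e.\ \((0)\subseteq(u)\) in \(\nEl(R)\). By Lemma~\ref{lem:norm_div} with \(a=0\) and \(p=u\), this is equivalent to \((1)=(\Sigma_n(0))\subseteq(\Sigma_n(u))\) in \(R\), which holds exactly when \(\Sigma_n(u)\) is a unit.

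The only point to watch is that the quotient \(k=\Sigma_n(u)^{-1}\) must lie in the image of \(\Sigma_n\) so that it lifts. Lemma~\ref{lem:norm_div} asserts this automatically, but I would also check it directly to avoid any issue when \(n+1\) is a zero divisor. Take the lemma's representative
\[
v \;:=\; 0 - u k \;=\; -u\,\Sigma_n(u)^{-1}.
\]
Then
\[
\Sigma_n(v) \;=\; 1+(n+1)uk \;=\; k\bigl(\Sigma_n(u)+(n+1)u\bigr) \;=\; k.
\]
The bracket computation in the proof of Lemma~\ref{lem:norm_div} then gives \(u\circ_n v = 0\). Expanding by hand to confirm:
\[
u\circ_n v \;=\; u - uk + (n+1)u^2k \;=\; u\bigl(1 - k\,\Sigma_n(u)\bigr) \;=\; 0.
\]
So \(v\) is an explicit inverse of \(u\).

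I expect no real obstacle. The only subtlety is not assuming \(\Sigma_n\) is injective or that \(n+1\) is invertible, and the explicit inverse \(v=-u\,\Sigma_n(u)^{-1}\) handles that uniformly.
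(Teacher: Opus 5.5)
Your proof is correct and matches the paper's: the forward direction applies multiplicativity of \(\Sigma_n\) to \(u\circ_n v=0\), and the converse exhibits the explicit inverse \(v=-u\,\Sigma_n(u)^{-1}\) and checks \(u\circ_n v=0\) by direct expansion. The detour through Lemma~\ref{lem:norm_div} is harmless but redundant, because your direct computation already settles the converse without any assumption on \(n+1\).
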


    \begin{proof}
      Let \(u\) be an \(\circ_n\), meaning there exists a \(v\in \nEl(R)\), such that \(u
      \circ_n v = 0\). It follows \(\Sigma_n(u)\cdot\Sigma_n(v) = 1\) by
      Lemma~\ref{lem:mult} and we obtain \(\Sigma_n(u) \in R^{\times}\).

      Conversly, set \(v = -u\cdot\Sigma_n(u)^{-1}\). Then
      \begin{align*}
        u \circ_n v
                    & = u - u\,\Sigma_n(u)^{-1} + (n+1)u^2\Sigma_n(u)^{-1} \\
                    & = u + u\,\Sigma_n(u)^{-1}\bigl((n+1)u - 1\bigr)      \\
                    & = u + u\,\Sigma_n(u)^{-1}\bigl(-\Sigma_n(u)\bigr)    \\
                    & = u - u = 0,
      \end{align*}
      where we used \((n+1)u - 1 = -\Sigma_n(u)\) in the third step. For uniqueness,
      observe that any inverse \(v'\) satisfies \(\Sigma_n(v') = \Sigma_n(u)^{-1}\), and
      the formula \(v = -u\Sigma_n(u)^{-1}\) is manifestly unique.
    \end{proof}

    \begin{Cor}
      \(\Sigma_n\) restricts to a group homomorphism \(\nEl(R)^{\times} \to R^{\times}\).
    \end{Cor}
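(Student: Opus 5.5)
The plan is to combine Lemma~\ref{lem:mult} with Lemma~\ref{le:units_general}. There are three things to check: that the restriction is well defined, that it is multiplicative, and that it respects identities and inverses.

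First, I would show the restriction makes sense. If \(u \in \nEl(R)^{\times}\), then Lemma~\ref{le:units_general} gives \(\Sigma_n(u) \in R^{\times}\). So \(\Sigma_n\) maps \(\nEl(R)^{\times}\) into \(R^{\times}\). Here \(\nEl(R)^{\times}\) is the group of \(\circ_n\)-units of the commutative monoid \((\nEl(R), \circ_n, 0)\), and \(R^{\times}\) is the group of units of \((R,\cdot,1)\).

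Second, I would verify the homomorphism property. Lemma~\ref{lem:mult} gives \(\Sigma_n(u \circ_n v) = \Sigma_n(u)\cdot\Sigma_n(v)\) for all \(u,v\), in particular for units. It also gives \(\Sigma_n(0) = 1\), so the identity of \(\nEl(R)^{\times}\) goes to the identity of \(R^{\times}\).

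Third, inverses are automatic, since any monoid homomorphism between groups is a group homomorphism. Concretely, if \(u \circ_n v = 0\), then \(\Sigma_n(u)\Sigma_n(v) = \Sigma_n(0) = 1\), so \(\Sigma_n(v) = \Sigma_n(u)^{-1}\). This matches the explicit inverse \(v = -u\,\Sigma_n(u)^{-1}\) found in the proof of Lemma~\ref{le:units_general}.

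No step is a real obstacle, since the statement follows directly from Lemma~\ref{lem:mult} and Lemma~\ref{le:units_general}. The only point needing care is keeping the two structures apart: the identity on the source is \(0\) with respect to \(\circ_n\), while the identity on the target is \(1\) in \(R\).
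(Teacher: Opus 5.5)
Your proof is correct and essentially matches the paper's: multiplicativity comes from Lemma~\ref{lem:mult}, and the fact that units land in \(R^{\times}\) comes from Lemma~\ref{le:units_general}. The only cosmetic difference is in the inverses. The paper checks them by computing \(\Sigma_n(-u\Sigma_n(u)^{-1}) = 1/\Sigma_n(u)\) directly, whereas you deduce them from \(\Sigma_n(u)\Sigma_n(v) = \Sigma_n(0) = 1\), which is equally valid.
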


    \begin{proof}
      Multiplicativity follows from Lemma~\ref{lem:mult}. For the last claim, observe
      that
      \[
        \Sigma_n(-u\Sigma_n(u)^{-1}) \;=\; 1 + (n+1)u\Sigma_n(u)^{-1} \;=\; (\Sigma_n(u) +
        (n+1)u)/\Sigma_n(u) \;=\; 1/\Sigma_n(u),
      \]
      where we used \(\Sigma_n(u) + (n+1)u = 1\).
    \end{proof}

\subsection{\(n\)-ary cancellativity}

    \begin{Le} \label{lem:cancel_sigma}
      For \(a, b, c \in \nEl(R)\),
      \[ a \circ_n b \;=\; a \circ_n c \;\iff\; (b - c)\,\Sigma_n(a) \;=\; 0. \]
    \end{Le}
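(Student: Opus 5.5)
The plan is a direct computation from the definition \(a\circ_n b = a + b - (n+1)ab\), followed by recognising the norm \(\Sigma_n(a) = 1-(n+1)a\) in the result. I will compute the difference of the two products. Expanding both,
\[
  (a\circ_n b) - (a\circ_n c) \;=\; (a + b - (n+1)ab) - (a + c - (n+1)ac),
\]
and the terms \(a\) cancel. What remains is \(b - c - (n+1)a(b-c)\).

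Next I will factor out \(b-c\) using commutativity of \(R\):
\[
  (a\circ_n b) - (a\circ_n c) \;=\; (b-c)\bigl(1-(n+1)a\bigr) \;=\; (b-c)\,\Sigma_n(a).
\]
This identity holds in \(R\) for all \(a,b,c\).

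Since the underlying set of \(\nEl(R)\) is \(R\), equality \(a\circ_n b = a\circ_n c\) means that the difference above is \(0\) in \(R\). By the identity, this is exactly the condition \((b-c)\Sigma_n(a) = 0\), which gives both directions of the equivalence at once.

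There is no real obstacle here. The only point needing care is that the additive subtraction used is that of the ring \(R\), not any \(\ast_n\)-operation, which is legitimate because \(\nEl(R)\) has \(R\) as its underlying set.
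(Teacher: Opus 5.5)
Your proposal is correct and is the paper's proof: both expand \(a\circ_n b\) and \(a\circ_n c\) using \(x\circ_n y = x+y-(n+1)xy\), cancel the \(a\) terms, factor out \(b-c\), and recognise \(1-(n+1)a = \Sigma_n(a)\). The only difference is cosmetic: you compute the difference of the two products, whereas the paper writes a chain of equivalences.
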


    \begin{proof}
      Expanding both sides using \(x \circ_n y = x + y - (n+1)xy\), gives us
      \[
        \begin{array}{crcl}
                   & a + b - (n+1)ab       & \;=\; & a + c - (n+1)ac \\
          \;\iff\; & (b - c) - (n+1)a(b-c) & \;=\; & 0               \\
          \;\iff\; & (b-c)(1-(n+1)a)       & \;=\; & 0,
        \end{array}
      \]
      and subsequently, \(1 - (n+1)a = \Sigma_n(a)\).
    \end{proof}

    \begin{Pro} \label{prop:cancel_char}
      An \(n\)-ary ring \(A = \nEl(R)\) is \(\circ_n\)-cancellative if and only if no
      element of \(\im(\Sigma_n) \setminus \{0\}\) is a zero-divisor in \(R\).
    \end{Pro}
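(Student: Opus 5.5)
The plan is to reduce everything to Lemma~\ref{lem:cancel_sigma}, which says \(a\circ_n b = a\circ_n c\) if and only if \((b-c)\,\Sigma_n(a)=0\). Cancellativity in the sense of Definition~\ref{def:cancellative} then becomes a statement about zero-divisors among the norms \(\Sigma_n(a)\). Before doing so, I first need to understand the exceptional element \(z_A\) in terms of the norm.

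\emph{Preliminary step: identifying \(z_A\).} I will show that \(\Sigma_n(a)=0\) holds exactly when \(a\) is the absorbing element. Indeed, \(\Sigma_n(a)=0\) means \((n+1)a=1\). In that case \(n+1\in R^\times\) and \(a=1/(n+1)=z\), which is absorbing by Lemma~\ref{le:zero}(i) and has \(\Sigma_n(z)=0\) by Lemma~\ref{le:zero}(iii). Conversely, if no solution of \((n+1)x=1\) exists, then no \(a\) has norm \(0\). (Any absorbing \(w\) would satisfy \(w = 0\circ_n w\), and also \(w\circ_n a = w\) for all \(a\), which forces \(\Sigma_n(w)\Sigma_n(a)=\Sigma_n(w)\); I will only need the easy direction, namely that \(\Sigma_n(a)=0\) implies \(a=z_A\), together with \(\Sigma_n(z_A)=0\) when \(z_A=1/(n+1)\).) So the condition ``\(a\neq z_A\)'' is equivalent to ``\(\Sigma_n(a)\in\im(\Sigma_n)\setminus\{0\}\)''. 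This identification is the only delicate point, since the definition of cancellativity refers to \(z_A\) only ``if it exists''. The degenerate case \(n+1=0\), where \(\Sigma_n\equiv 1\), fits in automatically: \(1\) is not a zero-divisor unless \(R=0\), and \(\circ_n\) is then just \(+\), which is cancellative.

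\emph{(\(\Leftarrow\))} Suppose no nonzero norm is a zero-divisor. Let \(a\circ_n b=a\circ_n c\) with \(a\neq z_A\). By the preliminary step, \(\Sigma_n(a)\neq 0\), and \(\Sigma_n(a)\) lies in the image of \(\Sigma_n\), so by hypothesis it is not a zero-divisor. Lemma~\ref{lem:cancel_sigma} gives \((b-c)\Sigma_n(a)=0\), hence \(b-c=0\), that is, \(b=c\).

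\emph{(\(\Rightarrow\))} Suppose \(\nEl(R)\) is \(\circ_n\)-cancellative, and let \(s=\Sigma_n(a)\neq 0\) satisfy \(st=0\) for some \(t\in R\). I take \(b=t\) and \(c=0\). Then \((b-c)\Sigma_n(a)=ts=0\), so Lemma~\ref{lem:cancel_sigma} yields \(a\circ_n t=a\circ_n 0\). Since \(\Sigma_n(a)\neq 0\), we have \(a\neq z_A\) (as \(\Sigma_n(z_A)=0\)), so cancellativity forces \(t=0\). Hence \(s\) is not a zero-divisor.

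I expect no real obstacle. The only care needed is the bookkeeping around \(z_A\) described in the preliminary step, in particular making sure that the ``or \(a=z_A\)'' clause corresponds precisely to excluding the norm value \(0\).
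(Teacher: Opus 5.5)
Your proof is correct and takes the same route as the paper: both reduce to Lemma~\ref{lem:cancel_sigma} and identify the exceptional case \(a = z_A\) with \(\Sigma_n(a)=0\), a step the paper leaves implicit and you make explicit. You justify "\(\Sigma_n(z_A)=0\)" only under the convention \(z_A = 1/(n+1)\); it in fact holds for any absorbing \(w\), since \(w\circ_n 1 = w = w\circ_n 0\) and Lemma~\ref{lem:cancel_sigma} then give \(\Sigma_n(w)=0\), whereas your parenthetical route through \(\Sigma_n(w)\Sigma_n(a)=\Sigma_n(w)\) would only yield \((n+1)\Sigma_n(w)=0\).
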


    \begin{proof}
      We know by Lemma~\ref{lem:cancel_sigma} that \(a \circ_n b = a \circ_n c\) if and
      only if \((b-c)\Sigma_n(a) = 0\). Cancellation fails if and only if there exist
      \(b\neq c\), and \(a\) with \(\Sigma_n(a)\neq 0\) such that \((b-c)\Sigma_n(a)=0\).
      But this is equivalent to saying that there exists an element of
      \(\im(\Sigma_n)\setminus\{0\}\) which is a zero divisor.
    \end{proof}

\subsection{\(n\)-ary fields}

\subsubsection*{Definition and characterisation}

      We will use the term \emph{group-like binoid}, which is a term introduced in
      \cite{p7}, being effectively a group with an absorbing element, which is
      respected by morphisms. To be more precise, a \emph{binoid} is a set \(M\) with a
      binary associative operation \(\cdot\) and a distinguished element \(z\in M\)
      such that \(z\cdot m = m\cdot z = z\) for all \(m\in M\). Morphisms between
      binoids \(M\) and \(N\) are monoid homomorphisms that additionally map \(z_M\) to
      \(z_N\). If for all elements \(m\in M\) with \(m\neq z\) we have an inverse, we
      will say that the binoid is \emph{group-like}.

      \begin{De} \label{def:nary_field}
        An \(n\)-ary elliptic ring \(A\) is an \emph{\(n\)-ary elliptic field} if the
        \(\circ_n\)-monoid \((A, \circ_n, 0_A)\) is a group or a group-like binoid.
        Equivalently, if \((A\setminus\{z_A\},\circ_n, 0_A)\) is a group.
      \end{De}

\subsection{Maximal implies prime, and the hierarchy}

    The aim of this section is to show that the classical implications field
    \(\Rightarrow\) integral domain \(\Rightarrow\) ring, and maximal \(\Rightarrow\)
    prime \(\Rightarrow\) ideal, carry over to the \(n\)-ary setting.

    \begin{Th} \label{th:field_is_cancellative}
      Every \(n\)-ary elliptic field is \(\circ_n\)-cancellative.
    \end{Th}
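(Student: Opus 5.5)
The plan is the standard argument that groups are cancellative, run in the \(\circ_n\)-monoid. By Definition~\ref{def:nary_field}, the monoid \((A,\circ_n,0_A)\) is either a group or a group-like binoid, and I will treat the two cases together. Suppose \(a\circ_n b = a\circ_n c\). By Definition~\ref{def:cancellative}, I must show that either \(b=c\) or \(a=z_A\), where \(z_A\) is the distinguished absorbing element (if it exists). So assume \(a\neq z_A\), or, in the group case, let \(a\) be arbitrary since there is no absorbing element to exclude.

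In either case \(a\) has a \(\circ_n\)-inverse \(a'\) with \(a'\circ_n a = 0_A\). This holds because the monoid is a group, or because every non-absorbing element of a group-like binoid is invertible. I then compose both sides of \(a\circ_n b=a\circ_n c\) with \(a'\) and use associativity of \(\circ_n\) together with the unit property of \(0_A\):
\[
b = 0_A\circ_n b = (a'\circ_n a)\circ_n b = a'\circ_n(a\circ_n b) = a'\circ_n(a\circ_n c) = (a'\circ_n a)\circ_n c = c .
\]
This gives the required implication.

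The only point needing care is bookkeeping rather than mathematics. The absorbing element in Definition~\ref{def:cancellative} must be the same distinguished \(z_A\) as in Definition~\ref{def:nary_field}. Also, in the group-like binoid case with \(a=z_A\), the conclusion is allowed to fail: indeed \(z_A\circ_n b=z_A=z_A\circ_n c\) for all \(b,c\), which is exactly why the definition of cancellativity exempts \(z_A\).

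One degenerate case is worth noting. If \(A\) is trivial, so that \(z_A=0_A\), the statement holds vacuously. Likewise, in the group case no element is absorbing unless \(A\) is trivial, so the exemption never interferes there. I expect no real obstacle; the argument uses only the \(\circ_n\)-monoid structure and never \(\ast_n\).
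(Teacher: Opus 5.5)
Your proof is correct and is essentially the paper's argument: the paper also takes the $\circ_n$-inverse $a'$ of $a\neq z_A$ and runs exactly your chain $b=(a'\circ_n a)\circ_n b=\cdots=(a'\circ_n a)\circ_n c=c$. The paper additionally detours through $\Sigma_n$ to justify associativity and handles $b=z_A$ or $c=z_A$ as a separate case. Neither step is needed, since $(A,\circ_n,0_A)$ is a monoid by definition, so your version is cleaner and does not tacitly assume $A=\nEl(R)$.
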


    \begin{proof}
      Let \(A\) be an \(n\)-ary elliptic field and suppose \(a\circ_n b = a\circ_n c\)
      with \(a\neq z_A\). By Definition~\ref{def:nary_field}, \(a\) has an
      \(\circ_n\)-inverse \(a'\) with \(a'\circ_n a = 0_A\). By Lemma~\ref{lem:mult},
      \(\Sigma_n\) is a monoid homomorphism and on \(A\setminus\{z\}\), it is an
      isomorphism onto \(R^{\times}\), since \(a\neq z\) if and only if \(\Sigma_n(a)\neq
      0\), and every nonzero element is a unit. In particular \(\circ_n\) is associative
      on \(A\setminus\{z\}\), as it is isomorphic to the associative group
      \((R^{\times},\cdot)\). If \(b = z\) or \(c = z\) then
      \[ a\circ_n b \;=\; a\circ_n z \;=\; z \;=\; a\circ_n c \]
      forces both to be \(z\), so \(b=c\). Otherwise, we have
      \[
        b \;=\; 0_A\circ_n b \;=\; (a'\circ_n a)\circ_n b \;=\; a'\circ_n(a\circ_n b)
        \;=\; a'\circ_n(a\circ_n c) \;=\; (a'\circ_n a)\circ_n c \;=\; 0_A\circ_n c
        \;=\; c. \qedhere
      \]
    \end{proof}

    \begin{Rem}
      For a general \(n\)-ary elliptic ring \(A\) (not necessarily of the form
      \(\nEl(R)\)), the implication maximal \(\Rightarrow\) prime, or equivalently,
      ideal-field \(\Rightarrow\) ideal-cancellative, remains open.
    \end{Rem}

\subsection{Irreducible and prime elements}

    The following propositions hold for any ring \(R\).

    \begin{Pro} \label{prop:prime_general}
      An element \(p \in \nEl(R)\) is \(\circ_n\)-prime if and only if \(\Sigma_n(p)\) is
      prime in the submonoid \((1+(n+1)R, \cdot, 1)\). That is to say, for all \(s, t \in
      1 + (n+1)R\),
      \[
        \Sigma_n(p) \mid st \,\text{ in }\, R
        \implies
        \Sigma_n(p) \mid s \,\text{ or }\, \Sigma_n(p) \mid t \,\text{ in }\, R.
      \]
    \end{Pro}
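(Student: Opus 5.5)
The plan is to transport the whole primality condition through the norm $\Sigma_n$, using two earlier facts. First, $\Sigma_n$ is a surjective monoid homomorphism $(\nEl(R),\circ_n,0)\to(1+(n+1)R,\cdot,1)$, by Lemma~\ref{lem:mult} and the description of the image. Surjectivity is immediate from the formula and does not need the injectivity part of Lemma~\ref{lem:sigma_image}. Second, Lemma~\ref{lem:norm_div} says that $a\in(p)$ in $\nEl(R)$ if and only if $\Sigma_n(p)\mid\Sigma_n(a)$ in $R$. The proof of Lemma~\ref{lem:norm_div} uses only the explicit witness $r=a-pk$, so it holds without any hypothesis on $n+1$. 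I will read ``prime in the submonoid'' as: a non-unit of $1+(n+1)R$ satisfying the displayed implication. Divisibility in $R$ and in the submonoid agree here, because by Lemma~\ref{lem:norm_div} any cofactor $k$ with $\Sigma_n(a)=\Sigma_n(p)k$ automatically lies in $\im(\Sigma_n)=1+(n+1)R$.

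\emph{Step 1: non-units.} I will show that $(p)\ne\nEl(R)$ if and only if $p$ is not a $\circ_n$-unit. This holds because $(p)$ is the whole ring exactly when $0\in(p)$, i.e.\ when $p\circ_n r=0$ for some $r$. By Lemma~\ref{le:units_general}, $p$ is a non-unit if and only if $\Sigma_n(p)\notin R^\times$. This is equivalent to $\Sigma_n(p)$ being a non-unit in the submonoid $1+(n+1)R$, because the inverse of a unit $\Sigma_n(u)$ is again a norm, namely $\Sigma_n(-u\Sigma_n(u)^{-1})$ by the Corollary following Lemma~\ref{le:units_general}.

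\emph{Step 2: the prime condition.} Suppose $\Sigma_n(p)$ is prime in the submonoid, and let $a\circ_n b\in(p)$. By Lemma~\ref{lem:norm_div} and multiplicativity, $\Sigma_n(p)\mid\Sigma_n(a)\Sigma_n(b)$. Both factors lie in $1+(n+1)R$, so the hypothesis gives $\Sigma_n(p)\mid\Sigma_n(a)$ (say). Lemma~\ref{lem:norm_div} then gives $a\in(p)$.

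Conversely, suppose $p$ is $\circ_n$-prime, and let $s,t\in 1+(n+1)R$ with $\Sigma_n(p)\mid st$. By surjectivity, write $s=\Sigma_n(a)$ and $t=\Sigma_n(b)$. Then $\Sigma_n(a\circ_n b)=st$, so $a\circ_n b\in(p)$ by Lemma~\ref{lem:norm_div}. Primality gives $a\in(p)$ or $b\in(p)$, and hence $\Sigma_n(p)\mid s$ or $\Sigma_n(p)\mid t$.

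None of this is hard. The only point needing care is that no step secretly uses injectivity of $\Sigma_n$ or cancellativity, since the statement is for arbitrary $R$, including the case $n+1=0$ where $\Sigma_n$ is constant. I will check this by re-reading the proofs of Lemma~\ref{lem:norm_div} and Lemma~\ref{le:units_general} and confirming that both are purely algebraic identities. I will also make sure that the unit condition is matched on both sides of the equivalence, as done in Step 1.
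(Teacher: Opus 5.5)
Your proposal is correct and takes essentially the paper's approach: transport the prime condition through the multiplicative norm via Lemma~\ref{lem:norm_div}, with surjectivity of \(\Sigma_n\) onto \(1+(n+1)R\) giving the converse direction. Your Step~1, which matches non-units on both sides, is a worthwhile addition, because the paper's ``that is to say'' paraphrase drops the non-unit requirement. Without that requirement, \(p=0\) (with \(\Sigma_n(0)=1\)) would satisfy the displayed implication even though it is a \(\circ_n\)-unit.
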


    \begin{proof}
      By Lemma \ref{lem:norm_div}, \(p \mid_n a\) if and only if \(\Sigma_n(p) \mid
      \Sigma_n(a)\). Since \(\Sigma_n(a\circ_n b) = \Sigma_n(a)\Sigma_n(b)\) and both
      factors lie in \(1 + (n+1)R\), the prime condition translates exactly to the stated
      condition.
    \end{proof}

    \begin{Cor} \label{cor:prime_suff}
      If \(\Sigma_n(p)\) is a prime element of \(R\), then \(p\) is \(\circ_n\)-prime in
      \(\nEl(R)\).
    \end{Cor}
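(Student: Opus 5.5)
The plan is to deduce the corollary directly from Proposition~\ref{prop:prime_general}, together with one small check that \(p\) is not a \(\circ_n\)-unit.

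First, I will show that \(p\) is a non-unit. If \(\Sigma_n(p)\) is a prime element of \(R\), then by definition it is not a unit of \(R\). Lemma~\ref{le:units_general} then gives \(p \notin \nEl(R)^{\times}\). Equivalently, \((p) \neq \nEl(R)\): otherwise \(0 \in (p)\), so \(p\) would \(\circ_n\)-divide \(0\), and Lemma~\ref{lem:norm_div} would force \(\Sigma_n(p) \mid \Sigma_n(0) = 1\), which is impossible.

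Next, I will verify the criterion of Proposition~\ref{prop:prime_general}. Take \(s, t \in 1+(n+1)R\) with \(\Sigma_n(p) \mid st\) in \(R\). Since \(\Sigma_n(p)\) is prime in \(R\) itself, the classical prime property gives \(\Sigma_n(p) \mid s\) or \(\Sigma_n(p) \mid t\). This is exactly the condition required there, now restricted to the submonoid \(1+(n+1)R\). Proposition~\ref{prop:prime_general} then yields that \(p\) is \(\circ_n\)-prime.

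If one prefers to avoid quoting the proposition, I would argue directly as follows. Suppose \(a \circ_n b \in (p)\). Lemma~\ref{lem:norm_div} gives \(\Sigma_n(p) \mid \Sigma_n(a)\Sigma_n(b)\), using Lemma~\ref{lem:mult}. Primality in \(R\) then gives, say, \(\Sigma_n(p) \mid \Sigma_n(a)\). Applying Lemma~\ref{lem:norm_div} again gives \(a \in (p)\).

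There is no real obstacle here. The only point requiring care is the non-unit and properness condition, and that is handled by Lemma~\ref{le:units_general}.
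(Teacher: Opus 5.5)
Your proof is correct and follows the paper's route. The paper gives no separate proof and treats this corollary as an immediate consequence of Proposition~\ref{prop:prime_general}, since a prime element of \(R\) is in particular prime in the submonoid \(1+(n+1)R\). Your non-unit check via Lemma~\ref{le:units_general} and your direct unpacking through Lemmas~\ref{lem:mult} and~\ref{lem:norm_div} spell out details the paper leaves implicit.
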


    The converse for irreducibles fails in general. Let \(R=\Z\) and \(n\) be such that
    \((\Z/(n+1)\Z)^\times \not\cong \{\pm 1\}\). There exist \(\circ_n\)-irreducibles
    with composite norm, as can be seen inthe following example:

    \begin{Ex} \label{ex:main_counter_example}
      For \(n = 4\), \(-1\) is \(\circ_4\)-irreducible, yet \(\Sigma_4(-1) = 6 = 2\cdot
      3\), is a composite.
    \end{Ex}

    \begin{Pro}
      An element \(p \in \nEl(R)\) is \(\circ_n\)-irreducible if and only if
      \(\Sigma_n(p)\) is irreducible in the submonoid \((1+(n+1)R, \cdot, 1)\). That is
      to say, if it cannot be written as \(\Sigma_n(p) = s\cdot t\) with \(s, t \in 1 +
      (n+1)R\) and \(s, t \notin R^{\times}\).
    \end{Pro}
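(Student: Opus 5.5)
The plan is to translate factorisations back and forth through the norm \(\Sigma_n\). There are three ingredients: Lemma~\ref{lem:mult} (\(\Sigma_n\) is multiplicative), Lemma~\ref{le:units_general} (\(u\) is a \(\circ_n\)-unit if and only if \(\Sigma_n(u)\in R^\times\)), and, most importantly, Lemma~\ref{lem:norm_div}. The last one lifts a factorisation of \(\Sigma_n(p)\) to a \(\circ_n\)-factorisation of \(p\) without any injectivity assumption on \(\Sigma_n\).

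First I would note that the non-unit conditions match. By Lemma~\ref{le:units_general}, \(p\notin\nEl(R)^\times\) exactly when \(\Sigma_n(p)\notin R^\times\). I would also record that a unit of the submonoid \((1+(n+1)R,\cdot,1)\) is the same thing as an element of \(R^\times\cap(1+(n+1)R)\). Indeed, if \(s\equiv 1 \pmod{(n+1)R}\) is invertible in \(R\), then \(s^{-1}\equiv 1\) as well. So "\(s\notin R^\times\)" in the statement really is the non-unit condition of that monoid, and the notion of irreducibility there is the intended one. Recall also that \(\im(\Sigma_n)=1+(n+1)R\) (the containment \(\subseteq\) is immediate, and \(\supseteq\) holds by definition).

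For the direction "\(\Sigma_n(p)\) irreducible \(\Rightarrow\) \(p\) irreducible", suppose \(p=a\circ_n b\). Then \(\Sigma_n(p)=\Sigma_n(a)\Sigma_n(b)\) with both factors in \(1+(n+1)R\). Irreducibility of \(\Sigma_n(p)\) forces one factor, say \(\Sigma_n(a)\), into \(R^\times\), and then \(a\) is a \(\circ_n\)-unit by Lemma~\ref{le:units_general}.

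For the converse, suppose \(\Sigma_n(p)=st\) with \(s,t\in 1+(n+1)R\) and neither in \(R^\times\). Choose \(a\) with \(\Sigma_n(a)=s\). Then \((\Sigma_n(p))\subseteq(\Sigma_n(a))\) in \(R\), with quotient \(k=t\in\im(\Sigma_n)\). Lemma~\ref{lem:norm_div} then gives \(p=a\circ_n r\) with the explicit element \(r=p-at\) and \(\Sigma_n(r)=t\). Neither \(a\) nor \(r\) is a \(\circ_n\)-unit, since their norms \(s,t\) are not in \(R^\times\). Hence \(p\) is not irreducible. Non-unitness of \(p\) transfers in both directions as noted above.

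The only point needing care is this lifting step. A naive argument would pick preimages of both \(s\) and \(t\) and hope that \(a\circ_n b=p\), which requires \(\Sigma_n\) to be injective; that fails, for example, when \(n+1\) is a zero divisor. Using the explicit representative \(r=p-at\) from Lemma~\ref{lem:norm_div} avoids this entirely, since it only requires choosing a single preimage \(a\) of \(s\).
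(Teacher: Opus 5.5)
Your proof is correct and takes the same route as the paper: multiplicativity of \(\Sigma_n\), Lemma~\ref{le:units_general} to match units, and Lemma~\ref{lem:norm_div} to lift a factorisation of \(\Sigma_n(p)\) back to a \(\circ_n\)-factorisation of \(p\). Your explicit lift, which fixes one preimage \(a\) of \(s\) and takes the cofactor \(r = p - at\), is what the paper's one-line appeal to Lemma~\ref{lem:norm_div} implicitly relies on. It rightly avoids assuming \(\Sigma_n\) is injective.
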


    \begin{proof}
      By Lemma \ref{lem:norm_div}, \(p = a\circ_n b\) if and only if \(\Sigma_n(p) =
      \Sigma_n(a)\Sigma_n(b)\), and \(a \in \nEl(R)^{\times}\) if and only if
      \(\Sigma_n(a) \in R^{\times}\) by Theorem \ref{le:units_general}.
    \end{proof}

    \begin{Le}[Coprimality lemma] \label{lem:coprime}
      For any \(r\in R\) in a UFD \(R\), we have \(\gcd(\Sigma_n(r), n+1) = 1\).
    \end{Le}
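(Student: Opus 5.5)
The statement reduces to a one-line Bézout identity. By definition \(\Sigma_n(r) = 1 - (n+1)r\), so
\[
  \Sigma_n(r) + r\cdot(n+1) \;=\; 1
\]
holds in \(R\). Hence the ideal generated by \(\Sigma_n(r)\) and \(n+1\) is all of \(R\). I will then translate this into the gcd statement in the UFD \(R\).

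The first step is to recall what \(\gcd(\Sigma_n(r), n+1) = 1\) means in a UFD: every common divisor is a unit, equivalently no prime element divides both. The second step is the standard argument. Suppose \(d\in R\) divides both \(\Sigma_n(r)\) and \(n+1\), say \(\Sigma_n(r) = d s\) and \(n+1 = d t\). Substituting into the identity above gives
\[
  d\,(s + rt) \;=\; 1,
\]
so \(d \in R^\times\). Therefore every common divisor is a unit, which is exactly the claim that the gcd is \(1\) up to units.

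Note that the UFD hypothesis is only needed so that the gcd is well defined. The real content is the comaximality \((\Sigma_n(r)) + (n+1) = R\), which holds in any ring. This is the same observation already used in Proposition~\ref{prop:J_ideal_obs}(v): a prime dividing \(n+1\) cannot divide a norm.

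I do not expect any genuine obstacle. The only point needing care is a degenerate case. If \(n+1\) is \(0\) in \(R\), then \(\gcd(\Sigma_n(r), 0) = \Sigma_n(r)\). This is a unit precisely because the identity then reads \(\Sigma_n(r) = 1\). If instead \(n+1\) is a unit, the statement is trivial. Either way the argument above covers these cases uniformly, since it never divides by \(n+1\).
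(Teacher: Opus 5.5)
Your proposal is correct and follows the paper's own argument. The paper simply observes that any prime common factor of \(\Sigma_n(r)\) and \(n+1\) would divide \(1 = \Sigma_n(r) + (n+1)r\). You run the same Bézout identity with an arbitrary common divisor \(d\). That version is marginally cleaner and, as you note, shows the comaximality \((\Sigma_n(r)) + (n+1) = R\) in any commutative ring.
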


    \begin{proof}
      Any prime factor of both \(\Sigma_n(p)\) and \(n+1\) would divide \(1\).
    \end{proof}

    \begin{Pro}
      Let \(R\) be a UFD and \(n+1 \notin R^{\times}\). For a non-zero non-unit \(p \in
      \nEl(R)\), the following are equivalent:
      \begin{enumerate}
        \item[\rm(i)] \(p\) is \(\circ_n\)-prime.
        \item[\rm(ii)] \(\Sigma_n(p)\) is irreducible in \(R\).
      \end{enumerate}
    \end{Pro}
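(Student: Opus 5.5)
For the direction (ii)\(\Rightarrow\)(i), I will use that in a UFD every irreducible element is prime. So if \(\Sigma_n(p)\) is irreducible in \(R\), it is a prime element of \(R\), and Corollary~\ref{cor:prime_suff} gives that \(p\) is \(\circ_n\)-prime in \(\nEl(R)\). Note that \(\Sigma_n(p)\neq 0\): otherwise \((n+1)p=1\), which contradicts \(n+1\notin R^\times\).

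For (i)\(\Rightarrow\)(ii), I argue by contraposition using Proposition~\ref{prop:prime_general}. Let \(q=\Sigma_n(p)\). It is nonzero as above, and it is a non-unit by Lemma~\ref{le:units_general}, because \(p\) is not a \(\circ_n\)-unit. Suppose \(q=ab\) with \(a,b\) non-units of \(R\). The goal is to produce \(s,t\in 1+(n+1)R\) with \(q\mid st\) but \(q\nmid s\) and \(q\nmid t\); this contradicts the primality of \(q\) in the monoid \((1+(n+1)R,\cdot,1)\). The natural choice is \(s=ax\) and \(t=by\), since then \(q=ab\mid st\) automatically. The remaining requirements are:
\begin{itemize}
  \item \(ax\equiv 1\) and \(by\equiv 1 \pmod{(n+1)}\);
  \item \(b\nmid x\) and \(a\nmid y\), since for example \(q\mid ax\) iff \(b\mid x\), \(R\) being a domain and \(a\neq 0\).
\end{itemize}

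For the congruence I need \(a\) (and likewise \(b\)) to be invertible modulo \(n+1\). The Coprimality Lemma~\ref{lem:coprime} gives \(\gcd(q,n+1)=1\), and hence \(\gcd(a,n+1)=\gcd(b,n+1)=1\). If \(x_0\) is any solution of \(ax_0\equiv1\), then every \(x=x_0+(n+1)m\) is also a solution. Not all of these can be divisible by \(b\): if they were, then \(b\) would divide the difference \(n+1\), and together with \(\gcd(b,n+1)=1\) this forces \(b\) to be a unit, which is a contradiction. So some \(x\) in this class satisfies \(b\nmid x\), and symmetrically some \(y\) satisfies \(a\nmid y\).

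The main obstacle is the step from \(\gcd(a,n+1)=1\) to invertibility of \(a\) modulo \(n+1\). This needs \((a)+(n+1)=R\), which holds in a PID such as \(\Z\) (the case of interest) but not in an arbitrary UFD, e.g.\ \(k[x,y]\). I would first try to make the argument work when the class of \(a\) in \(R/(n+1)\) is not a unit. One option is to replace \(s=ax\) by \(s=a^kx\) or \(s=a\cdot c\) for a suitably chosen factor \(c\). Another is to use Proposition~\ref{prop:prime_general} only with elements already known to lie in \(1+(n+1)R\), for instance \(s=q\cdot u+1\)-type constructions. If this cannot be done, I would state the proposition under the additional hypothesis that \(R\) is a PID, or that every element coprime to \(n+1\) is invertible modulo \(n+1\); this suffices for all later applications to \(\nEl(\Z)\).
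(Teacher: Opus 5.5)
\textbf{The gap.} The proof is not finished as written. You stop at what you call the main obstacle, the step from \(\gcd(a,n+1)=1\) to invertibility of \(a\) modulo \(n+1\), and you propose adding a PID hypothesis. That obstacle is not real, and the fact that removes it is the missing step. You never use that \(q\) itself lies in \(1+(n+1)R\). From \(ab=q=1-(n+1)p\) you get
\[
1 \;=\; ab+(n+1)p .
\]
So \(ab\equiv 1\pmod{(n+1)}\): \(a\) and \(b\) are invertible modulo \(n+1\), each an inverse of the other, and \((a)+(n+1)R=(b)+(n+1)R=R\) in any ring. The gcd-versus-comaximality issue you raise for \(k[x,y]\) therefore never arises.

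\textbf{Completing the argument.} Take \(x=b+(n+1)\) and \(y=a+(n+1)\), that is,
\[
s \;=\; q+(n+1)a,\qquad t \;=\; q+(n+1)b .
\]
Both lie in \(1+(n+1)R\), and \(st=q\,(a+n+1)(b+n+1)\), so \(q\mid st\). If \(q\mid s\), cancelling \(a\neq 0\) gives \(b\mid n+1\). Then \(b\mid q+(n+1)p=1\), contradicting that \(b\) is a non-unit. Symmetrically \(q\nmid t\), which contradicts Proposition~\ref{prop:prime_general}. For instance, with \(n=4\) and \(q=\Sigma_4(-1)=6=2\cdot 3\), this gives \(s=16\) and \(t=21\). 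The argument uses only that \(R\) is a domain. So (i)\(\Rightarrow\)(ii) holds in any integral domain with \(n+1\notin R^\times\), and the UFD hypothesis is needed only for your (ii)\(\Rightarrow\)(i), which is fine as written.

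\textbf{Comparison with the paper.} The paper argues (i)\(\Rightarrow\)(ii) differently. It tries to show directly that \(\Sigma_n(p)\) is prime in \(R\) by writing \(a=\Sigma_n(a')u\) and \(b=\Sigma_n(b')v\), where \(u,v\) are products of primes outside \(1+(n+1)R\). It then asserts \(\Sigma_n(p)\mid\Sigma_n(a')\Sigma_n(b')\) ``by the coprimality lemma''. That step would need \(\Sigma_n(p)\) to be coprime to \(uv\), which Lemma~\ref{lem:coprime} does not give, since it concerns only \(n+1\). For example, \(6=\Sigma_4(-1)\) divides \(2\cdot 3\) with \(u=2\), \(v=3\) and \(\Sigma_n(a')=\Sigma_n(b')=1\). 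Your construction of explicit \(s,t\) is the general-ring version of the CRT argument in Proposition~\ref{prop:prime_iff}. It avoids this problem and, with the observation above, gives a complete proof.
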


    \begin{proof}
      (i)\(\Rightarrow\)(ii):\enspace We need to show that \(p\) is prime in \(R\), the
      fact that \(R\) is a UDF will imply irreducibility. Suppose \(\Sigma_n(p)\mid ab\)
      in \(R\). Since \(R\) is a UFD, we can factorise elements in primes. We groups
      these by ones which are \(1\pmod{n+1}\) and these which are not \(1\pmod{n+1}\).
      Since the product of these primes that are \(1\pmod{n+1}\) are again of like-from,
      we can write \(a = \Sigma_n(a')u\) and \(b = \Sigma_n(b')v\) where \(u,v\) are
      products of prime factors of \(R\) not lying in \(1 + (n+1)R\).

      By the coprimality lemma~\ref{lem:coprime}, \(\Sigma_n(p) \mid \Sigma_n(a')
      \Sigma_n(b')\). By Proposition~\ref{prop:prime_general}, \(p\) we know that
      \(\Sigma_n(p)\) is prime in the submonoid \((1 + (n+1)R, \cdot, 1) \subseteq R\)
      and thus, without loss of generality, \(\Sigma_n(p)\mid\Sigma_n(a')\), giving
      \(\Sigma_n(p)\mid a\). As such, \(\Sigma_n(p)\) is a prime as \(R\) is a UFD, it is
      irreducible as well.

      (ii)\(\Rightarrow\)(i):\enspace This is just Corollary~\ref{cor:prime_suff}.
    \end{proof}

\subsection{Idempotents}

    \begin{Le}
      The \(\circ_n\)-idempotents of \(\nEl(R)\) are exactly those \(a \in R\) that
      satisfy \(a \cdot \Sigma_n(a) = 0\) in \(R\). In particular:
      \begin{itemize}
        \item[(i)] \(0\) is always an idempotent;
        \item[(ii)] If \(n+1 \in R^{\times}\), then \(1/(n+1)\) is idempotent;
        \item[(iii)] If \(R\) is an integral domain, the only \(\circ_n\)-idempotents are
          \(0\) and \(1/(n+1)\) (if \(n+1 \in R^{\times}\)).
      \end{itemize}
    \end{Le}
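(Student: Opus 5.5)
The plan is a direct computation from the definition of \(\circ_n\), followed by reading off the three special cases. First, \(a\) is a \(\circ_n\)-idempotent exactly when \(a\circ_n a = a\). Expanding gives
\[
  a \circ_n a \;=\; 2a - (n+1)a^2 .
\]
So the idempotency condition becomes \(a - (n+1)a^2 = 0\), that is \(a\bigl(1-(n+1)a\bigr) = 0\), which is precisely \(a\cdot\Sigma_n(a) = 0\). Every step here is an equivalence, so this proves the main characterisation in both directions.

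The three items then follow from the characterisation.

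\begin{itemize}
  \item[(i)] Take \(a = 0\). The product \(0\cdot\Sigma_n(0)\) is trivially \(0\), so \(0\) is idempotent.
  \item[(ii)] If \(n+1\in R^\times\), put \(z = 1/(n+1)\). Lemma~\ref{le:zero}(iii) gives \(\Sigma_n(z) = 0\), so \(z\cdot\Sigma_n(z) = 0\). Alternatively, Lemma~\ref{le:zero}(i) with \(a = z\) gives \(z\circ_n z = z\) directly.
  \item[(iii)] Let \(R\) be an integral domain. From \(a\,\Sigma_n(a) = 0\) we get \(a = 0\) or \(1-(n+1)a = 0\). The second equation says \((n+1)a = 1\). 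This forces \(n+1\) to be a unit with \(a = 1/(n+1)\), and then \(a\) is unique, because \(n+1\) is a unit and hence not a zero divisor. If \(n+1\notin R^\times\), the second equation has no solution, so \(0\) is the only idempotent. This is the same as the description of \(J((0))\) in Proposition~\ref{prop:J_ideal_obs}(ii).
\end{itemize}

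I do not expect a real obstacle. The only point needing care is in (iii): one must observe that \((n+1)a = 1\) by itself forces \(n+1\) to be invertible, which is why the parenthetical condition on \(1/(n+1)\) is correct.
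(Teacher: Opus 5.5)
Your proposal is correct and follows the paper's proof: expand $a\circ_n a = a$ to $a - (n+1)a^2 = 0$, i.e.\ $a\,\Sigma_n(a)=0$, and in an integral domain split into $a=0$ or $\Sigma_n(a)=0$. Your remark that $(n+1)a=1$ by itself forces $n+1\in R^\times$ makes explicit a point the paper leaves implicit in (iii).
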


    \begin{proof}
      Being an \(\circ_n\)-idempotent mean \(a \circ_n a = a\), which expands to \(a -
      (n+1)a^2 = 0\). We get \(a\cdot\Sigma_n(a) = 0\). In an integral domain, \(a = 0\)
      or \(\Sigma_n(a) = 0\). The latter requires \(a = 1/(n+1)\) if \(n+1 \in
      R^{\times}\).
    \end{proof}

\section{Arithmetic of \(\nEl(\Z)\)}\label{sec:arith_Z}

  We now specialise to \(\nEl(\Z)\). Here, the general theory takes a particularly
  clean and explicit form.

\subsection{Units}

    \begin{Le} \label{lem:units}
      The only \(\circ_n\)-unit in \(\nEl(\Z)\) is \(0\). Equivalently, \(|\Sigma_n(a)| =
      1\) if and only if \(a = 0\) and, in particular, \(\Sigma_n(a) \neq -1\) for any
      \(a \in \Z\).
    \end{Le}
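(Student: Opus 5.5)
By Lemma~\ref{le:units_general}, an element \(a\in\nEl(\Z)\) is a \(\circ_n\)-unit if and only if \(\Sigma_n(a)\in\Z^\times=\{\pm1\}\). The statement therefore reduces to solving \(1-(n+1)a=\pm1\) over \(\Z\). I will handle the two signs separately. The argument is a direct computation and I expect no step to be a real obstacle.

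\emph{Case \(\Sigma_n(a)=1\).} Here \((n+1)a=0\). Since \(n\geq 2\), we have \(n+1\neq 0\) in \(\Z\). Hence \(a=0\), which is also the injectivity of \(\Sigma_n\) from Lemma~\ref{lem:sigma_image}.

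\emph{Case \(\Sigma_n(a)=-1\).} Here \((n+1)a=2\), so \(n+1\) divides \(2\) in \(\Z\). This is impossible because \(n+1\geq 3\). An equivalent way to say this uses Lemma~\ref{lem:sigma_image}: \(-1\in\im(\Sigma_n)=1+(n+1)\Z\) would force \(n+1\mid 2\). This is the only place the hypothesis \(n\geq2\) is used, and it is the one point to state carefully. For \(n=1\) the value \(-1=\Sigma_1(1)\) would occur.

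Combining the two cases, \(|\Sigma_n(a)|=1\) holds exactly when \(a=0\), and \(\Sigma_n(a)\neq -1\) for every \(a\in\Z\). Applying Lemma~\ref{le:units_general} once more, \(0\) is the only \(\circ_n\)-unit. This also agrees with \(0\) being the \(\circ_n\)-identity, since \(0\circ_n 0=0\).
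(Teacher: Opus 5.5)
Your proposal is correct and takes essentially the same route as the paper: you reduce to $\Sigma_n(a)=\pm 1$ (the paper via the multiplicativity of $\Sigma_n$ in Lemma~\ref{lem:mult}, you via Lemma~\ref{le:units_general}), then observe that $(n+1)a=0$ forces $a=0$ and that $(n+1)a=2$ has no integer solution for $n\geq 2$. Your explicit check that $0$ is itself a unit and your remark about $n=1$ are harmless additions.
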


    \begin{proof}
      By Lemma~\ref{lem:mult}, \(\circ_n\)-units map to units in \(\Z\). Hence, we have
      \(\Sigma_n(a) = \pm 1\). If \(\Sigma_n(a) = 1\), then \((n+1)a = 0\) and so \(a =
      0\). On the other hand, if \(\Sigma_n(a) = -1\) then \((n+1)a = 2\). But this has
      no integer solution for \(n \geq 2\).
    \end{proof}

    \begin{Cor} \label{cor:sharp}
      The monoid \((\Z, \circ_n, 0)\) is a sharp (only unit is the identity).
    \end{Cor}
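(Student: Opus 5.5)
This follows directly from Lemma~\ref{lem:units}. Sharpness means that the group of units \((\Z,\circ_n,0)^{\times}\) is the trivial group \(\{0\}\), so we need two facts: \(0\) is the identity of \(\circ_n\), and no other element is invertible.

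The first fact is part of the construction of \(\nEl(\Z)\). Concretely, \(a\circ_n 0 = a + 0 - (n+1)a\cdot 0 = a\), so \(0\) is the \(\circ_n\)-unit and in particular a unit.

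For the second, let \(u\) be a \(\circ_n\)-unit, so there is \(v\) with \(u\circ_n v = 0\). By Lemma~\ref{lem:mult}, \(\Sigma_n(u)\Sigma_n(v)=\Sigma_n(0)=1\) in \(\Z\), hence \(\Sigma_n(u)=\pm1\).
\begin{itemize}
  \item If \(\Sigma_n(u)=1\), then \((n+1)u=0\), and since \(n+1\neq 0\) in \(\Z\) this forces \(u=0\).
  \item If \(\Sigma_n(u)=-1\), then \((n+1)u=2\). This has no integer solution, because \(n+1\ge 3\) cannot divide \(2\).
\end{itemize}
This is exactly Lemma~\ref{lem:units}, which can be quoted directly.

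There is no real obstacle here. The only points needing care are that the hypothesis \(n\ge 2\) is what excludes the norm \(-1\), and that Lemma~\ref{le:units_general} already reduces \(\circ_n\)-units to elements of norm in \(\Z^{\times}=\{\pm1\}\). As a consequence, combined with the cancellativity of \(\nEl(\Z)\) (Proposition~\ref{prop:cancel_char}, since \(\Z\) is a domain), mutual \(\circ_n\)-divisibility will force equality of elements.
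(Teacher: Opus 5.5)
Your proposal is correct and follows the paper's route: the corollary is read off directly from Lemma~\ref{lem:units}. Your argument reproduces that lemma's proof: multiplicativity of \(\Sigma_n\) forces \(\Sigma_n(u)=\pm1\), norm \(1\) gives \(u=0\), and norm \(-1\) is impossible because \((n+1)u=2\) has no integer solution for \(n\ge 2\).
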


\subsection{Primes and irreducibles in \(\nEl(\Z)\)}

    For \(\circ_n\)-\emph{primes}, the equivalence with classical primes holds in full
    generality, and is the key result linking \(\nEl(\Z)\) to classical number theory.

    \begin{Pro} \label{prop:prime_iff}
      A non-zero non-unit \(p \in \Z\) is \(\circ_n\)-prime if and only if
      \(|\Sigma_n(p)|\) is a classical prime. In this case, \(|\Sigma_n(p)| \equiv \pm 1
      \pmod{n+1}\).
    \end{Pro}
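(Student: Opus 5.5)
The plan is to reduce everything to $\Z$ through the norm. Set $m=\Sigma_n(p)=1-(n+1)p$. Since $n+1\geq 3$ does not divide $1$, we have $m\neq 0$. By Lemma~\ref{lem:units}, $|m|\neq 1$ because $p\neq 0$. So $m$ is a nonzero nonunit of $\Z$, and $m\equiv 1\pmod{n+1}$. The final assertion then follows at once: $|m|\in\{m,-m\}$, so $|m|\equiv\pm 1\pmod{n+1}$.

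For the direction ``$\Leftarrow$'': if $|m|$ is a classical prime, then $m$ is a prime element of $\Z$. Corollary~\ref{cor:prime_suff} then gives that $p$ is $\circ_n$-prime.

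For the direction ``$\Rightarrow$'' I would use Proposition~\ref{prop:prime_general}. It says $p$ is $\circ_n$-prime exactly when $m$ is prime in the monoid $(1+(n+1)\Z,\cdot,1)$. I prove the contrapositive: if $|m|$ is not prime, I exhibit $s,t\in 1+(n+1)\Z$ with $m\mid st$ but $m\nmid s$ and $m\nmid t$. One could quote the preceding UFD proposition, but its argument is somewhat informal, so I would give a direct construction instead.

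Write $m=q\cdot r$ with $q$ a prime and $|r|>1$; this is possible since $|m|$ is composite. By the coprimality Lemma~\ref{lem:coprime}, $\gcd(m,n+1)=1$, so $q$ and $r$ are invertible modulo $n+1$. By the Chinese remainder theorem, choose integers $c,d$ with
\[
  c\equiv q^{-1}\pmod{n+1},\quad c\equiv 1\pmod{m},\qquad
  d\equiv r^{-1}\pmod{n+1},\quad d\equiv 1\pmod{m}.
\]
Put $s=qc$ and $t=rd$; both lie in $1+(n+1)\Z$. Clearly $m=qr$ divides $st=qr\,cd$. If $m\mid s=qc$, then $r\mid c$. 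But $\gcd(c,m)=1$ and $|r|>1$ with $r\mid m$, a contradiction. Symmetrically, $m\mid t$ would force $q\mid d$, again impossible since $\gcd(d,m)=1$. Hence $m$ is not prime in $1+(n+1)\Z$, and $p$ is not $\circ_n$-prime.

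The one delicate point is exactly this construction. The case $m=\pm q^k$ with $k\geq 2$ has to be covered as well, and the split $q\cdot r$ with $r=\pm q^{k-1}$ handles it uniformly. The CRT choice $c,d\equiv 1\pmod m$ is what ensures that the correcting factors do not reintroduce the missing prime factors.
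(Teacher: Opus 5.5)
Your proof is correct and is essentially the paper's argument. In both, the converse direction is Corollary~\ref{cor:prime_suff}, and for the forward direction $\gcd(\Sigma_n(p),n+1)=1$ plus the Chinese remainder theorem yields $s,t\in 1+(n+1)\Z$ congruent modulo $m$ to the two factors of a composite norm, so that $m\mid st$ but $m\nmid s$ and $m\nmid t$. The differences are cosmetic: the paper picks $s\equiv d$, $t\equiv e \pmod{m}$ directly and reads off non-divisibility from $1<d,e<|m|$, whereas your $s=qc$, $t=rd$ satisfy the same congruences and you argue via $\gcd(c,m)=\gcd(d,m)=1$; you also check the $\pm 1$ congruence explicitly.
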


    \begin{proof}
      \((\Leftarrow)\) Lemma~\ref{cor:prime_suff}.

      \((\Rightarrow)\) Let \(|\Sigma_n(p)| = m = d\cdot e\) with \(1 < d, e < m\). By
      Lemma~\ref{lem:coprime}, \(\gcd(m, n+1) = 1\), so by the Chinese Remainder Theorem,
      the arithmetic progression \(1 + (n+1)\Z\) meets every residue class modulo \(m\).
      Choose \(s \equiv d \pmod{m}\) and \(t \equiv e \pmod{m}\) with \(s, t \in 1 +
      (n+1)\Z\). By Lemma~\ref{lem:sigma_image}, there exist \(a, b \in \Z\) with
      \(\Sigma_n(a) = s\) and \(\Sigma_n(b) = t\). Then \(\Sigma_n(p) \mid st =
      \Sigma_n(a)\Sigma_n(b)\), meaning \(p \mid ab\), but \(\Sigma_n(p) \nmid
      \Sigma_n(a)\) and \(\Sigma_n(p) \nmid \Sigma_n(b)\) (since \(d, e < m\)) and thus,
      \(p\nmid a\) and \(p\nmid b\), so \(p\) is not \(\circ_n\)-prime.
    \end{proof}

    \begin{Rem}
      We showed in Proposition \ref{pro:factorisation_irred} that every element can be
      written as a product of irreducibles. The same statement does {\bfseries not} hold
      for \(\circ_n\)-primes in general. We already saw a counterexample in
      Example~\ref{ex:main_counter_example} for \(\nEl(\Z)\):

      The element \(-1 \in \nEl(\Z)\) for \(n=4\) is \(\circ_4\)-irreducible but not
      \(\circ_4\)-prime, and has no factorisation into \(\circ_4\)-primes.
    \end{Rem}

    \begin{Pro}[Uses Dirichlet] \label{pro:irred=prime}
      The \(\circ_n\)-irreducible elements agree with the \(\circ_n\)-primes in
      \(\nEl(\Z)\) if and only if \((\Z/(n+1)\Z)^{\times} \cong \{\pm 1\}\).
    \end{Pro}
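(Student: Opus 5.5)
Let \(H = 1+(n+1)\Z = \im(\Sigma_n)\). By the proposition characterising \(\circ_n\)-irreducibles, \(a\) is \(\circ_n\)-irreducible exactly when \(\Sigma_n(a)\) is irreducible in the multiplicative monoid \((H,\cdot,1)\), with only \(\pm1\) as possible units, and \(-1\notin H\) by Lemma~\ref{lem:units}. By Proposition~\ref{prop:prime_iff}, \(a\) is \(\circ_n\)-prime exactly when \(|\Sigma_n(a)|\) is a classical prime. Every \(\circ_n\)-prime is \(\circ_n\)-irreducible, because \(\nEl(\Z)\) is \(\circ_n\)-cancellative: nonzero integers are not zero divisors, and \(0\notin H\) since \(n\ge 2\). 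So the statement reduces to this: every element \(m\in H\setminus\{1\}\) that is irreducible in \(H\) has \(|m|\) prime if and only if \((\Z/(n+1)\Z)^\times=\{\pm1\}\).

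(\(\Leftarrow\)) Assume \((\Z/(n+1)\Z)^\times=\{\pm1\}\). Let \(m\in H\), \(m\neq 1\), be \(H\)-irreducible, and suppose \(|m|\) is composite. By Lemma~\ref{lem:coprime} every prime factor \(q\) of \(m\) is coprime to \(n+1\), so \(q\equiv \pm1 \pmod{n+1}\). Hence each factor \(\pm q\) can be signed so that it lies in \(H\).
\begin{itemize}
\item Write \(m=\epsilon q_1\cdots q_k\) with \(k\ge 2\), where each \(q_i\) is a signed prime in \(H\).
\item Reducing mod \(n+1\) gives \(\epsilon\equiv 1\). Since \(-1\not\equiv 1\) for \(n\ge 2\), we get \(\epsilon=1\).
\item Then \(m=q_1\cdot(q_2\cdots q_k)\) is a factorisation in \(H\) into two non-units, a contradiction.
\end{itemize}
So \(|m|\) is prime. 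This direction needs no Dirichlet.

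(\(\Rightarrow\)) Assume there is a residue \(c\in(\Z/(n+1)\Z)^\times\) with \(c\neq\pm1\). We must produce an \(H\)-irreducible \(m\) with \(|m|\) composite. Let \(d\) be the inverse of \(c\). Since \(c\neq \pm1\), we have \(c\neq d\) or \(c^2=1\); in either case \(c,d\not\equiv\pm1\). By Dirichlet's theorem, choose distinct positive primes \(q\equiv c\) and \(r\equiv d \pmod{n+1}\), and set \(m=qr\equiv 1\), so \(m\in H\).

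We check that \(m\) is \(H\)-irreducible. A nontrivial factorisation \(m=st\) with \(s,t\in H\) and \(s,t\neq 1\) forces \(|s|,|t|\in\{q,r\}\), so \(s=\pm q\). But \(\pm q\equiv \pm c\not\equiv 1\), so \(s\notin H\), a contradiction.

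Hence \(a=\Sigma_n^{-1}(m)\) is \(\circ_n\)-irreducible but not \(\circ_n\)-prime. Example~\ref{ex:main_counter_example} is this construction with \(n=4\), \(c\equiv2\), \(d\equiv3\), \(m=6\); there the primes need not be large.

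The main obstacle is this existence step: guaranteeing primes in the prescribed nontrivial residue classes is exactly Dirichlet's theorem, hence the label. The remaining points to handle carefully are the sign bookkeeping (using \(-1\notin H\)) and the edge case \(c^2\equiv1\).
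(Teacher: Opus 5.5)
Your proof is correct and follows essentially the same route as the paper: both reduce, via the norm \(\Sigma_n\), to irreducibility in the monoid \(1+(n+1)\Z\). Both then use Dirichlet to produce primes in the classes \(c\) and \(c^{-1}\), whose product is irreducible but not prime, and for the converse both sign-adjust the prime factors of a composite norm so that they lie in \(1+(n+1)\Z\). Your version is slightly more careful than the paper's in two places. You require \(c\not\equiv\pm1\), whereas the paper only asks \(u\neq 1\); \(u=-1\) would not work, because \(pp'=(-p)(-p')\) then factors in \(1+(n+1)\Z\). You also track signs explicitly, where the paper incorrectly asserts \(|a|\equiv|b|\equiv 1\) and leaves the signs to ``trivial arguments''.
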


    \begin{proof}
      We need to show that irreducibles are prime (since the reverse is always true). For
      this, it suffices to show that their norms are prime, thanks to
      Propositions~\ref{prop:prime_iff}. Meaning me must show that every irreducible has
      prime norm if and only if \((\Z/(n+1)\Z)^{\times} \cong \{\pm 1\}\). The norm
      reduces this question solely to the submonoid \(M_n := (1+(n+1)\Z,\cdot,1)\) via
      Lemma~\ref{lem:sigma_image}.

      \noindent {\itshape Necessity:} Suppose there exists \(1\neq u \in
      (\Z/(n+1)\Z)^{\times}\). By Dirichlet, there exist classical primes \(p\equiv
      u\pmod{n+1}\) and \(p'\equiv u^{-1}\pmod{n+1}\). Let \(q\) be their product. It
      lies in the image of \(\Sigma_n\) as \(q \equiv u\cdot u^{-1} \pmod{n+1} = 1\pmod
      {n+1}\). Hence, there exists an element \(a\in \nEl(\Z)\) such that \(\Sigma_n(a) =
      q\). By construction, \(a\) can not be \(\circ_n\)-prime. But it is clearly
      \(\circ_n\)-irreducible since its norm is the product of primes, which are not
      realisable as norms of elements of \(\nEl(\Z)\).\\

      \noindent {\itshape Sufficiency:} Let \((\Z/(n+1)\Z)^{\times} = \{\pm 1\}\) and \(q
      \equiv 1\pmod{n+1}\) be the norm of an \(\circ_n\)-irreducible element \(a \in
      \nEl(\Z)\). We have to show that \(|q|\) is a classical prime. Assume \(|q| = ab\)
      with \(1 < a, b < |q|\). Since \(|q| = ab\) is coprime to \(n+1\)
      (Lemma~\ref{lem:coprime}), the factors \(a\) and \(b\) must likewise be coprime.
      Moreover, \(|a| \equiv |b| \equiv 1\pmod{n+1}\) by our assumption on the unit
      group. Trivial arguments about the sign of \(a\) and \(b\) and the fact that the
      norm map \(\Sigma_n\) bijects \(\Z\) onto \(1 + (n+1)\Z\) now implies that \(a, b\)
      or \(-a, -b\) lift to elements in \(\nEl(\Z)\) and will thus give us a
      factorisation of \(q\), contradicting our assumption.
    \end{proof}

\section{Infinitely many \(\circ_n\)-primes and primes in arithmetic
  progressions}\label{sec:dirichlet}

  We now turn to the main parts of this paper, the links of \(n\)-ary ring theory to
  classic arithmetic and, in particular, giving a proof that the existence of
  infinitely many \(\circ_n\)-primes (Euclid's theorem for \(n\)-ary primes) is
  equivalent to Dirichlet's theorem on arithmetic progressions for the special case of
  \(an + 1\). This link is the main reason I believe a deeper exploration of \(n\)-ary
  elliptic algebra might be warranted.

\subsection{Infinitely many \(\circ_n\)-irreducibles}

    While I was not able to prove an equivalent theorem for \(\circ_n\)-primes without
    invoking Dirichlet, below is an Euclid-style proof for \(\circ_n\)-irreducibles.

    \begin{Th}[Euclid's theorem for \(\circ_n\)-irreducibles] \label{thm:euclid_irred}
      Any \(n\neq 0\) has a factorisation in \(\circ_n\)-irreducibles. In particular,
      there are infinitely many \(\circ_n\)-irreducibles in \(\nEl(\Z)\).
    \end{Th}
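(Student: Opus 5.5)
The plan is to transport everything to the multiplicative monoid \(M_n := (1+(n+1)\Z,\cdot,1)\) via the norm. By Lemma~\ref{lem:sigma_image}, \(\Sigma_n\) is a bijection \(\nEl(\Z)\to M_n\), and by Lemma~\ref{lem:mult} it is a monoid isomorphism \((\Z,\circ_n,0)\cong M_n\). Note that \(0\) is the \(\circ_n\)-unit (and the only one, by Lemma~\ref{lem:units}). So the statement ``any \(n\neq 0\) has a factorisation'' should be read as: every non-unit \(a\in\nEl(\Z)\), i.e.\ every \(a\neq 0\), is a finite \(\circ_n\)-product of \(\circ_n\)-irreducibles. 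Equivalently, every \(m\in M_n\) with \(m\neq 1\) is a finite product of irreducibles of \(M_n\); here the only unit of \(M_n\) is \(1\), since \(-1\notin M_n\) for \(n\geq 2\).

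\emph{Existence} goes by strong induction on \(|m|\). If \(m\neq 1\) is reducible in \(M_n\), write \(m=st\) with \(s,t\in M_n\setminus\{1\}\). Then \(|s|,|t|\geq 2\): indeed \(0\notin M_n\), and \(\pm1\in M_n\) forces the value \(1\). Hence \(|s|,|t|<|m|\). By induction \(s\) and \(t\) factor into irreducibles, and so does \(m\). The base case is automatic, since the smallest \(|m|>1\) in \(M_n\) is irreducible. Pulling back along \(\Sigma_n\) gives the factorisation in \(\nEl(\Z)\). Alternatively, one can cite Proposition~\ref{pro:factorisation_irred} after checking ACCP, which is the same bound: a strict inclusion of principal ideals strictly decreases \(|\Sigma_n|\), by Lemma~\ref{lem:norm_div} and sharpness (Corollary~\ref{cor:sharp}).

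\emph{Infinitude} is the Euclid step. Suppose the \(\circ_n\)-irreducibles are \(p_1,\dots,p_k\), with norms \(q_i=\Sigma_n(p_i)\), each satisfying \(|q_i|\geq 2\) and \(q_i\equiv 1 \pmod{n+1}\). Set \(N:=q_1\cdots q_k\), so \(N\in M_n\) with \(|N|\geq 2\), and consider \(m:=1-(n+1)N\). Then \(m\in M_n\), and \(m\neq 1\) because \(N\neq 0\). So \(m=\Sigma_n(a)\) for some \(a\neq 0\), and by the first part \(a\) is a \(\circ_n\)-product of irreducibles. In particular some \(p_i\) divides \(a\), so \(q_i\mid m\) in \(\Z\) by Lemma~\ref{lem:norm_div}. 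But \(q_i\mid N\) as well, so \(q_i\mid 1\), which contradicts \(|q_i|\geq 2\). If there were no irreducibles at all, take \(m=-n\) (the norm of \(1\)), which is nonzero and not \(1\), and so has an irreducible factor.

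The only point needing care is the choice of the Euclid number. The naive \(N+1\) does not lie in \(1+(n+1)\Z\), hence is not a norm. Using \(1-(n+1)N\), or equivalently \(\Sigma_n(N)\), repairs this, and the coprimality to each \(q_i\) is immediate. Beyond that, the main obstacle is only bookkeeping about signs: the norms can be negative, so the induction runs on \(|m|\) and uses \(-1\notin M_n\).
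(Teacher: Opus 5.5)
Your proof is correct. The factorisation part is the same descent on \(|\Sigma_n|\) that the paper uses; the difference lies in the Euclid step.

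In your notation, the paper forms \(P = p_1\circ_n\cdots\circ_n p_k\) and takes the element \(1-P = P\ast 0\ast\cdots\ast 0\) of \(\nEl(\Z)\). It obtains coprimality from the sum identity \(\Sigma_n(P)+\Sigma_n(1-P)=1-n\) (Lemma~\ref{lem:sum}), combined with the bound \(|\Sigma_n(q)|\ge n>n-1\) for every non-unit \(q\). It must also treat separately the case where \(1-P\) is the unit \(0\) (i.e.\ \(P=1\)), which it does via \(\Sigma_n(2)=-(2n+1)\) and \(\gcd(n,2n+1)=1\).

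Your Euclid element is instead the integer \(N=\prod_i q_i=\Sigma_n(P)\), read back as an element of \(\nEl(\Z)\). Its norm \(1-(n+1)N\) is \(\equiv 1\) modulo every \(q_i\), so coprimality is the classical one-line argument. No case split is needed, since \(N\neq 0\) always.

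Your route is shorter and avoids both the sum identity and the norm bound. The paper's route has the merit that the new element is built from \(P\) by the \(n\)-ary operations themselves, the analogue of passing from \(N\) to \(N+1\). Yours instead feeds a norm, which lives in the base ring \(\Z\), back into \(\nEl(\Z)\) through the identification of underlying sets, and that identification is special to the standard construction.
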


    \begin{proof}
      \textit{Factorisation in irreducibles.} Every non-zero \(a_0\in \Z\) is either
      \(\circ_n\)-irreducible or \(a_0 = a_1 \circ_n b_0\) with \(a_1, b_0 \neq 0\). To
      avoid confusion, we call again that \(0\in\Z\) corresponds to the sole unit of
      \(\nEl(\Z)\) (Lemma \ref{lem:units}) and as \(n+1\) is not invertible in
      \(\nEl(\Z)\), we have no absorbing (\(\circ_n\)-zero) element.

      Since the monoid is sharp by Corollary~\ref{cor:sharp}, we have \(|\Sigma_n(b_0)| >
      1\) and \(|\Sigma_n(a_1)| < |\Sigma_n(a_0)|\). This descent must terminate,
      yielding a factorisation into \(\circ_n\)-irreducibles. \\

      \noindent\textit{Infinitely many.} Suppose \(\{q_1, \ldots, q_r\}\) is a complete
      finite list of irreducibles. Set
      \[ N \;:=\; q_1 \circ_n \cdots \circ_n q_r \,\qtext{and}\, M \;:=\; 1 - N. \]

      \medskip\noindent\textbf{Case 1: \(M = 0\).}\enspace It follows \(N = 1\), so
      \(\Sigma_n(N) = \Sigma_n(1) = -n\). Consider
      \[ \Sigma_n(2) \;=\; 1 - 2(n+1) \;=\; -(2n+1), \]
      which gives us \(|\Sigma_n(2)| = 2n+1 > 1\). Subsequently, \(2\) is not a unit and
      must have an \(\circ_n\)-irreducible factor \(p\) with \(\Sigma_n(p) \mid -(2n+1)\)
      (this includes the case \(2 = p\)). As \(p \in \{q_1,\ldots,q_r\}\) by assumption,
      \(\Sigma_n(p)\) must divide \(\Sigma_n(N) = -n\) and hence, must divide \(\gcd(n,
      2n+1) = 1\). This forces \(|\Sigma_n(p)| = 1\), which is true exactly for \(p =
      0\). But this is a contradiction.

      \medskip\noindent\textbf{Case 2: \(M \neq 0\).}\enspace Then \(|\Sigma_n(M)| \geq
      2\) (since \(M \neq 0\) and the monoid is sharp), so \(M\) has an
      \(\circ_n\)-irreducible factor \(p\) with \(\Sigma_n(p) \mid \Sigma_n(M)\). By
      assumption, \(p = q_i\) for some \(i\). As such, \(\Sigma_n(q_i)\) divides both
      \(\Sigma_n(N)\) and \(\Sigma_n(M)\), and hence their sum (Lemma~\ref{lem:sum}):
      \[ \Sigma_n(N) + \Sigma_n(M) \;=\; \Sigma_n(N) + \Sigma_n(1-N) \;=\; 1-n. \]
      But \(|\Sigma_n(q_i)| \geq 2\) and the smallest value in \(1+(n+1)\Z\) with
      absolute value \(> 1\) is \(-n\) (with \(\Sigma_n(1) = -n\)). Since \(|-n| = n >
      n-1 = |1-n|\), \(\Sigma_n(q_i)\) cannot divide \(1-n\). We arrive at a
      contradiction.
    \end{proof}

\subsection{Infinitely many \(\circ_n\)-primes and primes in arithmetic
    progressions}

    \begin{Th}[Main theorem] \label{thm:main}
      The \(\circ_n\)-primes are exactly the classical primes \(p\in \Z\) with
      \(|\Sigma_n(p)| \equiv \pm 1 \pmod{n+1}\). Moreover, the following are equivalent:
      \begin{enumerate}
        \item[(i)] There are infinitely many \(\circ_n\)-prime elements in \(\nEl(\Z)\).
        \item[(ii)] There are infinitely many classical primes \(p \equiv \pm 1
          \pmod{n+1}\).
      \end{enumerate}
      Part (ii) is, of course, Dirichlet's theorem for \(\pm1 + (n+1)\Z\) and
      subsequently, there are infinitely many \(\circ_n\)-primes in \(\nEl(\Z)\).
    \end{Th}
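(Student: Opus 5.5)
The first assertion needs to be read correctly. The \(\circ_n\)-primes are exactly the elements \(p\in\nEl(\Z)\) whose norm \(|\Sigma_n(p)|\) is a classical prime, and that prime is then necessarily \(\equiv\pm1\pmod{n+1}\). So the first part is just a restatement of Proposition~\ref{prop:prime_iff}, and I would cite it directly. The congruence comes for free: \(\Sigma_n(p)\in 1+(n+1)\Z\) by Lemma~\ref{lem:sigma_image}, hence \(|\Sigma_n(p)|\equiv \pm1\pmod{n+1}\). The real content is the equivalence (i)\(\iff\)(ii). I would prove it by showing that
\[
  p \;\longmapsto\; |\Sigma_n(p)|
\]
is a bijection from the set of \(\circ_n\)-primes of \(\nEl(\Z)\) onto the set of classical positive primes \(q\equiv\pm1\pmod{n+1}\).

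\emph{Well-definedness and surjectivity.} Well-definedness is the first part of the theorem. For surjectivity, let \(q\) be a prime with \(q\equiv\pm1\pmod{n+1}\) and set \(\varepsilon=\pm1\) so that \(\varepsilon q\equiv 1\pmod{n+1}\). By Lemma~\ref{lem:sigma_image} there is \(a\in\Z\) with \(\Sigma_n(a)=\varepsilon q\). This \(a\) is non-zero and a non-unit by Lemma~\ref{lem:units}, since \(|\Sigma_n(a)|=q>1\). By Proposition~\ref{prop:prime_iff} (or already Corollary~\ref{cor:prime_suff}, as \(\varepsilon q\) is a prime element of \(\Z\)), \(a\) is \(\circ_n\)-prime and maps to \(q\).

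\emph{Injectivity.} Suppose \(|\Sigma_n(p)|=|\Sigma_n(p')|=q\). If \(\Sigma_n(p)=\Sigma_n(p')\), then \(p=p'\) by injectivity of \(\Sigma_n\) (Lemma~\ref{lem:sigma_image}, since \(n+1\neq0\) in \(\Z\)). Otherwise \(\Sigma_n(p)=-\Sigma_n(p')\), and both \(q\) and \(-q\) lie in \(1+(n+1)\Z\). Then \(n+1\mid 2q\), and since \(\gcd(q,n+1)=1\) by Lemma~\ref{lem:coprime}, this forces \(n+1\mid 2\). That contradicts \(n\ge 2\). So the map is injective, hence a bijection, and infinitude of one set is equivalent to infinitude of the other.

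Finally, (ii) holds by Dirichlet's theorem: already the primes \(\equiv1\pmod{n+1}\) are infinite in number. The conclusion that there are infinitely many \(\circ_n\)-primes then follows from the equivalence. I expect no genuine obstacle here. The only delicate points are the sign bookkeeping, namely that exactly one of \(\pm q\) is a norm, which is where \(n\ge2\) enters, and phrasing the first sentence of the statement correctly in terms of norms rather than the elements \(p\) themselves.
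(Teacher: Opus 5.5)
Your proposal is correct and follows the paper's proof: the first assertion is read off from Proposition~\ref{prop:prime_iff}, and the equivalence of (i) and (ii) comes from the bijection \(\Sigma_n\colon \Z \to 1+(n+1)\Z\) of Lemma~\ref{lem:sigma_image}. Your version is more careful than the paper's one-line argument in two ways: you read the first sentence correctly in terms of norms (taken literally, it would conflate \(\circ_n\)-primes with classical primes), and you make explicit the sign check that only one of \(\pm q\) can be a norm when \(n\ge 2\).
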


    \begin{proof}
      That \(|\Sigma_n(p)| \equiv \pm 1 \pmod{n+1}\) must hold follows directly from
      Proposition~\ref{prop:prime_iff} and Lemma~\ref{lem:coprime}.

      The equivalence of (i) and (ii) follows from the bijection \(\Sigma_n: \Z \to
      1+(n+1)\Z\) (Lemma~\ref{lem:sigma_image}). The \(\circ_n\)-primes in \(\nEl(\Z)\)
      are thus exactly the classical primes \(q\equiv \pm 1\pmod{n+1}\).
    \end{proof}

\section{The \(n\)-ary elliptic class group of \(\nEl(\Z)\)} \label{sec:class_group}

  In light of the Theorem~\ref{thm:main}, we consider the arithmetic study of \(n\)-ary
  ring theory to be of some interest. It is not merely about proving Dirichlet's
  theorem in a purely algebraic manner (though that would, of course, be a highlight
  result), but that it seems to be the more natural setting for certain types of
  arithmetic questions.

  One of the most classical objects in algebraic number theory is of course the class
  group. It can be defined in numerous ways (see Picard group), but the arguably the
  most direct definition is as a quotient of fractional ideals by principal ones. The
  order of this quotient group is known as the class number and it measures how far a
  number ring is form being an unique factorisation domain (UFD). It is a pleasant and
  fairly obvious fact that the group of fractional ideals is isomorphic to the
  Grothendieck group of the monoid of ideals under multiplication.

  Using this definition, we introduce the class group of the standard \(n\)-ary ring
  \(\nEl(\Z)\) associated to \(\Z\) in this section and show that it likewise measures
  unique factorisation in \(\nEl(\Z)\), see Section~\ref{sec:cl_ufd}. This, in turn,
  relates back to Dirichlet's theorem for progressions of the form \(an + 1\).

\subsection{The monoid of \(n\)-ary elliptic ideals and its group completion}

    In Section~\ref{sec:ideals_nEl}, we constructed \(J(\fa)\) as the pre-image of the
    norm function \(\Sigma_n\) and showed that the non-empty \(n\)-ary elliptic ideals
    of \(\nEl(\Z)\) correspond exactly to ideal of \(\Z\) whose generator \(m\) is
    coprime to \(n+1\). We called this the norm-ideal correspondence. Note, here we
    used the basic fact that \(\Z\) is a PID and to clarify notation, we will write
    \[ J(m) \;:=\; J\bigl((m)\bigr) \;=\; \{a\in\Z: (n+1)a\equiv 1\pmod{m}\}. \]

    Using this correspondence, we introduce
    \[ \cN_n \;:=\; \{m\in\N_{>0}: \gcd(m,n+1)=1\}, \]
    as we have just argued that these are exactly the numbers (ideals) that correspond
    to non-empty \(n\)-ary ideals in \(\nEl(\Z)\). Endowing it with a monoid structure
    by (standard) multiplication, it is obvious that the assignment \(m\to J(m)\) is a
    monoid isomorphism. We identify our \(n\)-ary ideal monoid with \(\cN_n\) and form
    its Grothendieck group
    \[
      \cF_n \;:=\; \bigl\{r\in\Q_{>0} : v_p(r)
      =0 \,\text{ for all primes }\, p\mid n+1\bigr\}.
    \]
    As mentioned in our preliminary talk about the class group, \(\cF_n\) becomes the
    group of fractional ideals. We can describe \(\cF_n\) readily as the free abelian
    group on the set of (classical) primes not dividing \(n+1\) and write it as
    \[ \cF_n \;=\; \bigoplus_{p\nmid n+1}\Z\,[p]. \]
    In this notation, the ideal corresponding to \(m\in\cN_n\) becomes \(\sum_{p\mid
    m}v_p(m)\,[p]\).

    Under this correspondence, principal \(n\)-ary ideals of \(\nEl(\Z)\) are of the
    form
    \[ (a) \;=\; J\bigl((\Sigma_n(a))\bigr), \quad a\in \Z. \]
    We can see that as \(a\) ranges over \(\Z\), the values \(|\Sigma_n(a)|\) range
    over
    \[
      \cP_n \;:=\; \bigl\{|1-(n+1)a| : a\in\Z\bigr\}
      \;=\; \bigl\{m\in\N_{>0} : m\equiv\pm 1\pmod{n+1}\bigr\}.
    \]
    Based on this, we define the following subgroup of \(\cF_n\), to be the free group
    \(\overline{\cP}_n\) generated by the set the set
    \[ \{ [p] \mid \cP_n \ni p \nmid n+1 \,\text{ is a classical prime}\, \}. \]
    Note that \([p]\) is just notation to distinguish it from the associated number,
    and in a certain way, is something we regard as a divisor.

    This brings us to the following definition:

\subsection{The class group and its connection to the unit groups}

    \begin{De}
      Define the \emph{\(n\)-ary elliptic class group} of \(\nEl(\Z)\) as
      \[ \Cl_n(\Z) \;:=\; \cF_n \,\big/\, \overline{\cP}_n. \]
    \end{De}

    Under this equivalence, two \(n\)-ary ideals \(J(m)\) and \(J(k)\) represent the
    same class if and only if
    \[ mk^{-1} \;=\; \prod_i p_i^{e_i}, \]
    where each \(p_i\) \(\equiv\pm 1\pmod{n+1}\) is a classical prime.

    Unlike the classical class group, the \(n\)-ary class group is not trivial for
    \(\Z\) (that is to say, its not trivial for \(\nEl(\Z)\)). However, it is fairly
    easy to calculate in this case. We have:

    \begin{Pro} \label{prop:class_group_computation}
      There is a natural isomorphism of groups
      \[ \Cl_n(\Z) \cong (\Z/(n+1)\Z)^{\times} \big/ \{\pm 1\}. \]
      In particular, \(|\Cl_n(\Z)| = \phi(n+1)/2\).
    \end{Pro}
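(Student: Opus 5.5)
The plan is to write down an explicit homomorphism out of \(\cF_n\) and identify its kernel with \(\overline{\cP}_n\). By definition, \(\cF_n\) is the group of positive rationals \(r\) with \(v_p(r)=0\) for every prime \(p\mid n+1\). Each such \(r\) is a quotient \(m/k\) with \(m,k\in\cN_n\), so both numerator and denominator are units modulo \(n+1\). This suggests the map
\[
  \psi:\cF_n \longrightarrow (\Z/(n+1)\Z)^{\times}/\{\pm 1\},\qquad [p]\longmapsto \pm\bigl(p \bmod (n+1)\bigr),
\]
extended additively. Because \(\cF_n=\bigoplus_{p\nmid n+1}\Z\,[p]\) is free, \(\psi\) is a well-defined homomorphism without further checking. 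Equivalently, \(\psi(m/k)=\pm m k^{-1}\bmod (n+1)\).

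First I will show that \(\psi\) is surjective. A class \(\pm u\) with \(u\in(\Z/(n+1)\Z)^\times\) can be represented by a positive integer \(m\) with \(m\equiv u\pmod{n+1}\). This \(m\) is automatically coprime to \(n+1\), so \(m\in\cN_n\) and \(\psi(m)=\pm u\). No appeal to Dirichlet is needed here, since \(m\) need not be prime.

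Next I will show that \(\overline{\cP}_n\subseteq\ker\psi\). Each generator \([p]\) of \(\overline{\cP}_n\) is a prime with \(p\equiv\pm1\pmod{n+1}\), so \(\psi([p])\) is trivial.

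The remaining step, the reverse inclusion \(\ker\psi\subseteq\overline{\cP}_n\), is the main obstacle. Take \(r=\sum_p e_p[p]\in\ker\psi\). Then \(\prod_p p^{e_p}\equiv\pm1\pmod{n+1}\). However, the individual primes occurring in \(r\) need not be \(\pm1\) modulo \(n+1\). For example, with \(n=4\) we have \([2]+[3]\) in the kernel, while \(6=2\cdot 3\) is \(1 \bmod 5\) and neither \(2\) nor \(3\) is. So if \(\overline{\cP}_n\) is read literally as the group generated only by the primes \(\equiv\pm1\), the kernel is strictly larger: \([2]+[3]\) is not in the span, because \(\cF_n\) is free on the primes.

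To get the claimed isomorphism, \(\overline{\cP}_n\) has to be read as the subgroup generated by all principal ideals, that is, by the classes \(\sum_p v_p(m)[p]\) for \(m\in\cP_n\). This is consistent with the stated description of equivalence, where \(mk^{-1}\) is required to be a quotient of elements of \(\cP_n\). I would state this reading explicitly at the start of the proof.

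Under this reading the kernel argument goes as follows. Write \(r=m/k\) with \(m,k\in\cN_n\) and \(m\equiv\pm k\pmod{n+1}\). Choose \(k'\in\cN_n\) with \(kk'\equiv1\pmod{n+1}\). Then both \(kk'\) and \(mk'\) lie in \(\cP_n\), and \(r=(mk')/(kk')\) lies in the subgroup generated by \(\cP_n\). Hence \(\ker\psi=\overline{\cP}_n\).

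Finally, the first isomorphism theorem gives \(\Cl_n(\Z)\cong(\Z/(n+1)\Z)^\times/\{\pm1\}\). Since \(n\ge2\), we have \(-1\ne1\) modulo \(n+1\), so \(\{\pm1\}\) has order \(2\) and the order is \(\phi(n+1)/2\). Naturality follows because \(\psi\) is defined canonically from reduction modulo \(n+1\).
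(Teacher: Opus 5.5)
Your proof is correct, and your diagnosis is right: the proposition only holds once \(\overline{\cP}_n\) is taken to be the subgroup of \(\cF_n\) generated by the classes of principal ideals, i.e.\ by \(\cP_n\). The paper uses the same map \(\psi\) but differs from you at both substantive steps.

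For surjectivity, the paper invokes Dirichlet to find a \emph{prime} in each residue class. Your observation that any positive integer \(m\equiv u\pmod{n+1}\) already lies in \(\cN_n\subseteq\cF_n\) makes that appeal unnecessary, so the whole computation is elementary.

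For the kernel, the paper argues that a generator \([p]\) lies in \(\ker\psi\) iff \(p\equiv\pm1\pmod{n+1}\), and concludes \(\ker\psi=\overline{\cP}_n\). This is exactly the invalid inference you flag: the kernel of a homomorphism out of a free abelian group need not be spanned by the basis elements it contains. With the literal definition, \(\cF_n/\overline{\cP}_n\) is free abelian on the primes \(p\nmid n+1\) with \(p\not\equiv\pm1\pmod{n+1}\). Such a prime exists whenever \(\phi(n+1)>2\), since a positive representative of a unit \(u\not\equiv\pm1\) must have a prime factor \(\not\equiv\pm1\). So the literal quotient is infinite, and the proposition as written is false. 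Your choice of \(k'\) with \(kk'\equiv1\pmod{n+1}\), giving \(r=(mk')/(kk')\) with both terms in \(\cP_n\), is precisely the step the corrected kernel computation needs.

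One correction to your framing: the paper's own description of equivalent ideals (\(mk^{-1}=\prod p_i^{e_i}\) with every \(p_i\equiv\pm1\) prime) matches the literal, flawed definition, not yours. Present your reading of \(\overline{\cP}_n\) explicitly as a correction of the definition, not as something consistent with the text.
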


    \begin{proof}
      We define the group homomorphism \(\psi: \cF_n \to (\Z/(n+1)\Z)^{\times}/\{\pm
      1\}\) on generators by
      \[ \psi\bigl([p]\bigr) \;:=\; [p \bmod{n+1}]. \]
      Recall that generators are exactly the classical primes \(p\nmid n+1\) and are thus
      units mod \(n+1\), which gives us well-definidness.

      Surjectivity is just Dirichlet's theorem: For any
      \([u]\in(\Z/(n+1)\Z)^{\times}/\{\pm 1\}\), we can find a prime \(p\equiv
      u\pmod{n+1}\) with \(p\nmid n+1\), meaning a prime \(p\) with \(\psi([p]) = [u]\).

      To understand the kernel, we note that a generator \([p]\) lies in \(\ker\psi\) if
      and only if \(p\equiv\pm 1 \pmod{n+1}\). But this gives us exactly that the kernel
      is \(\overline{\cP}_n\) and thus the theorem.
    \end{proof}

    In particular, we have:

    \begin{Cor}
      \(\Cl_n(\Z)\) is trivial if and only if \((\Z/(n+1)\Z)^{\times} = \{\pm 1\}\),
      meaning precisely for \(n\in\{2,3,5\}\).
    \end{Cor}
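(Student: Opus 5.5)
The plan is to read this off directly from Proposition~\ref{prop:class_group_computation}, which gives \(\Cl_n(\Z) \cong (\Z/(n+1)\Z)^{\times}/\{\pm 1\}\). Since \(n \geq 2\), we have \(n+1 \geq 3\), so \(-1 \not\equiv 1 \pmod{n+1}\). Hence \(\{\pm 1\}\) is a subgroup of order exactly \(2\) in \((\Z/(n+1)\Z)^{\times}\). The quotient is therefore trivial if and only if the whole unit group equals \(\{\pm 1\}\), which is the first equivalence. Equivalently, it holds if and only if \(\phi(n+1) = 2\), matching the count \(|\Cl_n(\Z)| = \phi(n+1)/2\) from that proposition.

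It remains to solve \(\phi(m) = 2\) for \(m = n+1 \geq 3\). Write \(m = \prod p^{k_p}\), so that \(\phi(m) = \prod p^{k_p - 1}(p-1)\). Every factor \(p^{k_p-1}(p-1)\) must divide \(2\). This has two consequences. First, \(p - 1 \mid 2\), so \(p \in \{2,3\}\). Second, for \(p = 3\) the factor is \(3^{k_3-1}\cdot 2\), which forces \(k_3 \leq 1\); for \(p = 2\) the factor is \(2^{k_2-1}\), which forces \(k_2 \leq 2\). Thus \(m \in \{2,3,4,6,12\}\) up to the trivial case \(m=1\).

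A direct check of these finitely many candidates finishes the argument. We have \(\phi(2) = 1\) and \(\phi(12) = 4\), which are excluded, while \(\phi(3) = \phi(4) = \phi(6) = 2\). So \(n+1 \in \{3,4,6\}\), that is, \(n \in \{2,3,5\}\).

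There is no real obstacle here. The only point needing care is the observation \(-1 \neq 1\) in \(\Z/(n+1)\Z\) for \(n \geq 2\). Without it, the quotient could be trivial while the unit group is \(\{1\}\) rather than \(\{\pm1\}\), as happens for \(n+1 = 2\), which the standing convention \(n \geq 2\) rules out. As a consequence, this corollary inherits the dependence on Dirichlet's theorem only through the surjectivity in Proposition~\ref{prop:class_group_computation}.
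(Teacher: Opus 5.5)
Your argument is the paper's own: the corollary is read off from Proposition~\ref{prop:class_group_computation}. Your solution of \(\phi(n+1)=2\), giving \(n+1\in\{3,4,6\}\), correctly supplies the identification \(n\in\{2,3,5\}\) that the paper leaves unstated.

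One caution, since everything rests on that proposition. Take \(\overline{\cP}_n\) as literally defined, i.e.\ generated only by the \emph{primes} \(p\equiv\pm1\pmod{n+1}\). Then the quotient \(\cF_n/\overline{\cP}_n\) is free abelian on the primes \(p\nmid n+1\) with \(p\not\equiv\pm1\pmod{n+1}\). So it is infinite once \(\phi(n+1)>2\), and the claimed isomorphism fails. For example, for \(n=4\) the principal ideal \((-1)=J(6)\) has class \([2]+[3]\neq 0\) in \(\cF_4/\overline{\cP}_4\), although \(\psi([2]+[3])=[6]=[1]\). The proposition, and hence your derivation, is sound if \(\overline{\cP}_n\) is taken to be generated by all of \(\cP_n\), which is what the principal ideals call for.

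Under either reading the corollary itself is true, and it has a direct proof. Literally, \(\Cl_n(\Z)\) is trivial if and only if every prime \(p\nmid n+1\) satisfies \(p\equiv\pm1\pmod{n+1}\). This is equivalent to \((\Z/(n+1)\Z)^{\times}=\{\pm1\}\), because every unit class mod \(n+1\) has a positive representative coprime to \(n+1\), and is therefore a product of classes of such primes. The same observation makes \(\psi\) surjective without Dirichlet. So your closing remark is mistaken: nothing in this corollary depends on Dirichlet's theorem.
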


    \begin{Cor}
      The \(n\)-ary elliptic class group \(\Cl_n(\Z)\) is finite of order
      \(\phi(n+1)/2\).
    \end{Cor}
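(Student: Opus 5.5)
This corollary follows directly from Proposition~\ref{prop:class_group_computation}: the isomorphism \(\Cl_n(\Z)\cong(\Z/(n+1)\Z)^{\times}/\{\pm 1\}\) presents \(\Cl_n(\Z)\) as a quotient of a finite group. It is therefore finite, and its order is the order of that quotient. The plan is to identify the order of \((\Z/(n+1)\Z)^{\times}\) and the order of the subgroup \(\{\pm 1\}\), then divide.

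First, \((\Z/(n+1)\Z)^{\times}\) has order \(\phi(n+1)\) by the definition of Euler's totient function. Next, \(\{\pm 1\}\) is a subgroup of order exactly \(2\), because \(1\not\equiv -1 \pmod{n+1}\). Indeed, the congruence \(1\equiv -1\) would force \(n+1\mid 2\), which is impossible since \(n\geq 2\) gives \(n+1\geq 3\).

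Lagrange's theorem then gives \(|\Cl_n(\Z)| = \phi(n+1)/2\). As a side check, this also shows that \(\phi(n+1)\) is even; that is already known for \(n+1\geq 3\), and here it follows because \(-1\) has order \(2\) in the unit group.

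The only point needing care is that the isomorphism of Proposition~\ref{prop:class_group_computation} is genuinely an isomorphism, in particular that \(\psi\) is surjective. That surjectivity rests on Dirichlet's theorem, as stated in the proof of the proposition. If one wants finiteness alone without Dirichlet, note that \(\psi\) is still well defined with kernel \(\overline{\cP}_n\), so it induces an injection \(\Cl_n(\Z)\hookrightarrow(\Z/(n+1)\Z)^{\times}/\{\pm1\}\). That injection already gives finiteness and the bound \(|\Cl_n(\Z)|\leq\phi(n+1)/2\), and Dirichlet is needed only to obtain equality.
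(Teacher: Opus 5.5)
Your reduction is the same as the paper's: the corollary is the ``in particular'' clause of Proposition~\ref{prop:class_group_computation}, and your count \(|\{\pm1\}|=2\) (because \(n+1\geq 3\)) is correct. However, you have put the care in the wrong place. Surjectivity of \(\psi\) does not need Dirichlet. For \(m\in\cN_n\), write \([m]:=\sum_p v_p(m)[p]\in\cF_n\). Every unit \(u\) modulo \(n+1\) has a positive representative \(m\in\cN_n\), and \(\psi([m])=[u]\) because \(\psi\) is a homomorphism. Dirichlet is needed only if you insist on hitting \([u]\) with a single generator \([p]\), and nothing requires that. So your remark that Dirichlet is what gives equality is mistaken.

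The genuine gap is the step you take on trust: ``\(\psi\) is still well defined with kernel \(\overline{\cP}_n\)''. The paper literally defines \(\overline{\cP}_n\) as spanned only by the \([p]\) with \(p\) a prime \(\equiv\pm1\pmod{n+1}\). With that definition the kernel claim is false whenever \(\phi(n+1)>2\). For \(n=4\), \(\psi([2]+[3])\) is the class of \(6\equiv 1\pmod 5\), yet \([2]+[3]\notin\overline{\cP}_4\). The paper's inference ``a generator lies in the kernel iff \(p\equiv\pm1\), hence the kernel is \(\overline{\cP}_n\)'' is a non sequitur: a kernel inside a free abelian group need not be spanned by the basis vectors it contains. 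In fact \(\cF_n/\overline{\cP}_n\) is then free abelian on the primes \(p\nmid n+1\) with \(p\not\equiv\pm1\pmod{n+1}\), so it is infinite (for \(n=4\) the class of \([2]\) has infinite order). That quotient even assigns the nonzero class \([2]+[3]\) to the principal \(n\)-ary ideal generated by \(-1\) in \(\nEl(\Z)\), which equals \(J(6)\) since \(\Sigma_4(-1)=6\). So neither your injection nor finiteness follows as written.

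What you must do is take as the principal subgroup the one generated by all \([m]\) with \(m\in\cP_n\); these are the norms of the principal ideals \((a)=J(|\Sigma_n(a)|)\). You then have to prove that this subgroup is exactly \(\ker\psi\). The inclusion into \(\ker\psi\) is clear. Conversely, every element of \(\cF_n\) is \([a]-[b]\) with \(a,b\in\cN_n\). If it lies in \(\ker\psi\), then \(a\equiv\pm b\pmod{n+1}\). Choose a positive \(c\) with \(bc\equiv1\pmod{n+1}\); then \([a]-[b]=[ac]-[bc]\) with \(ac,bc\in\cP_n\). This kernel computation, together with the elementary surjectivity above, gives \(\Cl_n(\Z)\cong(\Z/(n+1)\Z)^{\times}/\{\pm1\}\) and \(|\Cl_n(\Z)|=\phi(n+1)/2\), with no appeal to Dirichlet at all.
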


\subsection{The class group governs unique factorisation in prime ideals}
    \label{sec:cl_ufd}

    We have the following theorem which sums up our talk on the class group:

    \begin{Th}
      An \(n\)-ary ideal \(J(m)\) of \(\nEl(\Z)\) is principal if and only if its class
      \([J(m)]\in\Cl_n(\Z)\) is trivial. Moreover:
      \begin{enumerate}
        \item[(i)] Every \(n\)-ary ideal \(I\subseteq \nEl(\Z)\) may be uniquely written as
          \[ I \;=\; J(p_1)^{e_1}\cdots J(p_k)^{e_k} \]
          with \(p_i\) classical primes \(p_i\nmid n+1\).
        \item[(ii)] For a \(\circ_n\)-prime \(q\), we have
          \[ (q) \;=\; J(|\Sigma_n(q)|). \]
          In particular, for every non-empty \(n\)-ary prime \(q\) ideal is in the image of
          \(J(-)\).
        \item[(iii)] An \(n\)-ary ideal \(J(p)\) is principal if and only if \(p\equiv\pm 1
          \pmod{n+1}\). Otherwise it represents a non-trivial element of \(\Cl_n(\Z)\).
      \end{enumerate}
      In particular, factorisation into \(n\)-ary prime elements exists exactly when
      \(\Cl_n(\Z)\) is trivial, meaning \(n\in\{2,3,5\}\).
    \end{Th}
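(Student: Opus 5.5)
The plan is to reduce everything to the norm-ideal correspondence and the computation of $\Cl_n(\Z)$ in Proposition~\ref{prop:class_group_computation}.

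\textbf{Principal ideals.} First I would make precise the identity $(a) = J\bigl((\Sigma_n(a))\bigr)$. By Lemma~\ref{lem:norm_div}, $x\in(a)$ if and only if $\Sigma_n(a)\mid\Sigma_n(x)$ in $\Z$, which is exactly $x\in J(|\Sigma_n(a)|)$. So the principal ideals are precisely the $J(m)$ with $m\in\cP_n$, i.e.\ $m\equiv\pm1\pmod{n+1}$. Here I use that $J$ is injective on $\cN_n$: for $m\in\cN_n$, $J(m)$ determines $m$ as the gcd of its norms, by the CRT argument in the next paragraph.

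On the other hand, the homomorphism $\psi$ from the proof of Proposition~\ref{prop:class_group_computation} sends the class of $J(m)$ to $[m \bmod (n+1)]\in(\Z/(n+1)\Z)^\times/\{\pm1\}$. Since $\ker\psi=\overline{\cP}_n$, this class is trivial if and only if $m\equiv\pm1\pmod{n+1}$. Comparing the two descriptions gives the first claim. Statement (iii) is the special case $m=p$ prime.

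\textbf{Every non-empty ideal is some $J(m)$.} This is the step I expect to be the main obstacle, since the text so far only asserts the correspondence. Let $I\neq\emptyset$ and put $S=\Sigma_n(I)\subseteq 1+(n+1)\Z$. By the computation in Proposition~\ref{prop:J_ideal}, (I1) gives $-(\alpha_1+\cdots+\alpha_n)\in S$ for all $\alpha_i\in S$, and (I2) together with Lemma~\ref{lem:mult} gives $\alpha s\in S$ for all $\alpha\in S$ and $s\in 1+(n+1)\Z$.

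Applying (EG2) in $\nEl(\Z)$ (and $\Sigma_n$ is a bijection onto $1+(n+1)\Z$), $S$ is closed under the affine operation $(\alpha,\beta,\gamma)\mapsto\alpha-\beta+\gamma$ (taking all but a few entries equal). Hence $S=\alpha_0+H$ for a subgroup $H=d\Z$. Now $H$ contains $\alpha(s-1)$, so it contains $(n+1)\alpha$ for every $\alpha\in S$, and it contains all differences $\alpha-\beta$.

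Let $g=\gcd(S)$, which is coprime to $n+1$ by Lemma~\ref{lem:coprime}. I expect to get $d=(n+1)g$, and then the CRT gives
\[
  S = g\Z\cap(1+(n+1)\Z).
\]
That means $I=J(g)$. I would also check that the ideal product of $J(m)$ and $J(k)$ (the ideal generated by $\circ_n$-products) is $J(mk)$; this again reduces to the same gcd computation on norms.

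\textbf{Unique factorisation (i).} With $I=J(m)$, $m\in\cN_n$, and $m\mapsto J(m)$ a monoid isomorphism $\cN_n\to\{\text{non-empty ideals}\}$, the factorisation $m=\prod p_i^{e_i}$ in $\Z$ transports to
\[
  I=J(p_1)^{e_1}\cdots J(p_k)^{e_k},
\]
and uniqueness follows from unique factorisation in $\Z$. The empty ideal has to be excluded or treated by convention.

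\textbf{Prime elements (ii) and the final claim.} For (ii), combine Proposition~\ref{prop:prime_iff} (a $\circ_n$-prime $q$ has $|\Sigma_n(q)|$ prime) with $(q)=J(|\Sigma_n(q)|)$ from the first step.

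For the final claim, factorisation into prime elements holds if and only if every $J(p)$ with $p\nmid n+1$ is principal. By (iii) this holds if and only if every prime $p\nmid n+1$ is $\equiv\pm1\pmod{n+1}$. By Dirichlet (used as in Proposition~\ref{pro:irred=prime}), this is equivalent to $(\Z/(n+1)\Z)^\times=\{\pm1\}$, i.e.\ $\phi(n+1)\le2$. That forces $n+1\in\{3,4,6\}$, so $n\in\{2,3,5\}$.
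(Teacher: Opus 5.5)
Your proposal is correct, and at the decisive step it takes a different and sounder route than the paper.

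For the first claim, the paper writes $[J(m)]=\sum e_i[J(p_i)]$ and argues that this class is trivial if and only if every summand is, hence if and only if $m\equiv\pm1\pmod{n+1}$. That step is invalid. For $n=4$ one has $\Sigma_4(-1)=6$, so $J(6)=(-1)$ is principal, yet neither $J(2)$ nor $J(3)$ is. Your argument avoids this. You identify the principal ideals as exactly the $J(m)$ with $m\in\cP_n$, using Lemma~\ref{lem:norm_div} and injectivity of $J$. You then read off triviality directly from $\psi([J(m)])=[m \bmod (n+1)]$.

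You should make one caveat explicit. The equality $\ker\psi=\overline{\cP}_n$ that you quote from Proposition~\ref{prop:class_group_computation} holds only if $\overline{\cP}_n$ is the subgroup generated by all $[m]$ with $m\in\cP_n$, i.e.\ by the classes of principal ideals. Under the literal definition, where $\overline{\cP}_n$ is generated only by the primes in $\cP_n$, the element $[2]+[3]$ lies in $\ker\psi$ but not in $\overline{\cP}_4$. With that definition the first claim itself would be false.

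For (i), the paper only invokes the ``norm-ideal correspondence'' and the multiplicativity of $J$. But Section~\ref{sec:ideals_nEl} proves only that each $J(\fa)$ is an ideal, not that every non-empty ideal has this form. Your coset argument fills exactly this hole. The step you leave as ``I expect'' closes at once:
\begin{itemize}
\item $d$ divides $(n+1)\alpha$ for every $\alpha\in\Sigma_n(I)$, hence divides $(n+1)g$;
\item the coprime numbers $n+1$ and $g$ both divide $d$, so $(n+1)g$ divides $d$.
\end{itemize}
Your gcd-of-norms check of injectivity and of $J(m)J(k)=J(mk)$ is likewise correct and absent from the paper. So is your exclusion of the empty ideal.

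For the final claim, the paper uses Proposition~\ref{pro:irred=prime}, while you go through (iii). You still owe a short argument for ``prime factorisation exists iff every $J(p)$ is principal.''
\begin{itemize}
\item If some $p\nmid n+1$ has $p\not\equiv\pm1$, an element of norm $p^{\phi(n+1)}$ has no factorisation into $\circ_n$-primes.
\item Conversely, if all such $p$ are $\equiv\pm1$, lift each prime factor of $\Sigma_n(a)$ with the sign that makes it $\equiv1$. The signed product equals $\Sigma_n(a)$, because both are $\equiv1$ and $n+1\ge3$.
\end{itemize}
Finally, you do not need Dirichlet to see that all primes $p\nmid n+1$ being $\equiv\pm1$ forces $(\Z/(n+1)\Z)^\times=\{\pm1\}$. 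Every unit class has a positive representative coprime to $n+1$, and its class is the product of the classes of its prime factors.
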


    \begin{proof}
      (i): We know from Proposition~\ref{prop:J_ideal_obs} and the norm-ideal
      correspondence that non-empty \(n\)-ary ideals of \(\nEl(\Z)\) correspond exactly
      to the numbers coprime with \(n+1\). Subsequently, we can write \(I = J(m)\) for
      some \(m\) with \(\gcd(m, n+1) = 1\). Factorise \(m = p_1^{e_1} \cdots p_k^{e_k}\)
      and use the fact that \(J\) respects the product (as mentioned in the introduction
      of this section as well) to obtain the desired result.

      (ii): This is just an amalgamion of previous results.
      Proposition~\ref{prop:prime_iff}, says that a non-zero non-unit \(q\in\Z\) is
      \(\circ_n\)-prime if and only if \(|\Sigma_n(q)|\) is a classical prime.
      Lemma~\ref{lem:coprime} states \(|\Sigma_n(q)|\equiv\pm 1\pmod{n+1}\).
      Subsequently, \((q) = J(|\Sigma_n(q)|)\) is a prime \(n\)-ary ideal by
      Proposition~\ref{prop:J_prime}.

      (iii): By the norm-ideal correspondence we know that \(J(p)\) is principal if and
      only if there exists an element \(a\in\Z\) with \(|\Sigma_n(a)| = p\). But this
      just means exactly \(p\equiv\pm 1\pmod{n+1}\). \\

      We return to the initial statement: By part (i), we can write \(J(m) = \prod
      J(p_i)^{e_i}\) and subsequently, its class as
      \[ [J(m)] \;=\; \sum e_i[J(p_i)] \in \Cl_n(\Z). \]
      We know Proposition \ref{prop:class_group_computation} and the definition of
      \(\overline{cP}_n\) that \([J(p_i)]\) is trivial in the class group if and only if
      \(p_i\) is \(\pm1 \pmod{n+1}\). Since different primes can not cancel each other
      out in the class group (as its a subgroup of a free group, hence free), their sum
      is trivial if and only if each summand is trivial and hence, if and only if \(m =
      \pm1 \pmod{n+1}\), meaning exactly when \(m\in \cP_n\) and thus the claim.

      The initial statement is now immediate: Every non-empty \(n\)-ary ideal factors
      into prime ideals of the form \(J(p)\). Each \(J(p)\) is principal if and only if
      \(p\equiv\pm 1\). Moreover, \(\mathrm{Cl}_n(\Z)\) is trivial if and only if all
      primes \(p\nmid n+1\) satisfy \(p\equiv\pm 1\), which itself holds if and only if
      \((\Z/(n+1)\Z)^{\times} = \{\pm 1\}\), that is to say exactly when
      \(n\in\{2,3,5\}\).

      The last assertion follows from Proposition~\ref{pro:irred=prime}.
    \end{proof}

    \begin{Rem}
      Part (i) is, of course, a type of \(n\)-ary Dedekind's factorisation theorem for
      \(\nEl(\Z)\).
    \end{Rem}

\section{The case when \(n+1\) is invertible in \(R\)}

  We will not go back to a more general discussion, as throughout, a special case
  stands out, namely when \(n+1 \in R^\times\) is invertible in our ring \(R\). In that
  case, our theory becomes significantly simpler and we are able to define kernels and
  the isomorphism theorem and certain maps become isomorphisms, simplifying our
  discussions. The general idea is that, other than intrinsic interest, might also be
  used for when \(n+1\) is a non-zero divisor in \(R\). In that case, we can consider
  \(R_{n+1}\) and hope that it might give us useful information. We demonstrate this
  general idea quickly in Subsection~\ref{sec:localisation}, even if do not highlight
  any special results we obtain in this manner.\\

  \noindent {\bfseries New convention:} From now on, unless explicitly stated
  otherwise, \(n+1\) will be assumed to be invertible throughout in every base ring.
  Moreover, \(z = z_R: \frac{1}{n+1}\) will be fixed. Note that morphisms respect it by
  default, meaning \(f(z_A) = z_B\), and ideals contain it (that is to say, ideals are
  assumed to be non-empty).

\subsection{Ideals and Kernels}

    In ordinary ring theory (or group theory), the kernel is one of the most
    fundamental constructions, as it controls the fibres of a homomorphism and allows
    us to consider the first isomorphism theorem, among many other things. It is
    defined as the preimage of the additive unit, which is also the absorbing element
    of the multiplication. However, our "addition", \(\ast\), does not have a unit and
    neither does \(\circ\) have an absorbing element, in general.

    If, however, our \(n\)-ary elliptic ring \(A\) is of the form \(A\simeq \nEl(R)\)
    for ring \(R\) with \(n+1\in R^{\times}\), we can consider \(f^{-1}(\{z\})\), where
    \(z = 1/(n+1)\).

    As mentioned in Remark~\ref{rem:n+1_invertible}, the case when \(n+1\) is
    invertible is essentially its own category and we will do a small exploration of
    this topic here.

\subsubsection*{Kernel definition and ideal property}

      \begin{De}
        For \(f: A \to B\) a morphism of \(n\)-ary elliptic rings, define the
        \emph{\(n\)-ary kernel} as
        \[ \ker_n(f) \;=\; f^{-1}(\{z_B\}). \]
      \end{De}

      \begin{Le} \label{lem:ker_ideal}
        The Kernel \(\ker_n(f)\) of an \(n\)-ary ring morphism \(f: A \to B\) is an
        \(n\)-ary ideal of \(A\).
      \end{Le}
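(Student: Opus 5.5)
We check the two ideal axioms (I1) and (I2) directly for \(\ker_n(f) = f^{-1}(\{z_B\})\), using only that \(f\) preserves \(\ast_n\) and \(\circ_n\), together with Lemma~\ref{le:zero}. Under the current convention, \(z_B = 1/(n+1)\) exists in \(B\). Morphisms respect absorbing elements, so \(f(z_A) = z_B\). Hence \(z_A \in \ker_n(f)\), and the kernel is non-empty, as the new convention requires of ideals.

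For (I1), take \(a_1,\ldots,a_n \in \ker_n(f)\). Since \(f\) is a morphism of \(n\)-ary groups,
\[
  f(a_1 \ast_n \cdots \ast_n a_n) \;=\; f(a_1) \ast_n \cdots \ast_n f(a_n) \;=\; z_B \ast_n \cdots \ast_n z_B \;=\; z_B,
\]
where the last equality is Lemma~\ref{le:zero}(ii). So \(a_1 \ast_n \cdots \ast_n a_n \in \ker_n(f)\).

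For (I2), take \(a \in A\) and \(r \in \ker_n(f)\). Since \(f\) is a monoid homomorphism,
\[
  f(a \circ_n r) \;=\; f(a) \circ_n f(r) \;=\; f(a) \circ_n z_B \;=\; z_B,
\]
by Lemma~\ref{le:zero}(i) (or by the defining absorbing property of \(z_B\)). Hence \(a \circ_n r \in \ker_n(f)\).

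The only delicate point is making sure that Lemma~\ref{le:zero} is available in \(B\). That lemma is stated for \(\nEl(R)\) with \(n+1\) invertible, so we must either assume \(B \cong \nEl(R)\), as the preceding discussion does, or use the intrinsic properties \(z_B \ast_n \cdots \ast_n z_B = z_B\) and \(b \circ_n z_B = z_B\). These hold in \(\nEl(R)\) and are preserved under isomorphism. Apart from this, the argument is a two-line verification.
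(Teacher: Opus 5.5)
Your proof is correct and follows the paper's own argument. It gets (I1) from Lemma~\ref{le:zero}(ii) and (I2) from Lemma~\ref{le:zero}(i), using that $f$ preserves $\ast_n$ and $\circ_n$. Your extra remarks, that non-emptiness follows from $f(z_A)=z_B$ and that Lemma~\ref{le:zero} must hold in $B$, are sensible points the paper leaves implicit.
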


      \begin{proof}
        This is de facto Lemma~\ref{le:zero}. The closure under \(\ast_n\) (I1) follows
        from Lemma~\ref{le:zero}(ii) as \(f\) preserves \(\ast_n\) and we thus have
        \[
          f(x_1 \ast_n \cdots \ast_n x_n)
          \;=\; f(x_1) \ast_n \cdots \ast_n f(x_n)
          \;=\; \underbrace{z \ast_n \cdots \ast_n z}_{ \,\text{n}\, } \;=\; z,
        \]
        showing that \(x_1 \ast_n \cdots \ast_n x_n \in \ker_n(f)\). The absorption under
        \(\circ_n\) (I2) follows from Lemma~\ref{le:zero}(i), which implies
        \[ f(x \circ_n r) \;=\; f(x) \circ_n f(r) \;=\; z \circ_n f(r) \;=\; z, \]
        as \(f\) preserves \(\circ_n\).
      \end{proof}

\subsubsection*{The kernel captures the congruence}

      \begin{Pro} \label{prop:ker_congruence}
        Let \(f: A \to B\) be a morphism of \(n\)-ary elliptic rings. For \(x, y \in A\),
        \[ x \sim_{\ker_n(f)} y \;\iff\; f(x) \;=\; f(y). \]
      \end{Pro}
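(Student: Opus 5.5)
We prove the two implications separately. Throughout, \(z_A\) and \(z_B\) are the absorbing elements, \(f(z_A)=z_B\) by the convention, and \(\ast\) is written without subscript. Write \(K=\ker_n(f)\). By Lemma~\ref{lem:ker_ideal}, \(K\) is an \(n\)-ary ideal, and it is non-empty because \(z_A\in K\). So \(\sim_K\) is given by witnesses as in \eqref{eq:congruence}.

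\((\Rightarrow)\) Suppose \(x\ast a_1\ast\cdots\ast a_{n-1} = y\ast b_1\ast\cdots\ast b_{n-1}\) with all \(a_i,b_i\in K\). Applying \(f\) and using that \(f\) preserves \(\ast\), we get
\[ f(x)\ast z_B\ast\cdots\ast z_B \;=\; f(y)\ast z_B\ast\cdots\ast z_B . \]
Now apply \(-\ast z_B\ast\cdots\ast z_B\) (with \(n-1\) copies of \(z_B\)) to both sides. By (EG2) this returns \(f(x)=f(y)\).

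\((\Leftarrow)\) Suppose \(f(x)=f(y)\). We need witnesses in \(K\), and the natural choice is to fix \(a_1=\cdots=a_{n-2}=b_1=\cdots=b_{n-1}=z_A\) and solve for the remaining one. By Remark~\ref{rem:simpler_eq} we want
\[ x \;=\; (y\ast z_A\ast\cdots\ast z_A)\ast z_A\ast\cdots\ast z_A\ast a_{n-1}. \]
So define \(w:=y\ast z_A\ast\cdots\ast z_A\) (with \(n-1\) copies of \(z_A\)) and
\[ a_{n-1}:=(x\ast w\ast z_A\ast\cdots\ast z_A)\ast \cdots, \]
meaning \(a_{n-1}\) is obtained from \(x\) by moving \(w\) and the \(n-2\) copies of \(z_A\) to the other side via (EG2). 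Then (EG1) and (EG2) give the displayed identity by construction. It remains to check that \(a_{n-1}\in K\). Since \(f\) preserves \(\ast\) and \(f(z_A)=z_B\), the element \(f(a_{n-1})\) is the same expression with \(x,y,z_A\) replaced by \(f(x)=f(y),f(y),z_B\). This reduces the claim to the identity
\[ \bigl(u\ast (u\ast z\ast\cdots\ast z)\ast z\ast\cdots\ast z\bigr)\ast z\ast\cdots\ast z \;=\; z \]
in \(B\), with \(u=f(y)\).

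The main obstacle is this last identity, since a general \(B\) has only the axioms and the absorbing property of \(z_B\). We expect to handle it cleanly using the kernel setting of this section, where \(B\simeq\nEl(R')\) with \(n+1\) invertible and \(z_B=1/(n+1)\). There \(\ast\) is \(1-\sum\), and a direct computation finishes the proof. For example, \(u\ast z\ast\cdots\ast z\ast v = 1-u-v-(n-2)z\), and iterating shows all \(u\)-terms cancel, leaving \(z\) via Lemma~\ref{le:zero}(ii). If a more intrinsic argument is wanted, one can instead use (EG3) to move \(u\) out of the double bracket, so that the two occurrences of \(u\) pair up and cancel by (EG2), leaving \(z\ast\cdots\ast z=z\).
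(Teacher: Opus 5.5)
Your proof is correct. The $(\Rightarrow)$ direction is exactly the paper's, but your $(\Leftarrow)$ direction takes a different route.

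\textbf{The paper's route.} It works inside the concrete model. It chooses $a_1 = z_A + (y-x)$ and sets every other witness equal to $z_A$, so the witness equation reduces to $\sum a_i - \sum b_j = y-x$. It then checks $f(a_1) = f(z_A)+f(y)-f(x) = z_B$. This needs subtraction in $A$, so $A\cong\nEl(R)$. It also needs the affine additivity $f(a+b-c)=f(a)+f(b)-f(c)$ of $\ast$-morphisms between standard $n$-ary groups, which holds but is used without comment.

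\textbf{Your route.} You fix all witnesses but one to be $z_A$ and solve for the last one with (EG2), which is purely axiomatic.

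\textbf{A correction to your write-up.} The definition should simply be $a_{n-1} := x\ast w\ast z_A\ast\cdots\ast z_A$ with exactly $n-2$ copies of $z_A$; the trailing ``$\ast\cdots$'' is spurious. With that, the identity to check has no extra outer layer. Take $u = f(x)=f(y)$ and $\hat{u} = u, z_B,\ldots,z_B$ ($n-1$ entries). Then
\[ f(a_{n-1}) \;=\; u\ast(u\ast z_B\ast\cdots\ast z_B)\ast z_B\ast\cdots\ast z_B \;=\; (z_B\ast\hat{u})\ast\hat{u} \;=\; z_B \]
by (EG1) and (EG2) alone. So what you call the main obstacle is immediate. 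You do not need to specialise to $\nEl(R')$, and you do not need (EG3). The extra outer $\ast z\ast\cdots\ast z$ in your displayed identity should be dropped; it is harmless, since $z_B\ast\cdots\ast z_B=z_B$, but it is not what arises.

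\textbf{What each route buys.} Your argument never uses the underlying ring, subtraction, or additivity of $f$. It also works with any $k\in\ker_n(f)$ in place of $z_A$. So it proves the equivalence for any morphism of $n$-ary elliptic rings whose kernel $f^{-1}(z_B)$ is non-empty, and it shows the paper's remark that $n+1\in A^\times$ is needed ``to produce $z_A$'' only asks for non-emptiness of the kernel. The paper's argument is a one-line computation, but it only makes sense when $A$ and $B$ are standard $n$-ary rings.
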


      \begin{proof}
        \((\Rightarrow)\) Suppose \(x \ast_n a_1 \ast_n \cdots \ast_n a_{n-1} = y \ast_n
        b_1 \ast_n \cdots \ast_n b_{n-1}\) with \(a_i, b_j \in \ker_n(f)\), so \(f(a_i) =
        f(b_j) = z_B\). Applying \(f\) gives us
        \[
          f(x) \ast_n z_B \ast_n \cdots \ast_n z_B
          \;=\;
          f(y) \ast_n z_B \ast_n \cdots \ast_n z_B.
        \]
        We act with \(\ast (z_B, \ldots, z_B)\) on both sides and use axiom (EG2) to
        obtain \(f(x) = f(y)\).

        \((\Leftarrow)\) On the reverse, assume \(f(x) = f(y)\). We define \(\delta := y
        - x \in A\). Set
        \[
          a_1 \;:=\; z_A + \delta,
          \quad a_i \;:=\; z_A\;\; (2 \leq i \leq n-1),
          \quad b_j \;:=\; z_A\;\; (1 \leq j \leq n-1).
        \]
        It follows that
        \[
          \sum_{i=1}^{n-1} a_i - \sum_{j=1}^{n-1} b_j
          \;=\; (z_A + \delta) + (n-2)z_A - (n-1)z_A \;=\; \delta \;=\; y - x.
        \]
        Since \(x \ast_n a_1 \ast_n \cdots \ast_n a_{n-1} = y \ast_n b_1 \ast_n \cdots
        \ast_n b_{n-1}\) is equivalent to \(\sum a_i - \sum b_j = y - x\) (expanding the
        \(\ast_n\)-formula \(1 - \sum(-)\)), we have \(x \sim_{\ker_n(f)} y\). It remains
        to verify \(a_1 \in \ker_n(f)\). We have
        \[
          f(a_1) \;=\; f(z_A + \delta) \;=\; f(z_A) + f(y) - f(x)
          \;=\; z_B + 0 \;=\; z_B,
        \]
        where we used \(f(x) = f(y)\) and \(f(z_A) = z_B\).
      \end{proof}

      \begin{Rem}
        We need \(n+1\in A^{\times}\) in the reverse direction to produce the
        representative \(z_A \in A\).
      \end{Rem}

\subsubsection*{The first isomorphism theorem}

      \begin{Th}[First isomorphism theorem for \(n\)-ary elliptic rings]
        \label{thm:first_iso} Let \(f: A \to B\) be a morphism of \(n\)-ary elliptic
        rings. The following hold:
        \begin{enumerate}
          \item[(i)] \(\ker_n(f)\) is a non-empty \(n\)-ary ideal of \(A\), with \(z_A \in
            \ker_n(f)\).
          \item[(ii)] \(f\) factors uniquely through the quotient. That is to say, there
            is a unique morphism
            \[ \bar{f}: A/\ker_n(f) \to B \]
            with \(\bar{f}([x]) = f(x)\).
          \item[(iii)] The morphism \(\bar{f}\) is injective, and \(f\) induces an
            isomorphism of \(n\)-ary elliptic rings
            \[ A \big/ \ker_n(f) \xrightarrow{\sim} \im(f). \]
        \end{enumerate}
      \end{Th}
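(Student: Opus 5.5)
The plan is to assemble the theorem from Lemma~\ref{lem:ker_ideal}, Proposition~\ref{prop:ker_congruence} and the fact that \(\sim_I\) is a congruence, with \(I = \ker_n(f)\).

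\emph{Part (i).} Lemma~\ref{lem:ker_ideal} already shows that \(\ker_n(f)\) is an \(n\)-ary ideal. Under the new convention, morphisms respect the absorbing element, so \(f(z_A) = z_B\). Hence \(z_A \in \ker_n(f)\), and the kernel is non-empty. Non-emptiness matters because the quotient \(A/\ker_n(f)\) is then built from the genuine witness relation \eqref{eq:congruence}, not the trivial one.

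\emph{Part (ii).} Define \(\bar f([x]) := f(x)\). Well-definedness is the direction \((\Rightarrow)\) of Proposition~\ref{prop:ker_congruence}: if \(x \sim_{\ker_n(f)} y\) then \(f(x) = f(y)\). That \(\bar f\) is a morphism is immediate from the definition of the quotient operations:
\[
\bar f([a_1]\ast_n\cdots\ast_n[a_n]) = \bar f([a_1\ast_n\cdots\ast_n a_n]) = f(a_1)\ast_n\cdots\ast_n f(a_n),
\]
and similarly \(\bar f([a]\circ_n[b]) = f(a)\circ_n f(b)\). The unit is preserved because \(\bar f([e]) = f(e) = e_B\), and \(\bar f([z_A]) = z_B\). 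Uniqueness follows because \(\pi\colon A\to A/\ker_n(f)\) is surjective, so any \(g\) with \(g\circ\pi = f\) is determined on every class.

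\emph{Part (iii).} Injectivity of \(\bar f\) is exactly the direction \((\Leftarrow)\) of Proposition~\ref{prop:ker_congruence}: \(f(x) = f(y)\) implies \([x] = [y]\). This is the one place where the invertibility of \(n+1\) is used, through the representative \(z_A\). Next, \(\im(f)\) is closed under \(\ast_n\) and \(\circ_n\), and it contains \(e_B\) and \(z_B\), so it is a sub-\(n\)-ary ring. The corestriction \(\bar f\colon A/\ker_n(f)\to \im(f)\) is then a bijective morphism. Its set-theoretic inverse automatically preserves both operations, since they are defined pointwise by equations, so the corestriction is an isomorphism.

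\emph{Main obstacle.} The real obstacle is hidden in Proposition~\ref{prop:ker_congruence}. Its proof treats \(A\) as \(\nEl(R)\) and uses the additive structure (\(\delta = y - x\)). It also uses that \(f\) is additive in the sense \(f(z_A+\delta) = f(z_A) + f(y) - f(x)\). This is justified if \(A = \nEl(R)\) and \(f = \nEl(\varphi)\) comes from a ring map.

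For an arbitrary \(n\)-ary morphism, I would instead derive affineness from \(\ast_n\). Preserving \(\ast_n\) together with \(f(z_A) = z_B\) gives, by (EG2)-style manipulations, that \(u \mapsto f(u) - z_B\) is additive on \(A\). Concretely, \(f\bigl(1-\sum a_i\bigr) = 1 - \sum f(a_i)\). Taking all but two of the \(a_i\) equal to \(z_A\) yields a Cauchy-type identity, which should give \(f(u+v-z_A) = f(u) + f(v) - z_B\).

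I would state explicitly that we work in the setting \(A \simeq \nEl(R)\) of this section. I would then insert this short affineness check before invoking the proposition; everything else is formal.
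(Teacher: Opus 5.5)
Your proposal is correct and follows the paper's route. The paper's proof simply cites Lemma~\ref{lem:ker_ideal} for (i) and Proposition~\ref{prop:ker_congruence} for (ii)--(iii), and you fill in the omitted details: $f(z_A)=z_B$ for non-emptiness, uniqueness via surjectivity of $\pi$, and bijective morphism implies isomorphism.

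Your affineness check also correctly justifies the step $f(z_A+\delta)=f(z_A)+f(y)-f(x)$, which the paper's proof of the proposition uses without comment for a general morphism. One correction: the additive map is $x\mapsto f(z_A+x)-z_B$, not $u\mapsto f(u)-z_B$ (the identity map already fails the latter). Your concrete identity $f(u+v-z_A)=f(u)+f(v)-z_B$ is the right one.
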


      \begin{proof}
        This is Lemma~\ref{lem:ker_ideal} for part (i) and
        Proposition~\ref{prop:ker_congruence} for parts (ii) and (iii).
      \end{proof}

\subsubsection*{The kernel and the \(J\)-construction}

      When \(n+1 \in R^{\times}\), the \(J\)-construction gives a complete description
      of quotients.

      \begin{Pro} \label{prop:kernel_is_J}
        Let \(g:R \to S\) be a ring homomorphism. Then
        \[ \ker_n\bigl(\nEl(g)\bigr) \;=\; J\bigl(\ker(g)\bigr), \]
        where \(\ker(g) = g^{-1}(0) \subseteq R\) is the classical kernel.
      \end{Pro}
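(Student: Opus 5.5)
Both sides are subsets of \(R\), so the plan is to prove equality by a chain of equivalences for an arbitrary \(x\in R\). Write \(z_R=\frac1{n+1}\) and \(z_S=\frac1{n+1}\). The first point to settle is that these make sense and correspond under \(g\). Since \(g\) is a unital ring homomorphism, \(g(n+1)=n+1\) in \(S\), so \(n+1\in S^\times\) and \(g(z_R)=z_S\). This fits the new convention that morphisms respect the absorbing element; by Lemma~\ref{le:zero} we may indeed regard \(z_S\) as the absorbing element of \(\nEl(S)\). The map \(\nEl(g)\) is \(g\) applied pointwise.

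By definition, \(x\in\ker_n(\nEl(g))\) means \(g(x)=z_S\). Multiplying by the unit \(n+1\) in \(S\), this is equivalent to \((n+1)g(x)=1\). Using additivity and unitality of \(g\), that in turn is equivalent to
\[ g\bigl(1-(n+1)x\bigr)=0, \]
which says exactly \(g(\Sigma_n(x))=0\), i.e. \(\Sigma_n(x)\in\ker(g)\). By the definition of \(J\), this is precisely \(x\in J(\ker(g))\). Since each step is reversible, the two sets coincide.

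No step here is a real obstacle; the only point needing care is the reversibility of the middle equivalence, namely passing between \(g(x)=z_S\) and \((n+1)g(x)=1\). This is where invertibility of \(n+1\) in \(S\), and not merely in \(R\), is used, and it is guaranteed by \(g\) being unital. As a sanity check, the result is consistent with Proposition~\ref{prop:J_ideal_obs}(ii): for \(g=\mathrm{id}\) we get \(\ker_n=\{z_R\}=J((0))\). It is also consistent with Lemma~\ref{lem:ker_ideal}, since \(J(\ker g)\) is an \(n\)-ary ideal by Proposition~\ref{prop:J_ideal}.
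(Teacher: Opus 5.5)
Your proof is correct and is essentially the paper's own argument: the same chain of equivalences \(g(x)=z_S \iff (n+1)g(x)=1 \iff g\bigl(1-(n+1)x\bigr)=0 \iff \Sigma_n(x)\in\ker(g) \iff x\in J(\ker(g))\). The paper relies on the standing convention for \(n+1\in S^{\times}\); your remark that unitality of \(g\) already gives \(n+1\in S^{\times}\) and \(g(z_R)=z_S\) makes that step explicit.
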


      \begin{proof}
        Take \(x \in R\). We have
        \begin{align*}
          x \in \ker_n(\nEl(g)) & \iff g(x) = z_S = \tfrac{1}{n+1} \\[2pt]
                                & \iff (n+1)g(x) = 1               \\[2pt]
                                & \iff g\big((n+1)x) - 1 = 0       \\[2pt]
                                & \iff g\bigl(1 - (n+1)x\bigr) = 0 \\[2pt]
                                & \iff \Sigma_n(x) \in \ker(g)     \\[2pt]
                                & \iff x \in J\!\bigl(\ker(g)\bigr). \qedhere
        \end{align*}
      \end{proof}

      \begin{Th} \label{thm:quotient_iso}
        Let \(\fa \subseteq R\) be any ideal. The canonical ring homomorphism \(\pi: R
        \to R/\fa\) induces an isomorphism of \(n\)-ary elliptic rings
        \[ \nEl(R) \big/ J(\fa) \cong \nEl(R/\fa). \]
      \end{Th}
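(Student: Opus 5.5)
The plan is to apply the first isomorphism theorem (Theorem~\ref{thm:first_iso}) to the morphism \(f := \nEl(\pi): \nEl(R) \to \nEl(R/\fa)\), and then identify its kernel via Proposition~\ref{prop:kernel_is_J}. Functoriality of \(R \mapsto \nEl(R)\) makes \(f\) an \(n\)-ary ring morphism. It acts pointwise: \(f(a_1 \ast \cdots \ast a_n) = \pi(1 - \sum a_i) = 1 - \sum \pi(a_i)\), and similarly for \(\circ_n\) because \(\pi\) preserves \(n+1\) and products. It also sends \(0 \mapsto 0\). Moreover, \(f\) respects the distinguished absorbing elements, since \(\pi(1/(n+1)) = 1/(n+1)\) in \(R/\fa\). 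Here \(n+1\) is invertible in \(R/\fa\) because \(\pi\) is a unital ring map, so the standing convention applies to both rings.

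First I will note that \(f\) is surjective, because \(\pi\) is surjective and \(f\) agrees with \(\pi\) on underlying sets; hence \(\im(f) = \nEl(R/\fa)\). Second, Proposition~\ref{prop:kernel_is_J} with \(g = \pi\) gives \(\ker_n(f) = J(\ker \pi) = J(\fa)\). Third, Theorem~\ref{thm:first_iso}(iii) yields an isomorphism
\[ \nEl(R)\big/\ker_n(f) \xrightarrow{\sim} \im(f), \]
which after the two identifications is exactly \(\nEl(R)/J(\fa) \cong \nEl(R/\fa)\).

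The step I expect to need the most care is checking that the quotient \(\nEl(R)/J(\fa)\) in the statement is the same object as \(\nEl(R)/\ker_n(f)\) in Theorem~\ref{thm:first_iso}, that is, that the congruences coincide. Here \(\sim_{J(\fa)}\) is the congruence built from witnesses as in \eqref{eq:congruence}. I will handle this with Proposition~\ref{prop:ker_congruence}, which says \(x \sim_{J(\fa)} y\) if and only if \(f(x)=f(y)\), i.e.\ \(x - y \in \fa\). That proposition also guarantees that \(J(\fa)\) is non-empty: it contains \(z_R\), since \(\Sigma_n(z_R)=0\in\fa\). As a sanity check I will verify the reverse direction directly. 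Given \(x-y\in\fa\), take witnesses \(a_1 = z + (y-x)\) and all other \(a_i, b_j = z\). Then \(\Sigma_n(a_1) = -(n+1)(y-x) \in \fa\), so all witnesses lie in \(J(\fa)\), and the required equation \(\sum a_i - \sum b_j = y - x\) holds. Granting this, the equivalence classes of \(\sim_{J(\fa)}\) are precisely the cosets \(x+\fa\), and the induced operations match those of \(\nEl(R/\fa)\), which completes the argument.
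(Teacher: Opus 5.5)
Your proposal is correct and is essentially the paper's proof: you apply Theorem~\ref{thm:first_iso} to the surjective morphism \(\nEl(\pi)\), whose \(n\)-ary kernel is \(J(\fa)\) by Proposition~\ref{prop:kernel_is_J}. Your extra check that \(\sim_{J(\fa)}\) coincides with congruence modulo \(\fa\) is already contained in Theorem~\ref{thm:first_iso} via Proposition~\ref{prop:ker_congruence}, and your explicit witness computation for it is correct.
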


      \begin{proof}
        The quotient map \(\pi: R \twoheadrightarrow R/\fa\) induces a surjective
        morphism \(\nEl(\pi): \nEl(R) \twoheadrightarrow \nEl(R/\fa)\). By Proposition
        \ref{prop:kernel_is_J},
        \[ \ker_n(\nEl(\pi)) \;=\; J(\ker(\pi)) \;=\; J(\fa). \]
        Theorem \ref{thm:first_iso} now gives \(\nEl(R)/J(\fa) \cong \im(\nEl(\pi)) =
        \nEl(R/\fa)\).
      \end{proof}

      The following diagram summarises the interplay between classical kernels, the
      \(J\)-construction, and \(n\)-ary kernels. It commutes by
      Proposition~\ref{prop:kernel_is_J}.
      \[
        \begin{array}{ccccccc}
          \ker(g)                       &       & \subseteq       &           & R                                & \xrightarrow{g}       & S                                \\[6pt]
          \Big\downarrow\scriptstyle{J} &       &                 &           & \Big\downarrow\scriptstyle{\nEl} &                       & \Big\downarrow\scriptstyle{\nEl} \\[4pt]
          J(\ker g)                     & \;=\; & \ker_n(\nEl(g)) & \subseteq & \nEl(R)                          & \xrightarrow{\nEl(g)} & \nEl(S)
        \end{array}
      \]

\subsubsection{Cancellativity and \(n\)-nary fields: Ideal definitions}

      Recall Definitions~\ref{def:cancellative} and \ref{def:nary_field}. These were
      defined using the properties we want ``on the face'', not the ``algebraically
      sound way''. Of course, can always define these terms using prime and maximal
      ideals, but we need \(n+1\) to be invertible (we need \(1/(n+1)\)) to work these
      constructions sensibly.

      \begin{De} \label{def:ideal_cancel}
        An \(n\)-ary elliptic ring \(A\) is \emph{ideal-cancellative} if there exist an
        \(n\)-ary elliptic ring \(A'\) and an \(n\)-ary prime ideal \(\p\subseteq A'\)
        such that
        \[ A \cong A'\big/\p. \]
      \end{De}

      \begin{De} \label{def:ideal_field}
        An \(n\)-ary elliptic ring \(A\) is an \emph{ideal-field} if there exist an
        \(n\)-ary elliptic ring \(A'\) and an \(n\)-ary maximal ideal \(\fm\subseteq A'\)
        such that
        \[ A \cong A'\big/\fm. \]
      \end{De}

      \begin{Note} \label{nt:cancellative_field}
        To work sensibly in our category where \(n+1\) is invertible, we need to amend
        the two definitions and say that \(\nEl(R)\) is ideal-cancellative (respectively
        an ideal-field) if there exists a ring \(S\) with \(n+1\in S^{\times}\), such
        that \(\nEl(R)\simeq \nEl(S)/\p\) where \(\p\subseteq(\nEl(S))\) is an
        \(\circ_n\)-prime (respectively \(\circ_n\)-maximal) ideal.
      \end{Note}

      We will need the following observation throughout:

      \begin{Le} \label{lem:norm_iso}
        The norm map \(\Sigma_n: \nEl(R) \xto{\sim} R\) is a monoid isomorphism.
      \end{Le}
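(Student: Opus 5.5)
The plan is to verify directly that \(\Sigma_n\) is a bijective monoid homomorphism \((R,\circ_n,0)\to(R,\cdot,1)\). Under the standing convention of this section, \(n+1\in R^{\times}\). The map is determined by the affine formula \(\Sigma_n(a)=1-(n+1)a\).

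\emph{Homomorphism.} By Lemma~\ref{lem:mult}, \(\Sigma_n(a\circ_n b)=\Sigma_n(a)\Sigma_n(b)\) and \(\Sigma_n(0)=1\), so \(\Sigma_n\) is already a monoid homomorphism for every ring \(R\).

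\emph{Bijectivity.} Bijectivity uses the invertibility of \(n+1\) in an essential way. I would write down the explicit inverse
\[
  \Sigma_n^{-1}(m) \;=\; \frac{1-m}{n+1} \;=\; z\,(1-m),
\]
with \(z=1/(n+1)\). A one-line computation in each direction gives \(\Sigma_n(z(1-m)) = 1-(1-m) = m\) and \(z(1-\Sigma_n(a)) = z(n+1)a = a\). Alternatively, one can appeal to Lemma~\ref{lem:sigma_image}: \(n+1\neq 0\) because it is a unit, assuming \(R\) is not the zero ring, so \(\Sigma_n\) is injective with image \(1+(n+1)R\). Since \(n+1\) is a unit, \((n+1)R=R\), hence \(1+(n+1)R=R\) and the map is surjective. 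I would present the explicit inverse, since it also covers the degenerate zero ring trivially. The inverse of a bijective monoid homomorphism is automatically a homomorphism, so the proof is complete.

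The only step requiring care is surjectivity, where we must use \((n+1)R=R\); that is the sole place the hypothesis \(n+1\in R^\times\) enters. As a check, under this isomorphism the absorbing element \(z\) maps to \(0\) (Lemma~\ref{le:zero}(iii)), consistent with \(0\) being absorbing for multiplication in \(R\).
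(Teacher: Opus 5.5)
Your proof is correct and is the paper's argument: you get multiplicativity and \(\Sigma_n(0)=1\) from Lemma~\ref{lem:mult}, and bijectivity from the explicit inverse \(m\mapsto(1-m)/(n+1)\), which is exactly the inverse the paper writes down. Your two verification computations are correct, and they make explicit the check the paper leaves implicit, which is also where the hypothesis \(n+1\in R^\times\) is used.
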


      \begin{proof}
        Define the inverse of \(p\mapsto 1-(n+1)p\),\; as\; \(q\mapsto(1-q)/(n+1)\).
      \end{proof}

      \begin{Cor} \label{cor:cancel_invertible}
        \(\nEl(R)\) is \(\circ_n\)-cancellative if and only if \(R\) is an integral
        domain.
      \end{Cor}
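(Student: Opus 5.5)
The plan is to transport the question along the norm map. Lemma~\ref{lem:norm_iso} tells us that, since \(n+1\in R^{\times}\), the norm \(\Sigma_n\colon(\nEl(R),\circ_n,0)\to(R,\cdot,1)\) is a monoid isomorphism. Monoid isomorphisms preserve and reflect cancellativity, absorbing elements and zero-divisors, so it will suffice to show two things: the absorbing element \(z=1/(n+1)\) of \(\nEl(R)\) corresponds to \(0\in R\), and \(\circ_n\)-cancellativity in the sense of Definition~\ref{def:cancellative} translates into the statement that every nonzero element of \(R\) is cancellative for multiplication. That last statement is exactly the definition of an integral domain, taking \(R\neq 0\) as part of that definition. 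This will be the key step.

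Concretely, I would first record that \(\Sigma_n(z)=0\) by Lemma~\ref{le:zero}(iii), and that \(z\) is the absorbing element of \(\nEl(R)\) by Lemma~\ref{le:zero}(i). Hence for \(a\in\nEl(R)\) we have \(a\neq z\) if and only if \(\Sigma_n(a)\neq 0\).

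Next I would apply Lemma~\ref{lem:cancel_sigma}. It says \(a\circ_n b=a\circ_n c\) if and only if \((b-c)\Sigma_n(a)=0\).

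\textbf{Integral domain implies cancellative.} Suppose \(R\) is a domain, \(a\neq z\) and \(a\circ_n b=a\circ_n c\). Then \(\Sigma_n(a)\neq 0\) and \((b-c)\Sigma_n(a)=0\), so \(b=c\).

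\textbf{Cancellative implies integral domain.} Suppose \(xy=0\) in \(R\) with \(x\neq 0\). By Lemma~\ref{lem:sigma_image}, or directly from invertibility of \(n+1\), surjectivity of \(\Sigma_n\) gives some \(a\) with \(\Sigma_n(a)=x\), and then \(a\neq z\). Now take \(b=y\) and \(c=0\). Then \((b-c)\Sigma_n(a)=yx=0\), so \(a\circ_n y=a\circ_n 0\), and cancellativity forces \(y=0\). This is essentially Proposition~\ref{prop:cancel_char} specialised to the case \(\im(\Sigma_n)=R\), which I could cite instead.

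The only delicate point I expect is the zero ring. If \(R=0\), then \(\nEl(R)\) consists of the single element \(z=0\), so it is vacuously cancellative but not an integral domain. I would handle this either by adopting the convention that \(R\neq 0\), or by noting that the degenerate case must be excluded from the statement.
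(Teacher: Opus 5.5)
Your proof is correct and is the paper's argument unpacked one level. The paper uses Lemma~\ref{lem:norm_iso} to get \(\im(\Sigma_n)=R\) and then cites Proposition~\ref{prop:cancel_char}, whose proof is exactly your use of Lemma~\ref{lem:cancel_sigma} together with the fact that \(a\neq z\) if and only if \(\Sigma_n(a)\neq 0\). Your caveat about \(R=0\) is also right and is missed by the paper's one-line proof: there \(\nEl(0)=\{z\}\) is vacuously \(\circ_n\)-cancellative, so the statement needs \(R\neq 0\).
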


      \begin{proof}
        Lemma \ref{lem:norm_iso} tells us that the norm map is an isomorphism. By
        Proposition~\ref{prop:cancel_char}, \(\nEl(R)\) is \(\circ_n\)-cancellative if
        and only if every nonzero element of \(R\) is a non-zero-divisor, but this just
        means that \(R\) is an integral domain.
      \end{proof}

      \begin{Rem}
        When \(n+1 \notin R^{\times}\), \(\circ_n\)-cancellativity is strictly weaker
        than \(R\) being an integral domain. We can only deduce that elements in the
        proper subset \(\im(\Sigma_n) = 1+(n+1)R \subsetneq R\) are non-zero-divisors.
        For example, take \(R = \Z/9\Z\) and \(n = 2\) (so \(n+1 = 3 \notin
        R^{\times}\)). Then \(\im(\Sigma_2) = 1 + 3R = \{1, 4, 7\} \pmod{9}\). Each of
        these is a unit in \(\Z/9\Z\), so every element of
        \(\im(\Sigma_2)\setminus\{0\}\) is a non-zero-divisor. Hence \(\nEl(\Z/9\Z)\) is
        \(\circ_2\)-cancellative.
      \end{Rem}

      \begin{Pro}
        \(\circ_n\)-cancellativeness (Definition~\ref{def:cancellative}) is equivalent to
        ideal-cancellativeness.
      \end{Pro}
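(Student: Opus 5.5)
Throughout I work in the setting of the new convention and Note~\ref{nt:cancellative_field}: the rings are of the form \(\nEl(R)\) with \(n+1\in R^\times\), and ideals contain \(z\). The plan is to move everything to the base ring, where the statement becomes the classical fact that \(R\) is an integral domain iff \(R\cong S/\p\) for some ring \(S\) and prime ideal \(\p\). The two dictionary entries are Corollary~\ref{cor:cancel_invertible} on the cancellative side and Theorem~\ref{thm:quotient_iso} together with Proposition~\ref{prop:J_prime} on the quotient side.

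\emph{(\(\Rightarrow\))} Suppose \(\nEl(R)\) is \(\circ_n\)-cancellative. By Corollary~\ref{cor:cancel_invertible}, \(R\) is an integral domain, so \((0)\subseteq R\) is a prime ideal. By Proposition~\ref{prop:J_prime}, \(J((0))=\{z\}\) is then an \(\circ_n\)-prime ideal of \(\nEl(R)\). Theorem~\ref{thm:quotient_iso} gives \(\nEl(R)/J((0))\cong\nEl(R/(0))=\nEl(R)\). So we may take \(S=R\) and \(\p=J((0))\), and \(\nEl(R)\) is ideal-cancellative.

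\emph{(\(\Leftarrow\))} Suppose \(\nEl(R)\cong\nEl(S)/\mathfrak P\), where \(n+1\in S^\times\) and \(\mathfrak P\) is an \(\circ_n\)-prime ideal containing \(z_S\).

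\emph{Step 1: \(\mathfrak P=J(\p)\) for an ideal \(\p\) of \(S\).} Set \(\p:=\Sigma_n(\mathfrak P)\). By Lemma~\ref{lem:norm_iso}, \(\Sigma_n\) is a bijection, so \(\mathfrak P=J(\p)\). It remains to check that \(\p\) is an ideal of \(S\).
\begin{itemize}
\item Multiplicative absorption: since \(\Sigma_n\) is a multiplicative isomorphism, (I2) gives \(S\p\subseteq\p\).
\item Additivity: the computation in Proposition~\ref{prop:J_ideal} gives \(\Sigma_n(x_1\ast\cdots\ast x_n)=-\sum_i\Sigma_n(x_i)\).
\item Take \(x_3=\dots=x_n=z\); this is allowed because \(z\in\mathfrak P\) and \(\Sigma_n(z)=0\). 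Then \(\alpha,\beta\in\p\) implies \(-(\alpha+\beta)\in\p\).
\item Taking \(\beta=0\) gives \(-\alpha\in\p\), so \(\p\) is closed under negation and hence under addition.
\end{itemize}
This is the step I expect to be the main obstacle. It is exactly where the presence of \(z\) in every ideal is used, and the sign bookkeeping should be checked carefully.

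\emph{Step 2: \(\p\) is a prime ideal of \(S\).} Since \(\mathfrak P\neq\nEl(S)\), Proposition~\ref{prop:J_ideal_obs}(i) gives \(\p\neq S\). If \(st\in\p\), write \(s=\Sigma_n(a)\) and \(t=\Sigma_n(b)\), which is possible by surjectivity of \(\Sigma_n\). Then \(a\circ_n b\in\mathfrak P\), so \(a\) or \(b\) lies in \(\mathfrak P\), and hence \(s\) or \(t\) lies in \(\p\).

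\emph{Step 3: conclude.} Theorem~\ref{thm:quotient_iso} gives \(\nEl(R)\cong\nEl(S)/J(\p)\cong\nEl(S/\p)\), and \(S/\p\) is an integral domain. By Corollary~\ref{cor:cancel_invertible}, \(\nEl(S/\p)\) is \(\circ_n\)-cancellative. Cancellativity transfers along the isomorphism of \(n\)-ary rings. The absorbing element also transfers, since isomorphisms respect \(z\) by the convention. Therefore \(\nEl(R)\) is \(\circ_n\)-cancellative, and equivalently \(R\) is a domain.
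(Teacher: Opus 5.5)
Your proof is correct and follows the paper's route. The forward direction uses \(J((0))\). For the converse, you identify the prime ideal with \(J(\q)\) for a classical prime \(\q\subseteq S\) and then apply Theorem~\ref{thm:quotient_iso} and Corollary~\ref{cor:cancel_invertible}.

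Your Steps 1--2 actually prove what the paper only asserts by citing the norm-ideal correspondence, since Proposition~\ref{prop:J_prime} gives only the converse direction, that \(J\) of a prime is prime. Your proof uses \(z\in\mathfrak P\) and \(\Sigma_n(x_1\ast\cdots\ast x_n)=-\sum_i\Sigma_n(x_i)\), and both check out. You also transfer cancellativity directly along \(\nEl(R)\cong\nEl(S/\p)\). This avoids the paper's detour through \(R\cong S/\q\) as rings, which needs more than the monoid isomorphism of Lemma~\ref{lem:norm_iso}.
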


      \begin{proof}
        Let \(\nEl(R)\) be \(\circ_n\)-cancellative. We know form
        Corollary~\ref{cor:cancel_invertible} that \(R\) is an integral domain, and
        subsequently \((0)\) is a prime ideal of \(R\). Hence, \(J((0))\) is an \(n\)-ary
        prime ideal of \(\nEl(R)\), and \(\nEl(R)\cong\nEl(R)/J((0))\). Thus \(\nEl(R)\)
        is ideal-cancellative with \(A'=\nEl(R)\) and \(\p=J((0))\).

        We point to Note~\ref{nt:cancellative_field}. The norm-ideal correspondence
        (Theorem~\ref{thm:quotient_iso} and Proposition~\ref{prop:J_prime}) identifies
        \(\p = J(\q)\) with a classical prime ideal \(\q\subseteq S\), and
        \(\nEl(S)/J(\q)\cong\nEl(S/\q)\). Since \(S/\q\) is an integral domain, and
        \(\nEl(R)\cong\nEl(S/\q)\) implies \(R\cong S/\q\) as rings
        (Lemma~\ref{lem:norm_iso}), \(R\) is an integral domain.
        Corollary~\ref{cor:cancel_invertible} allows us to deduce that \(\nEl(R)\) is
        \(\circ_n\)-cancellative.
      \end{proof}

\subsubsection*{The \(\circ_n\)-inverse and \(n\)-ary fields}

      We have seen that \(0_R\) is the identity of the \(\circ_n\)-monoid and that
      \(z\) is the absorbing element. The following lemma computes
      \(\circ_n\)-inverses.

      \begin{Le} \label{lem:circ_inverse}
        An element \(a\in\nEl(R)\) has an \(\circ_n\)-inverse \(b\) if and only if
        \(\Sigma_n(a)\in R^{\times}\), in which case
        \[
          b \;=\; \frac{-a}{\Sigma_n(a)}
          \;=\; z\cdot\left(1-\frac{1}{\Sigma_n(a)}\right).
        \]
        In particular, the absorbing element \(z = 1/(n+1)\) has no \(\circ_n\)-inverse
        (since \(\Sigma_n(z) = 0\)).
      \end{Le}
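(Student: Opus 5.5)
The plan is to reduce everything to the multiplicative monoid of \(R\) via the norm, exactly as in Lemma~\ref{le:units_general}, and then verify the explicit formula by direct computation. By Lemma~\ref{lem:mult}, \(\Sigma_n\) is a monoid homomorphism with \(\Sigma_n(0)=1\). So if \(a\circ_n b=0\), then \(\Sigma_n(a)\Sigma_n(b)=1\), which forces \(\Sigma_n(a)\in R^{\times}\). This gives the ``only if'' direction. Conversely, assume \(\Sigma_n(a)\in R^\times\). I will set \(b=-a\,\Sigma_n(a)^{-1}\), which is the element already shown in the proof of Lemma~\ref{le:units_general} to satisfy \(a\circ_n b=0\). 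I will simply cite that computation: expand \(a+b-(n+1)ab\) and use \((n+1)a-1=-\Sigma_n(a)\). Alternatively, since \(n+1\) is now invertible, Lemma~\ref{lem:norm_iso} says \(\Sigma_n\) is a monoid isomorphism \(\nEl(R)\to(R,\cdot,1)\). Inverses therefore correspond exactly, and \(b=\Sigma_n^{-1}\bigl(\Sigma_n(a)^{-1}\bigr)\) is forced and unique.

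The second key step is to check that the two displayed expressions for \(b\) agree. Write \(\Sigma_n(a)^{-1}=s\). Then
\[
z\Bigl(1-\frac{1}{\Sigma_n(a)}\Bigr)=\frac{1-s}{n+1},
\]
which is precisely \(\Sigma_n^{-1}(s)\) by the inverse formula \(q\mapsto (1-q)/(n+1)\) of Lemma~\ref{lem:norm_iso}. To compare with \(-a s\), I will compute
\[
1-s=\frac{\Sigma_n(a)-1}{\Sigma_n(a)}=\frac{-(n+1)a}{\Sigma_n(a)}.
\]
Dividing by \(n+1\) gives \(-a/\Sigma_n(a)\), as required. This also shows that the two descriptions are the same element. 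Uniqueness of the inverse follows from injectivity of \(\Sigma_n\), or from the general fact that inverses in a commutative monoid are unique.

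Finally, for the absorbing element, Lemma~\ref{le:zero}(iii) gives \(\Sigma_n(z)=0\). Since \(0\notin R^\times\) whenever \(R\neq 0\), the criterion just proved shows \(z\) has no inverse. One can also see this directly: \(z\circ_n b=z\neq 0\) by Lemma~\ref{le:zero}(i). In the degenerate zero ring, \(z=0\), and I would remark that the statement implicitly assumes \(R\neq 0\). No step is a real obstacle here. The only point needing care is the algebraic identity between the two forms of \(b\), together with not confusing the \(\circ_n\)-unit \(0\) with the absorbing element \(z\).
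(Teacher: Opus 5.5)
Your proposal is correct and follows essentially the paper's route. The criterion is transported through the norm: the paper cites only Lemma~\ref{lem:norm_iso}, restricting the monoid isomorphism to units, which is the alternative you give. The equality of the two forms of \(b\) is the same manipulation using \(\Sigma_n(a)-1=-(n+1)a\). Your caveat that the claim about \(z\) requires \(R\neq 0\) is a valid small refinement that the paper leaves implicit.
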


      \begin{proof}
        This follows from Lemma~\ref{lem:norm_iso}, since an isomorphism of monoids
        remains one if restricted to invertible elements. Simple transformation shows
        that
        \[
          b \;=\; -a/\Sigma_n(a) \;=\; \frac{-(1-\Sigma_n(a))}{(n+1)\Sigma_n(a)}
          \;=\; \frac{1}{n+1} - \frac{1}{(n+1)\Sigma_n(a)},
        \]
        giving us our desired form.
      \end{proof}

      \begin{Th} \label{thm:nary_field_char}
        The following are equivalent:
        \begin{enumerate}
          \item[(i)] \(\nEl(R)\) is an \(n\)-ary elliptic field.
          \item[(ii)] \(R\) is a (classical) field.
          \item[(iii)] \(\nEl(R) \cong \nEl(R/\fm)\) for the (unique) maximal ideal \(\fm
            = (0)\) of \(R\).
          \item[(iv)] \(J((0))=\{z\}\) and every \(n\)-ary ideal of \(\nEl(R)\) is either
            \(\emptyset\), \(\{z\}\), or \(\nEl(R)\).
        \end{enumerate}
      \end{Th}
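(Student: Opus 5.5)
The whole argument runs through Lemma~\ref{lem:norm_iso}: under the standing convention \(n+1\in R^\times\), the norm \(\Sigma_n\colon(\nEl(R),\circ_n,0)\to(R,\cdot,1)\) is a monoid isomorphism. It sends \(z=1/(n+1)\) to \(0\) by Lemma~\ref{le:zero}(iii), so it identifies \(\circ_n\)-absorbing elements with multiplicative zeros. I would prove (i)\(\Leftrightarrow\)(ii), then (ii)\(\Leftrightarrow\)(iii), then (ii)\(\Leftrightarrow\)(iv).

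\emph{(i)\(\Leftrightarrow\)(ii).} By Definition~\ref{def:nary_field}, (i) says that every \(a\neq z\) has a \(\circ_n\)-inverse. By Lemma~\ref{lem:circ_inverse} (equivalently Lemma~\ref{le:units_general}), this holds exactly when \(\Sigma_n(a)\in R^\times\) for all \(a\neq z\). Since \(\Sigma_n\) is a bijection with \(\Sigma_n(z)=0\), this is equivalent to every nonzero element of \(R\) being a unit, which is (ii). Two degenerate cases need a line each. The zero ring must be excluded, or (ii) read as ``\(R\) a field''. On the other side, the ``group'' alternative in Definition~\ref{def:nary_field} cannot occur when \(R\neq 0\), because \(z\) is never invertible.

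\emph{(ii)\(\Leftrightarrow\)(iii).} If \(R\) is a field, its only maximal ideal is \((0)\), so \(R/\fm=R\) and (iii) is the identity isomorphism; Theorem~\ref{thm:quotient_iso} gives the same thing as \(\nEl(R)/J((0))\cong\nEl(R)\). Conversely, the word ``unique'' in (iii) asserts that \((0)\) is the unique maximal ideal. That is precisely the statement that \(R\) is a field. The intended content is presumably that \(R/\fm\) is a field, so \(R\cong R/\fm\) is one too, using that \(\Sigma_n\) together with the \(\ast\)-structure recovers the ring structure. I would point this out rather than prove anything deeper.

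\emph{(ii)\(\Rightarrow\)(iv).} We have \(J((0))=\{z\}\) by Proposition~\ref{prop:J_ideal_obs}(ii). Now let \(I\) be a nonempty \(n\)-ary ideal. By (I2) and absorption (Lemma~\ref{le:zero}(i)), \(z\in I\). Suppose \(I\) contains some \(a\neq z\). Then \(a\) is a \(\circ_n\)-unit by (i), so \(0=a\circ_n a^{-1}\in I\), and therefore \(r=0\circ_n r\in I\) for every \(r\), giving \(I=\nEl(R)\).

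\emph{(iv)\(\Rightarrow\)(ii).} Take any classical ideal \(\fa\subseteq R\). By Proposition~\ref{prop:J_ideal}, \(J(\fa)\) is an \(n\)-ary ideal. It is nonempty, since it contains \(z\) because \(\Sigma_n(z)=0\in\fa\). So by (iv), either \(J(\fa)=\{z\}\) or \(J(\fa)=\nEl(R)\). Since \(\Sigma_n\) is a bijection, \(\fa=\Sigma_n(J(\fa))\). Hence \(\fa=(0)\) or \(\fa=R\), and Proposition~\ref{prop:J_ideal_obs}(i) covers the second case. So \(R\) has only the trivial ideals. Also \(R\neq 0\), since otherwise \(z=0\) and the trichotomy degenerates. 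Therefore \(R\) is a field.

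The main obstacle is (iii). As literally stated it is either tautological or presupposes its own conclusion, so the real work is choosing the reading in which it is meaningful. I would adopt the reading ``\((0)\) is a maximal ideal and \(\nEl(R)\cong\nEl(R/\fm)\)'', which makes the equivalence with (ii) immediate.
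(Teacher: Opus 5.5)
Your proof is correct, and for (i)\(\Leftrightarrow\)(ii) and (ii)\(\Leftrightarrow\)(iii) it matches the paper's: both use the norm isomorphism plus Lemma~\ref{lem:circ_inverse}, and both treat (iii) as essentially tautological.

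The genuine difference is in (ii)\(\Leftrightarrow\)(iv). The paper claims that the \(n\)-ary ideals of \(\nEl(R)\) are exactly the sets \(J(\fa)\). It then transports ``only trivial ideals'' along this correspondence in both directions at once. However, the results it cites (Proposition~\ref{prop:J_ideal}, Theorem~\ref{thm:quotient_iso}) only show that each \(J(\fa)\) is an \(n\)-ary ideal. That every non-empty \(n\)-ary ideal \(I\) has this form is taken for granted. It does hold: \(\Sigma_n(I)\) contains \(0\), and it is closed under multiplication by \(R=\Sigma_n(\nEl(R))\) and under \((\alpha_1,\dots,\alpha_n)\mapsto -\sum\alpha_i\). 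Since \(n\ge 2\), this makes \(\Sigma_n(I)\) a classical ideal with \(I=J(\Sigma_n(I))\).

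You avoid needing this classification:
\begin{itemize}
\item For (ii)\(\Rightarrow\)(iv) you argue directly inside the monoid: \(z\in I\) by absorption, and any \(a\neq z\) in \(I\) is a \(\circ_n\)-unit, so \(0\in I\) and \(I=\nEl(R)\).
\item For (iv)\(\Rightarrow\)(ii) you use only the proven direction, that \(J(\fa)\) is an ideal, together with \(\Sigma_n(J(\fa))=\fa\).
\end{itemize}
Your route is therefore more self-contained. The paper's route is shorter and, once the missing surjectivity is supplied, gives the stronger statement that \(\fa\mapsto J(\fa)\) is an isomorphism of ideal lattices.

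One small correction. In (iv)\(\Rightarrow\)(ii) you write that \(R\neq 0\) ``since otherwise the trichotomy degenerates'', which reads as a deduction from (iv). It is not one: for \(R=0\), both (i) and (iv) hold while (ii) fails. So \(R\neq 0\) must be a standing hypothesis, exactly as you said in the (i)\(\Leftrightarrow\)(ii) part.
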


      \begin{proof}
        \((i)\Leftrightarrow (ii)\): This is Lemmas~\ref{lem:circ_inverse} and
        ~\ref{lem:norm_iso}.

        \((ii)\Leftrightarrow (iii)\): This is immediate since \(R\cong R/(0)\) and
        \((0)\) is the unique maximal ideal of a field.

        \((ii)\Leftrightarrow (iv)\): Recall that \(J(\fa) = \{x\in R\mid
        \Sigma_n(x)\in\fa\}\). By Theorem~\ref{thm:quotient_iso} and
        Proposition~\ref{prop:J_ideal} (the norm-ideal correspondence), we know that the
        \(n\)-ary ideals of \(\nEl(R)\) are exactly the sets \(J(\fa)\), for ideals
        \(\fa\subseteq R\). These are trivial (\(\emptyset\), \(\{z\}\), or \(\nEl(R)\))
        if and only if the ideals of \(R\) are trivial (\((0)\) and \(R\)). But this just
        means \(R\) is a field. The condition \(J((0)) = \{z\}\) says \(\Sigma_n\) has a
        unique zero, which is always true when \(n+1\in R^{\times}\) (namely \(z\)), so
        this condition is automatic.
      \end{proof}

      In like-manner to Theorem~\ref{th:field_is_cancellative}, \(\nEl(R)\) begin an
      ideal-field implies it being ideal-cancellativ. This follows immediately from the
      theorem below:

      \begin{Th}[Maximal implies prime] \label{thm:max_implies_prime}
        A maximal \(n\)-ary ideal \(\fm \subseteq \nEl(R)\) is prime.
      \end{Th}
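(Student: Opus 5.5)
The plan is to transport the question to the base ring $R$ via the norm-ideal correspondence. Under the standing convention that $n+1\in R^{\times}$, every $n$-ary ideal of $\nEl(R)$ is of the form $J(\fa)$ for a classical ideal $\fa\subseteq R$; this is how the correspondence was used in the proof of Theorem~\ref{thm:nary_field_char}. Then we use the classical fact that maximal ideals are prime, and pull primality back with Proposition~\ref{prop:J_prime}.

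\textbf{Step 1: write $\fm=J(\fa)$.} Given a maximal $n$-ary ideal $\fm$, set $\fa:=\Sigma_n(\fm)\subseteq R$. The key point is that $\fa$ is a classical ideal.
\begin{itemize}
\item[(a)] Closure under multiplication by $R$ follows from Lemma~\ref{lem:norm_iso} and Lemma~\ref{lem:mult}. Since $\Sigma_n$ is a monoid isomorphism, every $r\in R$ is $\Sigma_n(s)$ for some $s$, and then $r\Sigma_n(x)=\Sigma_n(s\circ_n x)$.
\item[(b)] Closure under addition needs (I1). For $x_1,\dots,x_n\in\fm$, the computation in Proposition~\ref{prop:J_ideal} gives $\Sigma_n(x_1\ast\cdots\ast x_n)=-\sum_i\Sigma_n(x_i)$. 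Pad with $z\in\fm$ (recall ideals contain $z$ and $\Sigma_n(z)=0$) to get $-(\alpha+\beta)\in\fa$ for any $\alpha,\beta\in\fa$. Negation comes from taking $x_1=x$ and all other entries equal to $z$. Together these give $\alpha+\beta\in\fa$.
\end{itemize}
Hence $\fa$ is an ideal. Since $\Sigma_n$ is bijective, $\fm=\Sigma_n^{-1}(\fa)=J(\fa)$.

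\textbf{Step 2: $\fa$ is maximal in $R$.} By Proposition~\ref{prop:J_ideal_obs}(i), $\fm\neq\nEl(R)$ gives $\fa\neq R$. If $\fa\subsetneq\fb\subsetneq R$, then $J(\fa)\subsetneq J(\fb)\subsetneq\nEl(R)$. The inclusion holds by Proposition~\ref{prop:J_ideal_obs}(iii), and it is strict because $J$ is a preimage under a bijection. This contradicts the maximality of $\fm$.

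\textbf{Step 3: conclude.} So $\fa$ is a maximal, hence prime, ideal of $R$, and Proposition~\ref{prop:J_prime} shows that $\fm=J(\fa)$ is $\circ_n$-prime.

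The main obstacle is Step 1: checking that the image of an $n$-ary ideal is additively closed. This is exactly where the presence of $z=1/(n+1)$ in every ideal, guaranteed by the new convention, is essential. Without $n+1$ invertible, an ideal could be empty, and $\Sigma_n$ would not be surjective, so the correspondence would break. I would carry out the padding computation with $z$ carefully for general $n$, since $\Sigma_n(\ast)$ has sign $-1$ and the padding has to produce both negation and sums.
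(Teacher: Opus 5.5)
Your proof is correct and follows essentially the same route as the paper. You identify \(\fm\) with \(J(\fa)\) via the norm-ideal correspondence, transfer maximality to \(\fa\subseteq R\), use that maximal ideals of \(R\) are prime, and pull primality back with Proposition~\ref{prop:J_prime}; your Step~1, showing directly that \(\Sigma_n(\fm)\) is a classical ideal by padding with \(z\), supplies a justification that the paper only asserts when it invokes the correspondence.
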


      \begin{proof}
        By the norm-ideal correspondence, every \(n\)-ary ideal of \(\nEl(R)\) is of the
        from \(J(\fb)\) for a unique ideal \(\fb\subseteq R\). Moreover \(J(\fb)\subseteq
        J(\fb')\) if and only if \(\fb\subseteq\fb'\). The maximality of \(J(\fm)\)
        therefore corresponds exactly to maximality of \(\fm\). Since every maximal ideal
        of a ring is prime, \(\fm\) is prime and thus, \(J(\fm)\) is prime as an
        \(n\)-ary ideal as well.
      \end{proof}

      \begin{Cor}
        Every ideal-field (Definition~\ref{def:ideal_field}) \(A\) is ideal-cancellative
        (Definition~\ref{def:ideal_cancel}).
      \end{Cor}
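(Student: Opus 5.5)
The plan is to combine Definitions~\ref{def:ideal_field} and \ref{def:ideal_cancel}, in the amended form of Note~\ref{nt:cancellative_field}, with Theorem~\ref{thm:max_implies_prime}. Let \(A\) be an ideal-field. By the amended definition there is a ring \(S\) with \(n+1\in S^{\times}\) and a \(\circ_n\)-maximal ideal \(\fm\subseteq\nEl(S)\) such that \(A\cong\nEl(S)/\fm\). Theorem~\ref{thm:max_implies_prime} says \(\fm\) is also \(\circ_n\)-prime. Taking \(A'=\nEl(S)\) and \(\p=\fm\) then shows directly that \(A\) is ideal-cancellative. In this reading the corollary is a one-line consequence, and I would present it that way first.

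If one also wants the conclusion in the concrete sense of Definition~\ref{def:cancellative}, I would add a second step using the norm-ideal correspondence.

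\begin{enumerate}
\item Write \(\fm=J(\fb)\) for a unique ideal \(\fb\subseteq S\). This is possible because, with \(n+1\) invertible, every \(n\)-ary ideal has this form, as used in the proof of Theorem~\ref{thm:max_implies_prime}.
\item Since \(J\) preserves and reflects inclusions, maximality of \(J(\fb)\) forces \(\fb\) to be a maximal ideal of \(S\).
\item Theorem~\ref{thm:quotient_iso} then gives \(A\cong\nEl(S)/J(\fb)\cong\nEl(S/\fb)\), where \(S/\fb\) is a field in which \(n+1\) is still invertible.
\item Theorem~\ref{thm:nary_field_char} shows \(\nEl(S/\fb)\) is an \(n\)-ary elliptic field, and Theorem~\ref{th:field_is_cancellative} (or Corollary~\ref{cor:cancel_invertible}, since a field is an integral domain) shows it is \(\circ_n\)-cancellative.
\end{enumerate}

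The proposition just before the \(\circ_n\)-inverse discussion, equating \(\circ_n\)-cancellativity with ideal-cancellativity, closes the loop between the two readings.

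The main obstacle is the bookkeeping around the standing convention that \(n+1\) is invertible and that ideals contain \(z\). The amended definition quantifies over rings \(S\) with \(n+1\in S^\times\), so I must check that the quotient \(S/\fb\) keeps \(n+1\) invertible, which holds because the image of a unit is a unit. I must also check that \(\fm\) is non-empty, so that the correspondence \(\fm=J(\fb)\) applies; this holds because \(z\in\fm\) under the convention. Once these points are verified, the argument reduces entirely to the classical fact that maximal ideals are prime.
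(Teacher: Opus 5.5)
Your first paragraph is exactly the paper's argument: under the amended definitions of Note~\ref{nt:cancellative_field} you have \(A\cong\nEl(S)/\fm\) with \(\fm\) maximal, Theorem~\ref{thm:max_implies_prime} makes \(\fm\) prime, and the same presentation witnesses ideal-cancellativity. Your second step, passing through \(J(\fb)\) and \(\nEl(S/\fb)\) to concrete \(\circ_n\)-cancellativity, is correct but not needed for the corollary as stated.
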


      \begin{Pro}
        An \(n\)-ary ring \(\nEl(R)\) is an \(n\)-ary elliptic field
        (Definition~\ref{def:nary_field}) if and only if \(\nEl(R)\) is an ideal field
        (Definition~\ref{def:ideal_field}).
      \end{Pro}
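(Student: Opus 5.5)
The plan is to route both directions through the base ring $R$, using the standing convention that $n+1\in R^\times$, and the two results that already translate the $n$-ary notions into classical ones: Theorem~\ref{thm:nary_field_char}, which says $\nEl(R)$ is an $n$-ary elliptic field if and only if $R$ is a field, and Note~\ref{nt:cancellative_field}, which fixes how "ideal-field" is read in this category. The claim then reduces to showing that $R$ is a field if and only if $\nEl(R)\cong\nEl(S)/\fm$ for some ring $S$ with $n+1\in S^\times$ and some $\circ_n$-maximal ideal $\fm\subseteq\nEl(S)$.

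For the forward direction, assume $\nEl(R)$ is an $n$-ary elliptic field. By Theorem~\ref{thm:nary_field_char}, $R$ is a field, so $(0)$ is a maximal ideal of $R$. By the correspondence used in the proof of Theorem~\ref{thm:max_implies_prime}, $J((0))=\{z\}$ is then a maximal $n$-ary ideal of $\nEl(R)$. Theorem~\ref{thm:quotient_iso} gives $\nEl(R)/J((0))\cong\nEl(R/(0))=\nEl(R)$, so we may take $S=R$ and $\fm=J((0))$. This is the same device as the proof that $\circ_n$-cancellativity is equivalent to ideal-cancellativity.

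For the converse, suppose $\nEl(R)\cong\nEl(S)/\fm$ with $\fm$ a maximal $n$-ary ideal. The norm-ideal correspondence (every $n$-ary ideal of $\nEl(S)$ is $J(\fb)$ for a unique ideal $\fb\subseteq S$, and inclusions are preserved and reflected) lets us write $\fm=J(\fm_0)$ with $\fm_0$ a maximal ideal of $S$. Theorem~\ref{thm:quotient_iso} then gives $\nEl(S)/J(\fm_0)\cong\nEl(S/\fm_0)$, and $S/\fm_0$ is a field. It remains to pass from $\nEl(R)\cong\nEl(S/\fm_0)$ to $R$ being a field. I would do this intrinsically rather than by recovering $R$ up to ring isomorphism. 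Being an $n$-ary elliptic field is defined purely through the $\circ_n$-monoid and its absorbing element, and an isomorphism of $n$-ary rings is in particular a monoid isomorphism carrying absorbing element to absorbing element. So $\nEl(R)$ is an $n$-ary elliptic field, and Theorem~\ref{thm:nary_field_char} finishes the argument.

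The step I expect to be the main obstacle is the identification $\fm=J(\fm_0)$. It depends on the surjectivity half of the norm-ideal correspondence in the invertible case, namely that every (non-empty) $n$-ary ideal of $\nEl(S)$ is of the form $J(\fb)$. The paper uses this but does not prove it separately. If it needs justification, the approach is: given an $n$-ary ideal $I$, set $\fb=\Sigma_n(I)$ and show it is an ideal of $S$. Closure under multiplication by elements of $S$ follows from (I2), since $\Sigma_n$ is a monoid isomorphism onto $(S,\cdot)$ by Lemma~\ref{lem:norm_iso}. Closure under addition follows from (I1) combined with $z\in I$ and the identity $\Sigma_n(x_1\ast\cdots\ast x_n)=-\sum_i\Sigma_n(x_i)$ computed in Proposition~\ref{prop:J_ideal}. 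Padding with copies of $z$ (which has norm $0$) gives sums of two elements, and $n$ copies of an element give $-n\,\Sigma_n(x)$, which together yield closure under negation.
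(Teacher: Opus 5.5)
Your proposal is correct and follows the paper's proof essentially step for step. For the forward direction you take $S=R$ and $\fm=J((0))$, using Theorem~\ref{thm:quotient_iso}; for the converse you write $\fm=J(\fm_0)$, use $\nEl(S)/J(\fm_0)\cong\nEl(S/\fm_0)$, transport the group-like binoid structure along the isomorphism, and finish with Theorem~\ref{thm:nary_field_char}. Your sketch that every non-empty $n$-ary ideal $I$ equals $J(\Sigma_n(I))$ fills in a step the paper only asserts, and it works: closure under negation follows directly from padding $x$ with $n-1$ copies of $z$ (or from (I2)), so your $-n\,\Sigma_n(x)$ remark is unnecessary.
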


      \begin{proof}
        Let \(\nEl(R)\) be an ideal field. Theorem~\ref{thm:nary_field_char} states that
        \(R\) must be a field and as such, \((0)\) is the unique maximal ideal of \(R\).
        This makes \(J((0))\) a maximal \(n\)-ary ideal of \(\nEl(R)\) by
        Theorem~\ref{thm:max_implies_prime} and thus \(\nEl(R) \cong \nEl(R)/J((0))\). We
        deduce that \(\nEl(R)\) is an ideal-field with \(A' = \nEl(R)\) and \(\fm =
        J((0))\).

        Suppose \(\nEl(R)\) is an ideal-field, meaning, \(\nEl(R)\cong A'/\fm\). By
        assumption (Note~\ref{nt:cancellative_field}), we have \(A' = \nEl(S)\) with
        \(n+1\in S^{\times}\). The \(n\)-ary maximal ideals of \(\nEl(S)\) are \(J(\fm)\)
        for maximal ideals \(\fm\subseteq S\), and \(\nEl(S)/J(\fm)\cong\nEl(S/\fm)\)
        with \(S/\fm\) a field. Since \(\nEl(R)\cong\nEl(S/\fm)\) and \(S/\fm\) is a
        field, it follows that \(R\) is a field (as \(n\)-ary ring isomorphism preserves
        the group-like binoid structure). Theorem~\ref{thm:nary_field_char} now gives the
        desired result.
      \end{proof}

\subsection{The localisation \(\nEl(\Z_{n+1})\)} \label{sec:localisation}

    Let \(\Z_{n+1} := S^{-1}\Z\), where \(S = \{(n+1)^m: m \geq 0\}\), be the
    localisation of \(\Z\) with \(n+1\). This is, of course, again a PID. Its prime
    elements are rational primes \(q\) for which \(q\nmid n+1\). The inclusion
    \(\Z\hookrightarrow\Z_{n+1}\) induces an injective \(n\)-ary ring homomorphism
    \(\nEl(\Z)\hookrightarrow\nEl(\Z_{n+1})\) which we wish to study in this short
    section. The key observation is the following proposition:

    As a consequence, we no longer have to worry about being able to lift classical
    primes to \(n\)-ary primes. This removes the main obstruction to unique
    factorisation in \(\circ_n\) primes and the class group becomes always trivial.
    This can be summed up as:

    \begin{Cor}
      The following are equivalent for an element \(p \in \nEl(\Z_{n+1})\):
      \begin{itemize}
        \item[(i)] \(p\) is \(\circ_n\)-irreducible;
        \item[(ii)] \(p\) is \(\circ_n\)-prime
        \item[(iii)] \(\Sigma_n(p)\) is a rational prime not dividing \(n+1\).
      \end{itemize}
      In particular, \(\mathrm{Cl}_n(\Z_{n+1}) = 1\) and every nonzero non-unit factors
      uniquely into \(\circ_n\)-primes. Moreover, there are infinitely many
      \(\circ_n\)-primes.
    \end{Cor}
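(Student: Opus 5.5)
The plan is to reduce everything to the classical ring \(\Z_{n+1}\) through the norm. Since \(n+1\in\Z_{n+1}^{\times}\), Lemma~\ref{lem:norm_iso} says that \(\Sigma_n:\nEl(\Z_{n+1})\to(\Z_{n+1},\cdot,1)\) is an isomorphism of monoids. It sends the \(\circ_n\)-unit \(0\) to \(1\) and the absorbing element \(z\) to \(0\) (Lemma~\ref{le:zero}). Every purely monoidal notion transports along this isomorphism: units, associates, divisibility, irreducibility, primality and principal ideals. So:
\begin{itemize}
  \item \(p\) is \(\circ_n\)-irreducible if and only if \(\Sigma_n(p)\) is irreducible in \(\Z_{n+1}\);
  \item \(p\) is \(\circ_n\)-prime if and only if \(\Sigma_n(p)\) is prime in \(\Z_{n+1}\).
\end{itemize}
The same conclusions follow from the general criteria of Proposition~\ref{prop:prime_general} and the irreducibility characterisation after it, because \(1+(n+1)\Z_{n+1}=\Z_{n+1}\). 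Here \enquote{nonzero} must be read as \(p\neq z\), so that \(\Sigma_n(p)\neq 0\).

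Next I would recall the arithmetic of \(\Z_{n+1}\). It is a PID, hence a UFD, so irreducibles and primes coincide; this gives (i)\(\Leftrightarrow\)(ii). Its units are \(\pm\prod_{\ell\mid n+1}\ell^{k_\ell}\) with \(k_\ell\in\Z\). Its prime elements are therefore exactly the associates of rational primes \(q\nmid n+1\).

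The main subtlety is (iii): as literally stated, \(\Sigma_n(p)\) need only be \emph{associate} to such a prime. I would phrase the equivalence up to \(\circ_n\)-associates, noting that \(\Sigma_n\) is surjective. Then every rational prime \(q\nmid n+1\) is hit exactly, by \(p=(1-q)/(n+1)\). Consequently each \(\circ_n\)-prime is \(\circ_n\)-associate to one whose norm is a rational prime not dividing \(n+1\), and conversely; this settles (ii)\(\Leftrightarrow\)(iii).

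For the consequences:
\begin{itemize}
  \item \emph{Unique factorisation.} Unique factorisation of a non-\(z\), non-unit \(a\) into \(\circ_n\)-primes is the pullback under \(\Sigma_n^{-1}\) of unique factorisation of \(\Sigma_n(a)\) in the UFD \(\Z_{n+1}\).
  \item \emph{Trivial class group.} Every ideal of \(\Z_{n+1}\) is principal, say \((m)\). By surjectivity of \(\Sigma_n\) we have \(J((m))=(\Sigma_n^{-1}(m))\), so every \(n\)-ary ideal is principal and \(\Cl_n(\Z_{n+1})=1\). This matches the definition of the class group as fractional ideals modulo principal ones.
  \item \emph{Infinitely many primes.} Infinitely many \(\circ_n\)-primes follows from Euclid's theorem: there are infinitely many rational primes \(q\nmid n+1\), and they are pairwise non-associate in \(\Z_{n+1}\). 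Their preimages \((1-q)/(n+1)\) are therefore pairwise non-associate \(\circ_n\)-primes.
\end{itemize}
Unlike Theorem~\ref{thm:main}, no Dirichlet input is needed, because the congruence obstruction \(q\equiv\pm1\pmod{n+1}\) disappears once \(n+1\) is inverted.
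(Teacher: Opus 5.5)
Your proposal is correct and follows the paper's implicit route. The paper states this corollary without a separate proof, as an immediate consequence of \(\Sigma_n:\nEl(\Z_{n+1})\to\Z_{n+1}\) being a surjective monoid isomorphism (Lemma~\ref{lem:norm_iso}), which transports everything from the PID \(\Z_{n+1}\). Your caveats are genuine corrections to the statement rather than gaps: (ii)\(\Rightarrow\)(iii) only holds up to \(\circ_n\)-associates (e.g.\ \(\Sigma_n(p)=-q\)), and \(z\) is literally \(\circ_n\)-prime but neither irreducible nor of prime norm. The one step you leave implicit for the class group, that every non-empty \(n\)-ary ideal is some \(J(\fa)\), is the norm-ideal correspondence the paper invokes in Theorem~\ref{thm:max_implies_prime}.
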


    The gap between \(\nEl(\Z_{n+1})\), where this is trivial, and \(\nEl(\Z)\), where
    it equals Dirichlet can be captured by the following:
    \begin{quote}
      A \(\circ_n\)-prime \(p\in\nEl(\Z_{n+1})\) lies in \(\Z\subseteq \Z_{n+1}\) if and
      only if \(\Sigma_n(p)\equiv 1\pmod{n+1}\).
    \end{quote}

    We can also phrase it as saying that Diriclets theorem for \(1+(n+1)\Z\) is
    equivalent to the statement that infinitely many \(\circ_n\)-primes of
    \(\nEl(\Z_{n+1})\) are integral. \\

    We sum our discussion up by writing:

    \begin{center}
      \renewcommand{\arraystretch}{1.4}
      \begin{tabular}{lcc}
        \hline
                                           & \(\nEl(\Z)\)                        & \(\nEl(\Z_{n+1})\) \\
        \hline
        \(\Sigma_n\) surjective            & No (\(\mathrm{Im}=1+(n+1)\Z\))      & Yes                \\
        Irred.\ \(=\) Prime                & Only for \(n\in\{2,3,5\}\)          & Always             \\
        Unique factorisation (elements)    & Only for \(n\in\{2,3,5\}\)          & Always             \\
        Class group                        & \((\Z/(n+1)\Z)^{\times}/\{\pm 1\}\) & Trivial            \\
        \(\infty\)-many \(\circ_n\)-primes & \(\Leftrightarrow\) Dirichlet       & Trivial (Euclid)   \\
        Primes correspond to               & \(q\equiv \pm 1\pmod{n+1}\)         & All \(q\nmid n+1\) \\
        \hline
      \end{tabular}
    \end{center}

\end{document}